\documentclass[11pt]{amsart}
\usepackage{fullpage,amssymb,amsmath,hyperref,color,enumitem,caption,comment}
\usepackage[noadjust]{cite}
\usepackage[abs]{overpic}
\pdfoutput=1

\newcommand{\bbA}{\mathbb{A}}
\newcommand{\bbC}{\mathbb{C}}

\newcommand{\bbN}{\mathbb{N}}
\newcommand{\bbP}{\mathbb{P}}
\newcommand{\bbQ}{\mathbb{Q}}

\newcommand{\bbZ}{\mathbb{Z}}

\newcommand{\calA}{\mathcal{A}}
\newcommand{\calB}{\mathcal{B}}

\newcommand{\calK}{\mathcal{K}}
\newcommand{\calL}{\mathcal{L}}
\newcommand{\calM}{\mathcal{M}}
\newcommand{\calO}{\mathcal{O}}

\newcommand{\calR}{\mathcal{R}}

\newcommand{\calX}{\mathcal{X}}
\newcommand{\calZ}{\mathcal{Z}}

\newcommand{\calKbar}{\overline{\calK}}

\renewcommand{\hbar}{\overline{h}}

\newcommand{\frakp}{\mathfrak{p}}
\newcommand{\frakq}{\mathfrak{q}}
\newcommand{\hhat}{\widehat{h}}

\newcommand{\Spec}{\operatorname{Spec}}

\newcommand{\Res}{\operatorname{Res}}

\newcommand{\disc}{\operatorname{disc}}

\renewcommand{\bar}{\overline}
\renewcommand{\tilde}{\widetilde}
\renewcommand{\phi}{\varphi}
\renewcommand{\epsilon}{\varepsilon}

\newtheorem{thm}{Theorem}[section]
\newtheorem{lem}[thm]{Lemma}
\newtheorem{prop}[thm]{Proposition}
\newtheorem{cor}[thm]{Corollary}

\theoremstyle{definition}
\newtheorem{ques}[thm]{Question}
\newtheorem{rem}[thm]{Remark}
\theoremstyle{remark}

\numberwithin{equation}{section}

\title{Preperiodic portraits for unicritical polynomials over a rational function field}
\author{John R. Doyle}
\address{Department of Mathematics \\
University of Rochester \\
Rochester, NY 14627} 
\email{john.doyle@rochester.edu}

\begin{document}

\begin{abstract}
Let $K$ be an algebraically closed field of characteristic zero, and let $\calK := K(t)$ be the rational function field over $K$. For each $d \ge 2$, we consider the unicritical polynomial $f_d(z) := z^d + t \in \calK[z]$, and we ask the following question: If we fix $\alpha \in \calK$ and integers $M \ge 0$, $N \ge 1$, and $d \ge 2$, does there exist a place $\frakp \in \Spec K[t]$ such that, \emph{modulo $\frakp$}, the point $\alpha$ enters into an $N$-cycle after precisely $M$ steps under iteration by $f_d$? We answer this question completely, concluding that the answer is generally affirmative and explicitly giving all counterexamples. This extends previous work by the author in the case that $\alpha$ is a constant point.
\end{abstract}

\keywords{Preperiodic points; $abc$-theorem; unicritical polynomials}

\subjclass[2010]{Primary 37P05; Secondary 37F10, 14H05}

\maketitle

\section{Introduction}\label{sec:intro}
Let $F$ be a field, and let $\phi(z) \in F(z)$ be a rational function, thought of as a self-map of $\bbP^1(F)$. For an integer $n \ge 0$, we denote by $\phi^n$ the $n$-fold composition of $\phi$; that is, $\phi^0$  is the identity map, and $\phi^n = \phi \circ \phi^{n-1}$ for each $n \ge 1$. We say that $\alpha \in \bbP^1(F)$ is \textbf{periodic} for $\phi$ if there exists an integer $N \ge 1$ for which $\phi^N(\alpha) = \alpha$; the minimal such $N$ is called the \textbf{period} of $\alpha$. More generally, we say that $\alpha$ is \textbf{preperiodic} if there exist integers $M \ge 0$ and $N \ge 1$ such that $f^M(\alpha)$ has period $N$; if $M$ is minimal, we say that $(M,N)$ is the \textbf{preperiodic portrait} (or simply \textbf{portrait}) of $\alpha$ under $\phi$. If $M \ge 1$, then we say that $\alpha$ is \textbf{strictly preperiodic}. The \textbf{orbit} of $\alpha$ under $\phi$ is the set
	\[
		\calO_\phi(\alpha) := \{\phi^n(\alpha) : n \in \bbZ_{\ge 0}\}.
	\]
Note that $\alpha$ is preperiodic for $\phi$ if and only if $\calO_\phi(\alpha)$ is finite. We say that a point is \textbf{wandering} if it is not preperiodic.

Let $\calM_F$ denote the set of places of $F$. (If $F$ is a function field, we require the places to be trivial on the constant subfield.) For a place $\frakp \in \calM_F$, let $k_\frakp$ denote the residue field at $\frakp$. Given a rational map $\phi$ and a place $\frakp$, one can consider the reduction of $\phi$ at $\frakp$: Write $\phi(z) = p(z)/q(z)$ with coprime $p,q \in F[z]$, normalized so that all coefficients are integral at $\frakp$, and at least one coefficient is a unit at $\frakp$. Then the \textbf{reduction} of $\phi$ at $\frakp$ is the map $\tilde{\phi}(z) \in k_\frakp(z)$ obtained by reducing the coefficients modulo $\frakp$. We say that $\phi$ has \textbf{good reduction} modulo $\frakp$ if $\deg \tilde{\phi} = \deg \phi$. If $\frakp$ is a place of good reduction for $\phi$, then we say that a point $\alpha \in \bbP^1(F)$ is \textbf{preperiodic for $\phi$ modulo $\frakp$} if the reduction $\tilde{\alpha} \in \bbP^1(k_\frakp)$ is preperiodic for the map $\tilde{\phi}$. We say that $\alpha$ has \textbf{(preperiodic) portrait $(M,N)$ for $\phi$ modulo $\frakp$} if $\tilde{\alpha}$ has preperiodic portrait $(M,N)$ for $\tilde{\phi}$.

If $\alpha$ is not preperiodic for $\phi$, it may still be true that $\alpha$ is preperiodic for $\phi$ modulo $\frakp$ at some place $\frakp$ of good reduction. For example, this will necessarily be true if $F$ has finite residue fields. We now consider the following more specific question regarding preperiodicity modulo places of $F$:
\begin{ques}\label{ques:main}
Let $\phi \in F(z)$. Fix $\alpha \in F$ and integers $M \ge 0$, $N \ge 1$. Does there exist a place $\frakp \in \calM_F$ of good reduction for $\phi$ such that $\alpha$ has preperiodic portrait $(M,N)$ for $\phi$ modulo $\frakp$?
\end{ques}
If the answer to Question~\ref{ques:main} is ``yes," we will say that $\alpha$ \textbf{realizes portrait $(M,N)$ for $\phi$}.

Question~\ref{ques:main} has been studied by multiple authors in the case that $F$ is a number field, dating back to related questions addressed by Bang \cite{bang:1886} and Zsigmondy \cite{zsigmondy:1892} in the late nineteenth century. Much more recently, Ingram and Silverman \cite{ingram/silverman:2009} conjectured that if $F$ is a number field and $\alpha \in F$ is a wandering point for $\phi$, then $\alpha$ realizes all but finitely many portraits for $\phi$. Faber and Granville \cite{faber/granville:2011} later gave counterexamples to this conjecture, noting that if $\phi(z) \in \bbQ(z)$ is totally ramified over all points of period $N$, then a given $\alpha \in \bbQ$ will \emph{fail} to realize portrait $(M,N)$ for all but finitely many $M$. Ghioca, Nguyen, and Tucker \cite{ghioca/nguyen/tucker:2015} subsequently pointed out that if $\phi$ is totally ramified over $\phi^M(\alpha)$ for some $M \ge 1$, then $\alpha$ cannot realize portrait $(M,N)$ for any $N \in \bbN$; their main result (\cite[Thm. 1.3]{ghioca/nguyen/tucker:2015}) is that these are the only obstructions to the analogue of the Ingram-Silverman conjecture in the setting where $F$ is the function field of a curve over an algebraically closed field of characteristic zero. They also claim that the appropriate modification of the Ingram-Silverman conjecture over number fields may be proven, under the assumption of the $abc$-conjecture, by adapting the methods of \cite{ghioca/nguyen/tucker:2015}. 

The purpose of the present article is to explicitly describe all exceptions to the result of Ghioca-Nguyen-Tucker in a natural special case. For the remainder of the paper, $K$ will be an algebraically closed field of characteristic zero, and $\calK = K(t)$ will be the rational function field over $K$. Places of $\calK$ correspond naturally to points on $\bbP^1(K)$, and the residue field at each place is isomorphic to $K$. For a point $c \in \bbP^1(K)$, we denote by $\frakp_c \in \calM_\calK$ the place corresponding to $c$.

We take our rational maps to be the \emph{unicritical polynomials}
	\[
		f_d(z) := z^d + t \in \calK[z]
	\]
of degree $d \ge 2$, which have good reduction away from $\frakp_\infty$. For each $c \in K$, we denote by $f_{d,c}$ the specialization of $f_d$ at $\frakp_c$; that is, $f_{d,c}(z) = z^d + c \in K[z]$.

Our main result fully answers Question~\ref{ques:main} with $F = \calK$ and $\phi = f_d$. For what follows, let
	\[
		\phi_1(z) := -\dfrac{t(z + 1)}{z - (t - 1)} \ \mbox{ and } \ \phi_2(z) := \dfrac{(t+1)(z - 1)}{z + t}.
	\]
\begin{thm}\label{thm:main}
Let $K$ be an algebraically closed field of characteristic zero, let $\calK := K(t)$ be the rational function field over $K$, and let $(\alpha,M,N,d) \in \calK \times \bbZ^3$ with $M \ge 0$, $N \ge 1$, and $d \ge 2$. Then there exists a place $\frakp \in \calM_\calK \setminus\{\frakp_\infty\} = \Spec K[t]$ such that $\alpha$ has preperiodic portrait $(M,N)$ under $f_d$ modulo $\frakp$ if and only if $(\alpha,M,N,d)$ does not satisfy one of the following conditions:
	\begin{itemize}
	\item $M = 1$ and $\alpha = 0$;
	\item $(M,N,d) = (0,2,2)$ and $\alpha = -1/2$;
	\item $(M,N,d) = (1,1,2)$ and $\alpha \in \calO_{\phi_1}(0) \cup \calO_{\phi_1}(\infty)$;
	\item $(M,N,d) = (1,2,2)$ and $\alpha \in \calO_{\phi_2}(0) \cup \calO_{\phi_2}(1/2) \cup \calO_{\phi_2}(\infty)$; or
	\item $(M,N,d) = (2,2,2)$ and $\alpha = \pm 1$.
	\end{itemize}
\end{thm}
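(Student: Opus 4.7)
The question reduces, via the dictionary $\frakp_c \leftrightarrow c \in K$, to finding $c \in K$ for which $\alpha(c) \in K$ has portrait exactly $(M, N)$ under the specialization $f_{d,c}(z) = z^d + c$. The natural gadget is the generalized dynatomic polynomial $\Phi_{M,N}^{f_d}(z) \in \calK[z]$ whose roots are the points of portrait exactly $(M, N)$ for $f_d$; substituting $z = \alpha(t)$ and clearing denominators yields $H(t) \in K[t]$ whose roots are the candidate specializations, and the proof naturally splits into verifying the listed exceptions (negative direction) and establishing realizability everywhere else (positive direction).

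For the \emph{negative} direction, each listed case requires a separate verification. The case $M = 1$, $\alpha = 0$ is an instance of the Ghioca-Nguyen-Tucker ramification obstruction: for any $c$, the equation $f_{d,c}^N(c) = c$ gives $f_{d,c}^{N-1}(c)^d = 0$, forcing $0$ to lie in the cycle of $c$ mod $\frakp_c$ and contradicting $M = 1$. For $(M,N,d) = (0,2,2)$ with $\alpha = -1/2$, a direct calculation yields $f_2^2(-1/2) + 1/2 = (t + 3/4)^2$, so the only candidate $c = -3/4$ makes $-1/2$ a fixed point rather than a $2$-periodic point. The remaining three exceptions, all with $d = 2$, involve orbits of specific base points under the Möbius transformations $\phi_1$ and $\phi_2$; in each case one checks that $\phi_i$ provides a rational parametrization of the locus of $\alpha$'s for which every candidate specialization collapses the portrait to a strictly smaller one.

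For the \emph{positive} direction, the strategy is a trichotomy on $H$: if $H \equiv 0$, then $\alpha$ is already $(M,N)$-preperiodic over $\calK$ and almost any place of good reduction works; if $H$ is non-zero and non-constant, it has roots in $K$ (algebraically closed), and one must check that at least one such root yields the exact portrait rather than a collapsed one; if $H$ is a non-zero constant, no specialization realizes $(M,N)$. Step one is to bound $\deg_t H$ from below using $\deg_t f_d^k(\alpha) \sim d^k \deg_t \alpha$ (for non-constant $\alpha$, by Mason-Stothers-type considerations) together with the explicit degree-in-$z$ formula for $\Phi_{M,N}^{f_d}$; this rules out constancy of $H$ outside a short, explicit list of candidate configurations. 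Step two is to scrutinize that short list, distinguishing configurations in which $H$ is genuinely a non-zero constant (the true exceptions) from those in which $H$ has a root giving the exact portrait.

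The principal obstacle is step two: merely showing $H$ is non-constant is insufficient, as the $(0,2,2)$ exception demonstrates — one must also exclude the scenario where every root of $H$ collapses the portrait. The maps $\phi_1$ and $\phi_2$ encode precisely the one-parameter families of $\alpha$'s along which such a collapse is forced by a coincidence between the $(M,N)$- and $(M',N')$-dynatomic curves in the $(t,z)$-plane for some $(M', N') < (M, N)$. Concretely, one parametrizes the intersections of pairs of small-depth dynatomic curves and reads off the resulting families of $\alpha$'s; the Möbius form of $\phi_1, \phi_2$ reflects the genus-zero nature of these intersections for the smallest cases. For $d \ge 3$ the degree inequalities become strict enough that no such coincidence can occur, so the core combinatorial/algebraic work is concentrated in $d = 2$ with small $M$ and $N$.
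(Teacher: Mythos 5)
Your proposal is a high-level outline rather than a proof: it correctly locates the main difficulty but does not supply the machinery that actually resolves it. Let me point to the concrete gaps.

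First, the trichotomy on $H$ is not the right frame. The case $H \equiv 0$ never occurs: by Corollary~\ref{cor:height}, $\hhat_{f_d}(\alpha) \ge 1/d > 0$ for every $\alpha \in \calK$, so $f_d$ has \emph{no} preperiodic points in $\calK$ whatsoever. More importantly, ``$H$ is non-constant, so it has a root'' gets you nowhere by itself; as you acknowledge, the heart of the matter is ruling out the scenario in which \emph{every} root of $H$ collapses to a smaller portrait. But your only answer to that is the vague instruction to ``parametrize intersections of small-depth dynatomic curves,'' which is not a method. The paper's actual mechanism is quantitative: it applies the Mason--Stothers $abc$-theorem not to bound $\deg_t H$, but to bound from below the \emph{number of distinct places} at which $f_d^N(\alpha)$ coincides with a conjugate $\zeta^k \alpha$ of $\alpha$ (the set $\calA$), and then shows this exceeds a bound on the number of ``bad'' places $\calB$ at which the portrait could collapse (using Lemma~\ref{lem:htPhiN}, which counts zeroes of $\Phi_n(\zeta^k\alpha,t)$ via the canonical height). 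This distinct-place counting is exactly what the $abc$-theorem buys over naive degree counting, and you never set it up. Your degree-based step one cannot distinguish a polynomial with many distinct roots from one with a single high-multiplicity root, so it cannot close the argument.

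Second, the $abc$-approach only covers $d\ge 3$ (Proposition~\ref{prop:main3}) and $d=2$, $N\ge 4$ (Proposition~\ref{prop:maind2N4}); your sketch silently assumes it works uniformly. For $d=2$, $N=3$ the paper needs a detailed valuation analysis on the genus-zero curve $\{\Phi_3 = 0\}$ (Lemma~\ref{lem:val2}), a M\"{o}bius normalization built from a full $3$-cycle, and a separate finite Magma computation for the residual heights $h(\alpha) \in \{1,2\}$. For $d=2$, $N\le 2$ the $abc$-theorem fails entirely and the paper switches to a completely different technique: a local property~($*$) characterizing failure to realize $(1,2)$, Lemmas~\ref{lem:phi(*)} and \ref{lem:phi-1(*)} showing $\phi_2^{\pm 1}$ preserve ($*$) and that $\phi_2$ increments height, and then an induction on $h(\alpha)$ to conclude every exceptional $\alpha$ lies in the $\phi_2$-orbits of $0$, $1/2$, $\infty$ (Proposition~\ref{prop:(1,2)}). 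None of that structure is present or even gestured at in your proposal beyond the assertion that ``$\phi_i$ provides a rational parametrization,'' which is the conclusion, not an argument. Similarly, the $M=0$ case in the paper runs through smoothness of the dynatomic curve (Theorem~\ref{thm:Y1(N)}) and the bifurcation counting of Lemma~\ref{lem:bifurcation}, neither of which appears in your outline. Finally, a small point: the paper never invokes a generalized dynatomic polynomial $\Phi_{M,N}$; it reduces $M \ge 2$ to $M = 1$ via Lemma~\ref{lem:reduction} and characterizes portrait $(1,N)$ via the preimage description in Lemma~\ref{lem:(1,N)}.
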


\begin{rem}
The families of counterexamples in the $(1,1,2)$ and $(1,2,2)$ cases were discovered experimentally, and it was unclear whether some dynamical properties of the maps $\phi_1$ and $\phi_2$ could explain their appearance. Tom Tucker later pointed out that $\phi_1$ (resp., $\phi_2$) fixes each of the points in $\calKbar$ of portrait $(1,1)$ (resp., $(1,2)$) for $f_2$ and preserves vanishing at $\frakp_0$ (resp. $\frakp_{-1}$), the unique place at which the totally ramified point $0$ is fixed (resp., has period two) for $f_{2,c}$. Finally, we note that these exceptions have arbitrarily large height: for each $k \ge 0$, the points $\phi_1^k(0)$ and $\phi_1^k(\infty)$ have height $k$, as do the points $\phi_2^k(0)$, $\phi_2^k(1/2)$, and $\phi_2^k(\infty)$ --- see Propositions~\ref{prop:maind2N2} and \ref{prop:maind2N1}.
\end{rem}
As mentioned above, Ghioca, Nguyen, and Tucker consider Question~\ref{ques:main} for general rational maps and for function fields of arbitrary curves. Their main result \cite[Thm. 1.3]{ghioca/nguyen/tucker:2015}, when applied to the case of unicritical polynomials over $\calK$, says the following: For a fixed $d \ge 2$, if $(\alpha,M) \ne (0,1)$, and if $(M,N)$ avoids an effectively computable finite subset $\calZ(d) \subseteq \bbZ_{\ge 0} \times \bbN$, then every $\alpha \in \calK$ realizes portrait $(M,N)$ for $f_d$. Theorem~\ref{thm:main} implies that
	\[
	\calZ(d) =
		\begin{cases}
		\{(0,2),(1,1),(1,2),(2,2)\}, &\mbox{ if } d = 2;\\
		\emptyset, &\mbox{ if } d \ge 3.
		\end{cases}
	\]
Moreover, for each $(M,N) \in \calZ(2)$, Theorem~\ref{thm:main} explicitly gives all $\alpha \in \calK$ which do not realize portrait $(M,N)$ for $f_2$.

Theorem~\ref{thm:main} may also be viewed as a natural extension of a previous result of the author:

\begin{thm}[{\cite[Thm. 1.3]{doyle:portraits}}]\label{thm:old}
Let $K$ be as before, and let $(\alpha,M,N,d) \in K \times \bbZ^3$ with $M \ge 0$, $N \ge 1$, and $d \ge 2$. There exists $c \in K$ for which $\alpha$ has portrait $(M,N)$ under $f_{d,c}$ if and only if
	\[
	(\alpha,M) \ne (0,1) \mbox{ and } (\alpha,M,N,d) \not \in \left\{\left(-\frac{1}{2},0,2,2\right), \left(\frac{1}{2},1,2,2\right), \left(\pm 1, 2,2,2\right)\right\}.
	\]
\end{thm}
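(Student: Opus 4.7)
The plan is to encode the condition \emph{``$\alpha$ has portrait $(M,N)$ under $f_{d,c}$''} as the vanishing of an explicit polynomial in the single variable $c$, and then to show that this polynomial has a root in $K$ (equivalently, is nonconstant, since $K$ is algebraically closed) unless $(\alpha, M, N, d)$ appears on the exceptional list.

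Concretely, working in the universal family $f_{d,c}(z) = z^d + c$ over $\Spec K[c,z]$, I would build the generalized dynatomic polynomial $\Phi^*_{M,N,d}(c,z) \in K[c,z]$ by setting
\[
\Phi_{N,d}(c,z) := \prod_{N' \mid N}\bigl(f_{d,c}^{N'}(z) - z\bigr)^{\mu(N/N')}, \qquad \Phi^*_{M,N,d}(c,z) := \frac{\Phi_{N,d}\bigl(c,\, f_{d,c}^M(z)\bigr)}{\Phi_{N,d}\bigl(c,\, f_{d,c}^{M-1}(z)\bigr)},
\]
with the usual convention when $M=0$. Standard dynatomic theory ensures that both are polynomials and that the vanishing locus of $\Phi^*_{M,N,d}$ cuts out pairs $(c,z)$ for which $z$ has portrait $(M,N)$ under $f_{d,c}$. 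Specializing $z = \alpha$ yields $P_\alpha(c) := \Phi^*_{M,N,d}(c,\alpha) \in K[c]$, whose $K$-roots are exactly the parameters $c$ at which $\alpha$ has portrait $(M,N)$. The whole problem thus reduces to: when is $\deg_c P_\alpha \ge 1$?

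Using the elementary fact that $f_{d,c}^n(z)$ has bidegree $(d^{n-1}, d^n)$ in $(c,z)$ for $n \ge 1$, I would compute explicit formulas for the bidegree of $\Phi^*_{M,N,d}(c,z)$ and for the leading (in $c$) coefficient viewed as a polynomial in $z$. The substitution $z = \alpha$ preserves the $c$-degree as long as this leading coefficient does not vanish at $\alpha$, and a crude comparison with lower-portrait polynomials handles all cases with $d \ge 3$ or with $(M,N)$ sufficiently large. For each exception on the list I would verify negatively by direct computation. The most conceptual is $(\alpha, M) = (0, 1)$: because the critical value $c = f_{d,c}(0)$ has the critical point $0$ as its unique $f_{d,c}$-preimage, any cycle through $c$ must pass through $0$, so $c$ is periodic iff $0$ itself is periodic — ruling out any portrait $(1,N)$ for $\alpha = 0$. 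The four remaining exceptions, all at $d=2$ and small $M, N$, reduce to solving a handful of explicit polynomial equations in $c$ and checking that every candidate root collapses to a strictly smaller portrait; for example, $(\alpha, M, N, d) = (-1/2, 0, 2, 2)$ gives $f_{2,c}^2(-1/2) + 1/2 = (c + 3/4)^2$, and the unique candidate $c = -3/4$ makes $-1/2$ a fixed point rather than a $2$-periodic point.

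The main obstacle is combining the degree bookkeeping with enough control over cancellation in $P_\alpha(c)$ to rule out spurious exceptions: outside the listed quadruples, one has to show that the specialization does not collapse $P_\alpha$ to a nonzero constant. This is where the unicritical hypothesis really helps, since it makes the post-critical structure of $f_{d,c}$ rigid enough to give a clean dichotomy between the generic positive-degree case and the five tabulated exceptions. I expect the analysis to split naturally into $d \ge 3$ (where the degree bounds force $P_\alpha$ to be nonconstant outright) and $d = 2$ with small $(M,N)$ (where the exceptions concentrate and must be dispatched by explicit, low-degree resultant calculations).
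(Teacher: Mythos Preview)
This theorem is not proved in the present paper; it is quoted from the author's earlier work, and the paper only remarks that the original argument ``almost exclusively used the geometry of certain dynamical modular curves associated to the maps $f_d$.'' Your generalized-dynatomic approach is exactly in that spirit, since the curves $\{\Phi^*_{M,N,d}(c,z) = 0\}$ \emph{are} those modular curves, so at the level of strategy there is no real difference to report.

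There is, however, a genuine gap in your sketch. You assert that the $K$-roots of $P_\alpha(c) = \Phi^*_{M,N,d}(c,\alpha)$ are \emph{exactly} the parameters $c$ at which $\alpha$ has portrait $(M,N)$. This is not true in general: a root of $\Phi^*_{M,N,d}$ only guarantees a \emph{formal} portrait $(M,N)$, and there are ``collapsing'' roots at which $f_{d,c}^M(\alpha)$ actually has period a proper divisor of $N$, or at which $\alpha$ enters the cycle before step $M$. (The present paper confronts precisely this phenomenon in its own $M=0$ argument; see Lemma~\ref{lem:bifurcation} and Proposition~\ref{prop:periodic}, where one must check that the bifurcation roots are simple and too few to exhaust all zeroes of $\Phi_N(\alpha,t)$.) Consequently, showing $\deg_c P_\alpha \ge 1$ does not finish the positive direction: you must additionally bound the number of collapsing roots, with multiplicity, and show that this count is strictly smaller than $\deg_c P_\alpha$. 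You gesture at this with ``a crude comparison with lower-portrait polynomials,'' but that comparison is where the substance lies, and you have not indicated how it goes through --- in particular for $d=2$ and small $(M,N)$, where the margin between $\deg_c P_\alpha$ and the number of possible collapses is narrow. Your handling of the listed exceptions on the negative side is fine; it is the affirmative direction that remains incomplete.
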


This is precisely the case of Theorem~\ref{thm:main} in which $\alpha$ lies in the constant subfield $K$. The proof of Theorem~\ref{thm:old} almost exclusively used the geometry of certain dynamical modular curves associated to the maps $f_d$, whereas the proof of Theorem~\ref{thm:main} requires Diophantine methods much like those used in \cite{ghioca/nguyen/tucker:2015}. In particular, the argument for the $d \ge 3$ case of Theorem~\ref{thm:main} provides a completely different proof of the $d \ge 3$ case of Theorem~\ref{thm:old} --- except for the case $M = 0$, for which we simply refer to Theorem~\ref{thm:old} for constant points. The same is true for $d = 2$, except for the cases where $M = 1$ and $N \le 3$, where the Diophantine methods are insufficient for constant points.

We now give a brief overview of the article. In \textsection \ref{sec:prelim}, we collect the main tools required for the proof of the main theorem. In \textsection \ref{sec:reduction}, we prove the $M = 0$ case of Theorem~\ref{thm:main}, and we then show that the problem for $M \ge 1$ may essentially be reduced to $M = 1$.

We prove the general case ($d \ge 3$) of Theorem~\ref{thm:main} in \textsection \ref{sec:deg3}. Focusing on the situation with $M = 1$, we apply the $abc$-theorem for function fields due to Mason and Stothers \cite{mason:1984, stothers:1981} to get a lower bound on the number of places at which $f(\alpha)$ and $f^{N+1}(\alpha)$ agree; we then show that this bound must be greater than the number of places at which either $\alpha$ is periodic or $f(\alpha)$ has period strictly less than $N$, so there must be some place at which $\alpha$ has portrait $(1,N)$. The arguments in this case are quite similar to those used in \cite{ghioca/nguyen/tucker:2015}, though we make modifications based on the specific nature of our maps $f_d$ in order to obtain sufficiently nice bounds.

The case $d = 2$ must be handled separately; this case is discussed in \textsection \ref{sec:deg2}. A technique similar to that used for $d \ge 3$ is used when $N \ge 4$. While this particular method is insufficient when $N = 3$, we are able to prove the result in this case by applying the $abc$-theorem together with properties of the period-three \emph{dynatomic polynomial} associated to the map $z^2 + t$. Unfortunately, the $abc$-theorem can no longer be applied when $N = 1$ and $N = 2$, so we handle these cases with completely different techniques, again appealing to the appropriate dynatomic polynomials. Theorem~\ref{thm:main} is then proven by combining Proposition~\ref{prop:main3} (for the case $d \ge 3$) with Propositions~\ref{prop:maind2N4}, \ref{prop:maind2N3}, \ref{prop:maind2N2}, and \ref{prop:maind2N1} (for $d = 2$ and $N \ge 4$, $N = 3$, $N = 2$, and $N = 1$, respectively).

\subsection*{Acknowledgments}
I would like to thank Tom Tucker for introducing me to this problem, as well as for a number of very helpful discussions over the course of writing this article.

\section{Preliminaries}\label{sec:prelim}

\subsection{Valuations and heights}
Let $\calL$ be a finite extension of $\calK$, which corresponds to a finite morphism of curves $\calX_\calL \to \calX_\calK \cong \bbP^1_K$. For a place $\frakp \in \calM_\calK$, we denote by $\calM_{\calL,\frakp}$ the set of places of $\calL$ that restrict to $\frakp$. Associated to each place $\frakq \in \calM_\calL$ is a valuation $v_\frakq$ and its corresponding absolute value $|\cdot|_\frakq = e^{-v_\frakq( \cdot )}$. When $\calL = \calK$, so that places correspond to points on $\bbP^1(K)$, we abuse notation and write $v_c$, $| \cdot |_c$, and $\calM_{\calL,c}$ for $v_{\frakp_c}$, $|\cdot|_{\frakp_c}$, and $\calM_{\calL,\frakp_c}$, respectively.

We normalize the valuations on $\calL$ so that $v_\frakq(\calL^\times) = \bbZ$; equivalently, if $\pi_\frakq$ is a uniformizer at $\frakq$, then $v(\pi_\frakq) = 1$. Thus, if $\frakp$ is the restriction of $\frakq$ to $\calK$, and if $\alpha \in \calK$, then $v_\frakq(\alpha) = e_{\frakq/\frakp} \cdot v_\frakp(\alpha)$, where $e_{\frakq/\frakp}$ is the ramification degree of $\frakq$ over $\frakp$. This normalization of the valuations also ensures that the product formula holds: For all $\alpha \in \calL^\times$, we have
	\[
	\prod_{\frakq \in \calM_\calL} |\alpha|_\frakq = 1,
	\mbox{ or equivalently, }
	\sum_{\frakq \in \calM_\calL} v_\frakq(\alpha) = 0.
	\]	
For each $\alpha \in \calL$, set
	\[
		h_\calL(\alpha) = -\sum_{\frakq \in \calM_\calL} \min\{v_\frakq(\alpha),0\}
			= -\sum_{\substack{\frakq \in \calM_\calL\\v_\frakq(\alpha) < 0}} v_\frakq(\alpha).
	\]
By the product formula, this is equivalent (when $\alpha \ne 0$) to defining
	\[
		h_\calL(\alpha) = \sum_{\frakq \in \calM_\calL} \max\{v_\frakq(\alpha),0\} = \sum_{\substack{\frakq \in \calM_\calL\\v_\frakq(\alpha) > 0}} v_\frakq(\alpha).
	\]
If we consider $\alpha \in \calL$ as a rational map $\calX_\calL \to \bbP^1$, then $h_\calL(\alpha)$ is simply the degree of the map. If $\calL'$ is a finite extension of $\calL$, then $h_{\calL'}(\alpha) = [\calL':\calL]h_\calL(\alpha)$ for all $\alpha \in \calL$. This allows us to give a well-defined \textbf{(absolute) height function} on all of $\calKbar$, given by
	\[
		h(\alpha) := \frac{1}{[\calL : \calK]} \cdot h_\calL(\alpha)
	\]
for any finite extension $\calL/\calK$ containing $\alpha$. Given a rational map $\phi(z) \in \calK(z)$ of degree $d \ge 2$, we also define the \textbf{canonical height} associated to $\phi$:
	\[
		\hhat_\phi(\alpha) := \lim_{n \to \infty} \frac{1}{d^n} h(\phi^n(\alpha)).
	\]
That this is well-defined follows from the fact that $h(\phi(\alpha)) = dh(\alpha) + O(1)$, where the implied constant depends only on $\phi$; see \cite[\textsection 3.2]{silverman:2007}. Note that $\hhat_\phi(\phi(\alpha)) = d\hhat_\phi(\alpha)$ for all $\alpha \in \calKbar$.

We now record a basic height identity for elements of the orbit of a point $\alpha \in \calK$.

\begin{lem}\label{lem:hhat_ineq}
Let $\alpha \in \calK$, and let $d \ge 2$. Then for each $n \ge 1$, the poles of $f_d^n(\alpha)$ are precisely $\frakp_\infty$ and the poles of $\alpha$. Moreover,
	\begin{enumerate}
	\item if $\frakp$ is a finite pole of $\alpha$, then $v_\frakp(f_d^n(\alpha)) = d^nv_\frakp(\alpha)$;
	\item $v_\infty(f_d^n(\alpha)) = d^n \cdot
		\begin{cases}
		v_\infty(\alpha), &\mbox{ if } v_\infty(\alpha) < 0;\\
		-1/d, &\mbox{ if } v_\infty(\alpha) \ge 0.
		\end{cases}$
	\end{enumerate}
Therefore $h(f_d^n(\alpha)) = d^n \cdot 
	\begin{cases}
	h(\alpha), &\mbox{ if } v_\infty(\alpha) < 0;\\
	h(\alpha) + 1/d, &\mbox{ if } v_\infty(\alpha) \ge 0.
	\end{cases}$
\end{lem}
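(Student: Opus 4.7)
The plan is to prove the lemma by induction on $n$, with the base case $n = 1$ doing essentially all of the work; the inductive step then just recursively applies the $n=1$ analysis to $f_d^n(\alpha)$ in place of $\alpha$. The fundamental observation driving everything is the placement of the poles of $t \in \calK$: we have $v_\frakp(t) \ge 0$ for every finite place $\frakp$, while $v_\infty(t) = -1$. So the non-Archimedean inequality will let us read off the valuation of $\alpha^d + t$ at each place by comparing $v_\frakp(\alpha^d) = d\, v_\frakp(\alpha)$ against $v_\frakp(t)$.

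Concretely, for $n=1$ I would split into three cases for a place $\frakp \in \calM_\calK$. If $\frakp$ is a finite pole of $\alpha$, then $v_\frakp(\alpha^d) = d\, v_\frakp(\alpha) < 0 \le v_\frakp(t)$, so $v_\frakp(f_d(\alpha)) = d\, v_\frakp(\alpha)$. If $\frakp$ is a finite non-pole of $\alpha$, then both $v_\frakp(\alpha^d) \ge 0$ and $v_\frakp(t) \ge 0$, so $\frakp$ is not a pole of $f_d(\alpha)$. For $\frakp = \frakp_\infty$, the assumption $v_\infty(\alpha) < 0$ forces $v_\infty(\alpha) \le -1$, whence $v_\infty(\alpha^d) \le -d \le -2 < -1 = v_\infty(t)$, so $v_\infty(f_d(\alpha)) = d\, v_\infty(\alpha)$; whereas if $v_\infty(\alpha) \ge 0$, then $v_\infty(\alpha^d) \ge 0 > v_\infty(t)$, so $v_\infty(f_d(\alpha)) = -1 = d \cdot (-1/d)$. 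This matches both assertions (1) and (2) when $n=1$.

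For the inductive step, I would apply the $n=1$ analysis to $\beta := f_d^n(\alpha)$. By the inductive hypothesis, the finite poles of $\beta$ are exactly the finite poles of $\alpha$ (with valuation $d^n v_\frakp(\alpha)$), and $v_\infty(\beta)$ equals either $d^n v_\infty(\alpha)$ or $-d^{n-1}$, both of which are negative since $n \ge 1$ and $d \ge 2$. Plugging these into the base case gives $v_\frakp(f_d^{n+1}(\alpha)) = d \cdot d^n v_\frakp(\alpha)$ at each finite pole of $\alpha$, and $v_\infty(f_d^{n+1}(\alpha)) = d \cdot v_\infty(\beta)$ (since $\beta$ is already infinite at $\infty$, the case that uses $v_\infty(\cdot) < 0$ applies), which preserves the stated form.

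The final height identity then drops out by summing. In both cases the poles of $f_d^n(\alpha)$ are the finite poles of $\alpha$ together with $\frakp_\infty$, and
\[
h(f_d^n(\alpha)) = -v_\infty(f_d^n(\alpha)) - \sum_{\frakp \text{ finite pole of }\alpha} v_\frakp(f_d^n(\alpha)).
\]
When $v_\infty(\alpha) < 0$, this is $d^n(-v_\infty(\alpha) - \sum v_\frakp(\alpha)) = d^n h(\alpha)$. When $v_\infty(\alpha) \ge 0$, it is $d^{n-1} + d^n \sum_{\frakp \text{ finite}} (-v_\frakp(\alpha)) = d^n(h(\alpha) + 1/d)$, as claimed. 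I do not anticipate any real obstacle here; the only thing requiring care is the bookkeeping at $\frakp_\infty$, since the pole of $t$ there is what produces the extra $1/d$ term in the $v_\infty(\alpha) \ge 0$ case and also forces the inequality $d\, v_\infty(\alpha) < v_\infty(t)$ to use $d \ge 2$ in the base case.
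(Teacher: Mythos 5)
Your proof is correct and takes essentially the same approach as the paper's: an induction on $n$ whose $n=1$ base case follows from comparing $v_\frakp(\alpha^d)$ against $v_\frakp(t)$ via the ultrametric inequality at each place (with the inequality strict at $\frakp_\infty$ because $d \ge 2$), and whose inductive step applies the base case to $f_d^n(\alpha)$ using that $v_\infty(f_d^n(\alpha))$ is negative. The concluding height identity is obtained by the same termwise summation over $\frakp_\infty$ and the finite poles of $\alpha$.
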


\begin{proof}
Since $f_d^n(z)$ is a polynomial in $z$ and $t$, every pole of $f_d^n(\alpha)$ must be equal to $\frakp_\infty$ or a pole of $\alpha$. That the poles of $f_d^n(\alpha)$ are \emph{precisely} $\frakp_\infty$ and the poles of $\alpha$ then follows from parts (A) and (B), which we now prove by induction on $n$.

For $n = 1$, we have $f_d(\alpha) = \alpha^d + t$, so part (A) follows immediately from the ultrametric inequality. Furthermore, since $v_\infty(\alpha^d) \ne -1 = v_\infty(t)$, we have $v_\infty(f_d(\alpha)) = \min\{dv_\infty(\alpha),-1\}$.

Now suppose $n \ge 2$. First, let $\frakp$ be a finite pole of $\alpha$. By the induction hypothesis, $\frakp$ is a pole of $f_d^{n-1}(\alpha)$ of order $d^{n-1}v_\frakp(\alpha)$; applying the $n = 1$ case with $\alpha$ replaced by $f_d^{n-1}(\alpha)$ yields (A). We now consider $\frakp = \frakp_\infty$, in which case the induction hypothesis tells us that
	\[
		v_\infty(f_d^{n-1}(\alpha)) = d^{n-1} \cdot
			\begin{cases}
			v_\infty(\alpha), &\mbox{ if } v_\infty(\alpha) < 0\\
			-1/d, &\mbox{ if } v_\infty(\alpha) \ge 0.
			\end{cases}
	\]
Since this quantity is necessarily negative, the $n = 1$ case implies $v_\infty(f_d^n(\alpha)) = d v_\infty(f_d^{n-1}(\alpha))$, which gives us (B).

Finally, we note that for all $n \ge 1$,
	\begin{align*}
	h(f_d^n(\alpha)) = -\sum_{v_\frakp(f_d^n(\alpha)) < 0} v_\frakp(f_d^n(\alpha))
		&= -\sum_{\substack{v_\frakp(\alpha)<0\\\frakp \ne \frakp_\infty}} d^nv_\frakp(\alpha) - d^n \cdot
			\begin{cases}
			v_\infty(\alpha), &\mbox{ if } v_\infty(\alpha) < 0\\
			-1/d, &\mbox{ if } v_\infty(\alpha) \ge 0
			\end{cases}\\
		&= d^n \cdot \left(-\sum_{v_\frakp(\alpha) < 0} v_\frakp(\alpha) + 
			\begin{cases}
			0, &\mbox{ if } v_\infty(\alpha) < 0\\
			1/d, &\mbox{ if } v_\infty(\alpha) \ge 0
			\end{cases}
			\right)\\
		&= d^n \cdot
			\begin{cases}
				h(\alpha), &\mbox{ if } v_\infty(\alpha) < 0\\
				h(\alpha) + 1/d, &\mbox{ if } v_\infty(\alpha) \ge 0.
			\end{cases}
	\end{align*}
\end{proof}

The following description of the canonical height for points in $\calK$ now follows immediately from the definition.

\begin{cor}\label{cor:height}
Let $\alpha \in \calK$ and $d \ge 2$. Then
	\[
		\hhat_{f_d}(\alpha) = 
			\begin{cases}
			h(\alpha), &\mbox{ if } v_\infty(\alpha) < 0\\
			h(\alpha) + 1/d, &\mbox{ if } v_\infty(\alpha) \ge 0.
			\end{cases}
	\]
Thus, for all $n \ge 1$, we have $h(f_d^n(\alpha)) = d^n \hhat_{f_d}(\alpha)$.
\end{cor}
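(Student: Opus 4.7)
The plan is extremely short, because Corollary~\ref{cor:height} is a direct consequence of Lemma~\ref{lem:hhat_ineq}. Recall that by definition
\[
\hhat_{f_d}(\alpha) = \lim_{n \to \infty} \frac{1}{d^n} h(f_d^n(\alpha)).
\]
The key observation is that Lemma~\ref{lem:hhat_ineq} already gives a \emph{closed form} (not just an asymptotic) for $h(f_d^n(\alpha))$, namely $d^n h(\alpha)$ when $v_\infty(\alpha) < 0$ and $d^n(h(\alpha) + 1/d)$ when $v_\infty(\alpha) \ge 0$. Dividing by $d^n$, the sequence defining $\hhat_{f_d}(\alpha)$ is therefore \emph{constant} for all $n \ge 1$, so the limit equals that constant value. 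This yields the first displayed formula.

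For the second assertion, we rewrite the equality in Lemma~\ref{lem:hhat_ineq} as
\[
h(f_d^n(\alpha)) = d^n \cdot
\begin{cases}
h(\alpha), &\text{if } v_\infty(\alpha) < 0,\\
h(\alpha) + 1/d, &\text{if } v_\infty(\alpha) \ge 0,
\end{cases}
\]
and substitute the newly obtained expression for $\hhat_{f_d}(\alpha)$, giving $h(f_d^n(\alpha)) = d^n \hhat_{f_d}(\alpha)$.

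There is no real obstacle here: all the work was already done in Lemma~\ref{lem:hhat_ineq}. The only thing to verify is that the limit definition of $\hhat_{f_d}$ is genuinely well-defined (which was noted in the paragraph preceding the corollary, citing \cite[\textsection 3.2]{silverman:2007}), after which the corollary is just unpacking the lemma.
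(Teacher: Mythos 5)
Your proposal is correct and matches the paper's argument exactly: the paper notes the corollary ``follows immediately from the definition,'' i.e.\ from Lemma~\ref{lem:hhat_ineq} the sequence $d^{-n}h(f_d^n(\alpha))$ is constant for $n \ge 1$, so the limit defining $\hhat_{f_d}(\alpha)$ is that constant, and the final identity follows by substitution.
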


\subsection{Dynatomic polynomials for $f_d$}\label{sec:dyn} Throughout the article, we will require certain properties of the dynatomic polynomials for the maps $f_d(z) = z^d + t$. Suppose $x,c \in K$ are such that $x$ has period $N$ for $f_{d,c}(z) = z^d + c$. Then $(x,c)$ is a solution to the equation $f_{d,c}^N(x) - x = 0$. However, this equation is also satisfied whenever $x$ has period dividing $N$ for $f_{d,c}$. We therefore define the $N$th \textbf{dynatomic polynomial} for $f_d$ to be the polynomial
	\[
		\Phi_N(z,t) := \prod_{n \mid N} (f_d^n(z) - z)^{\mu(N/n)} \in \bbZ[z,t],
	\]
where $\mu$ is the M\"{o}bius function. (To ease notation, we omit the dependence on $d$.) The dynatomic polynomials give a natural factorization $f_d^N(z) - z = \prod_{n\mid N} \Phi_n(z,t)$. If $x$ has period $N$ for $f_{d,c}$, then $\Phi_N(x,c) = 0$, and for each $N \ge 1$ the converse is true for all but finitely many pairs $(x,c)$. That $\Phi_N(z,t)$ is indeed a polynomial is shown in \cite[Thm. 3.1]{morton/silverman:1995}; see also \cite[Thm. 4.5]{silverman:2007}. For each $N \ge 1$, we set 
	\[
		D(N) := \deg_z \Phi_N(z,t) = \sum_{n \mid N} \mu(N/n)d^n.
	\]
It is not difficult to verify that $\Phi_N(z,t)$ is monic in both $z$ and $t$, that $\deg_t \Phi_N(z,t) = D(N)/d$, and that
	\[
	\Phi_N(z,t) = z^{D(N)} + \mbox{(terms of lower total degree)}.
	\]
In particular, this implies that if $\frakp \in \calM_\calK$ is a pole of $\alpha \in \calK$, or if $\frakp = \frakp_\infty$, then
	\begin{equation}\label{eq:poles}
	v_\frakp(\Phi_N(\alpha,t)) = \min\left\{D(N)v_\frakp(\alpha), \frac{D(N)}{d}v_\frakp(t)\right\} < 0.
	\end{equation}
	
\begin{lem}\label{lem:htPhiN}
Let $\alpha \in \calK$ and $N \ge 1$. Then $v_\infty(\Phi_N(\alpha,t)) < 0$ and
	\[
		h(\Phi_N(\alpha,t)) = D(N) \cdot \hhat_{f_d}(\alpha).
	\]
In particular, $\Phi_N(\alpha,t)$ vanishes at precisely $D(N) \cdot \hhat_{f_d}(\alpha)$ \emph{finite} places, counted with multiplicity.
\end{lem}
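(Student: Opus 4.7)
The plan is to compute $v_\frakp(\Phi_N(\alpha,t))$ at every place where it is negative, sum up those valuations to obtain the height, and then match the result against Corollary~\ref{cor:height}.

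First, I would observe that since $\Phi_N(z,t) \in K[z,t]$, the only places $\frakp$ at which $v_\frakp(\Phi_N(\alpha,t))$ can possibly be negative are $\frakp_\infty$ and the finite poles of $\alpha$; equation~\eqref{eq:poles} tells us that at each such place the valuation is in fact negative, and gives its exact value. At $\frakp_\infty$ we have $v_\infty(t) = -1$, so~\eqref{eq:poles} yields
\[
v_\infty(\Phi_N(\alpha,t)) = D(N)\cdot \min\!\left\{v_\infty(\alpha),\,-\tfrac{1}{d}\right\} < 0,
\]
which is the first claim. At each finite pole $\frakp$ of $\alpha$ we have $v_\frakp(t) \ge 0$, so the minimum in~\eqref{eq:poles} is achieved by the first term, giving $v_\frakp(\Phi_N(\alpha,t)) = D(N)v_\frakp(\alpha)$.

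Next, I would apply the defining formula $h(\beta) = -\sum_{\frakp: v_\frakp(\beta)<0}v_\frakp(\beta)$ with $\beta = \Phi_N(\alpha,t)$ and plug in the valuations above to get
\[
h(\Phi_N(\alpha,t)) = D(N)\left(-\min\!\left\{v_\infty(\alpha),\,-\tfrac{1}{d}\right\} - \sum_{\substack{\frakp\ne\frakp_\infty\\ v_\frakp(\alpha)<0}}v_\frakp(\alpha)\right).
\]
A short case split on whether $v_\infty(\alpha) \ge 0$ or $v_\infty(\alpha) < 0$ then shows the parenthesized quantity equals $h(\alpha) + 1/d$ in the first case and $h(\alpha)$ in the second. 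By Corollary~\ref{cor:height}, this is exactly $\hhat_{f_d}(\alpha)$ in either case, so $h(\Phi_N(\alpha,t)) = D(N)\hhat_{f_d}(\alpha)$.

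For the final assertion, I would use the equivalent definition $h(\beta) = \sum_{\frakp: v_\frakp(\beta)>0}v_\frakp(\beta)$. Since we just showed $v_\infty(\Phi_N(\alpha,t)) < 0$, the place $\frakp_\infty$ contributes nothing to this sum, so the total order of vanishing at finite places is precisely $h(\Phi_N(\alpha,t)) = D(N)\hhat_{f_d}(\alpha)$. There is no real obstacle here: the whole argument is a direct bookkeeping of valuations using~\eqref{eq:poles} and Corollary~\ref{cor:height}, with the only mildly delicate point being the case split on the sign of $v_\infty(\alpha)$ at $\frakp_\infty$.
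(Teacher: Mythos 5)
Your proposal is correct and follows essentially the same route as the paper: identify the poles of $\Phi_N(\alpha,t)$ as $\frakp_\infty$ together with the finite poles of $\alpha$, read off their orders from equation~\eqref{eq:poles}, sum to get the height, and recognize the result as $D(N)\hhat_{f_d}(\alpha)$ via the case split on $v_\infty(\alpha)$ (which is exactly the content of Corollary~\ref{cor:height}). The paper writes out the same computation as a chain of equalities rather than invoking the corollary by name, but the argument is the same.
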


\begin{proof}
Since $\Phi_N(z,t)$ is a polynomial in $z$ and $t$, if $\frakp$ is a pole of $\Phi_N(\alpha,t)$, then $\frakp = \frakp_\infty$ or $\frakp$ is a pole of $\alpha$.
It then follows from \eqref{eq:poles} that the poles of $\Phi_N(\alpha,t)$ are \emph{precisely} $\frakp_\infty$ and the poles of $\alpha$. Therefore
	\begin{align*}
	h(\Phi_N(\alpha,t))
		&= -\sum_{\substack{v_\frakp(\alpha) < 0\\\text{or } \frakp = \frakp_\infty}} \min\left\{D(N)v_\frakp(\alpha), \frac{D(N)}{d}v_\frakp(t)\right\}\\
		&= -\sum_{\substack{v_\frakp(\alpha) < 0\\\frakp \ne \frakp_\infty}} D(N)v_\frakp(\alpha) - 
			\begin{cases}
			D(N)v_\infty(\alpha), &\mbox{ if } v_\infty(\alpha) < 0\\
			-D(N)/d, &\mbox{ if } v_\infty(\alpha) \ge 0
			\end{cases}\\
		&= D(N) \cdot 
			\begin{cases}
			h(\alpha), &\mbox{ if } v_\infty(\alpha) < 0\\
			h(\alpha) + 1/d, &\mbox{ if } v_\infty(\alpha) \ge 0
			\end{cases}\\
		&= D(N) \cdot \hhat_{f_d}(\alpha).
	\end{align*}
\end{proof}

Finally, we record the following geometric result:
\begin{thm}\label{thm:Y1(N)}
For each integer $N \ge 1$ and $d \ge 2$, the affine plane curve $\{\Phi_N(z,t) = 0\}$ is smooth and irreducible over $K$.
\end{thm}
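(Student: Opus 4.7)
The plan is to prove smoothness and irreducibility separately. Both are classical results on dynatomic curves of unicritical polynomials, proven for $d=2$ by Douady--Hubbard and Bousch, and extended to general $d$ by work of Morton and others, so my strategy follows established lines.

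For smoothness, I would show that at every $K$-point $(z_0, t_0)$ on $\{\Phi_N = 0\}$, at least one of $\partial_z\Phi_N(z_0,t_0)$, $\partial_t\Phi_N(z_0,t_0)$ is nonzero. The starting identity is $f_d^N(z) - z = \prod_{n \mid N}\Phi_n(z,t)$; differentiating in $t$ and using the recursion $\partial_t f_d^N(z) = 1 + d\bigl(f_d^{N-1}(z)\bigr)^{d-1}\,\partial_t f_d^{N-1}(z)$ lets me express $\partial_t\Phi_N(z_0,t_0)$ explicitly, as a ratio involving the values $\Phi_n(z_0,t_0)$ for $n\mid N$, $n<N$. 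When $z_0$ has minimal period exactly $N$ under $f_{d,t_0}$ and $\Phi_N$ vanishes simply at $(z_0,t_0)$, a short computation using this recursion should yield $\partial_t\Phi_N(z_0,t_0)\ne 0$. The delicate case is that of ``period collisions'': when $z_0$ has minimal period $n$ strictly dividing $N$, so $z_0$ lies on both $\Phi_n$ and $\Phi_N$. Here the multiplier of the $n$-cycle at $z_0$ is necessarily a primitive $(N/n)$-th root of unity, and I would carry out a local analysis of the vanishing orders on both sides of the differentiated factorization to extract the correct leading-order behavior of $\partial_t\Phi_N$.

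For irreducibility, I would use a monodromy argument. Projection onto the $t$-coordinate presents $\{\Phi_N = 0\}$ as a finite cover of $\bbA^1_K$ of degree $\deg_z\Phi_N = D(N)$, and irreducibility is equivalent to transitivity of the monodromy action on a generic fiber. To establish transitivity, I would analyze the cover near parameter values $t^*$ for which the critical point $z=0$ of $f_{d,t^*}$ is itself periodic of exact period $N$: at such parameters the critical orbit collides with the $N$-cycle, producing strong local ramification of the cover that forces the monodromy to act transitively on formal-period-$N$ points. As an alternative route, one could try to deduce irreducibility by a Newton-polygon analysis of $\Phi_N(z,t)$ near $t=\infty$, where $f_d$ degenerates and the periodic-point structure simplifies.

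The main obstacle is the collision case in the smoothness argument together with verifying transitivity of the monodromy for irreducibility. Both hinge on the unicritical structure of $f_d$ — the presence of a single free critical point $z = 0$, whose orbit controls the bifurcation locus — and unpacking this in detail requires either careful local normal-form computations or an appeal to moduli-theoretic results on parameter spaces of unicritical polynomials.
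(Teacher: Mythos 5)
The paper does not prove Theorem~\ref{thm:Y1(N)}; it cites the literature (Douady--Hubbard and Bousch for $d=2$; Lau--Schleicher, Morton, and Gao--Ou for general $d$) and then invokes the Lefschetz principle to transfer the result from $\bbC$ to an arbitrary algebraically closed field of characteristic zero. Your proposal instead sketches a from-scratch proof, which is a legitimate goal, but as written it conceals the two genuinely hard points --- precisely the ones you flag as obstacles --- and so does not yet constitute a proof.

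For smoothness, the difficult case is when $z_0$ has actual period $n$ strictly dividing $N$ while $\Phi_N(z_0,t_0)=0$, so that the multiplier of the $n$-cycle at $t_0$ is a primitive $(N/n)$-th root of unity. What must be shown is that this multiplier, viewed as a function on the period-$n$ dynatomic curve, has nonvanishing derivative at $t_0$; this is a transversality statement, and there is no ``short computation'' extracting it from the recursion for $\partial_t f_d^N$. Over $\bbC$ it is proved via quasiconformal deformation or Thurston rigidity (Douady--Hubbard, Buff--Lei, Gao--Ou), and no elementary algebraic shortcut is known. Likewise, for irreducibility, the assertion that ramification at parameters where $0$ has period $N$ ``forces the monodromy to act transitively'' is the entire content of the theorem, not a corollary of ``strong local ramification''; proving it requires either the combinatorics of the Mandelbrot/multibrot set (Lau--Schleicher, via internal addresses), the Bousch/Buff--Lei degeneration argument, or Morton's Galois-theoretic argument. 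Unless you reproduce one of those arguments in full, the honest and standard move --- and the one the paper actually makes --- is to cite the known results and apply the Lefschetz principle.
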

Theorem~\ref{thm:Y1(N)} was originally proven in the $d = 2$ case by Douady and Hubbard (smoothness; \cite[\textsection XIV]{douady/hubbard:1985}) and Bousch (irreducibility; \cite[Thm. 1 (\textsection 3)]{bousch:1992}, with a subsequent proof by Buff and Lei \cite[Thm. 3.1]{buff/lei:2014}. For $d \ge 2$, irreducibility was proven by Lau and Schleicher \cite[Thm. 4.1]{lau/schleicher:1994} using analytic methods and by Morton \cite[Cor. 2]{morton:1996} using algebraic methods, while both irreducibility and smoothness were later proven by Gao and Ou \cite[Thms. 1.1, 1.2]{gao/ou:2014} using the methods of Buff-Lei. The theorem was originally proven over $\bbC$, but the Lefschetz principle allows us to extend the result to arbitrary fields of characteristic zero.

\subsection{The $abc$-theorem for function fields}
Our main tool for proving the general case of Theorem~\ref{thm:main} is the $abc$-theorem for function fields due to Mason and Stothers \cite{mason:1984, stothers:1981}; see also \cite{silverman:1984} and \cite[Thm. F.3.6]{hindry/silverman:2000}.

\begin{thm}\label{thm:abc}
Let $\calL/\calK$ be a finite extension, and let $g_\calL$ be the genus of $\calL$. Let $u \in \calL \setminus K$, and define $S \subset \calM_\calL$ to be the set of places $\frakq$ for which $v_\frakq(u) \ne 0$ or $v_\frakq(1-u) \ne 0$. Then
	\[
		h_\calL(u) \le 2g_\calL - 2 + |S|.
	\]
\end{thm}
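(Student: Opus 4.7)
The plan is to deduce this directly from the Riemann--Hurwitz formula applied to $u$ viewed as a non-constant morphism $\calX_\calL \to \bbP^1_K$, where $\calX_\calL$ is the smooth projective curve with function field $\calL$. The hypothesis $u \in \calL \setminus K$ ensures $u$ is non-constant, and, exactly as was noted for elements of $\calK$ just before Lemma~\ref{lem:hhat_ineq}, the height $h_\calL(u)$ coincides with the degree $d := \deg(u)$ of this morphism; this matches the definition of $h_\calL$ because the chosen normalization $v_\frakq(\calL^\times) = \bbZ$ is exactly the one that weights fibers correctly so that $\sum_{\frakq \mapsto P} e_\frakq = d$ for every $P \in \bbP^1(K)$.

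Next I would write down Riemann--Hurwitz in characteristic zero: since $\bbP^1$ has genus $0$ and there is no wild ramification,
$$2g_\calL - 2 \;=\; -2d \;+\; \sum_{\frakq \in \calM_\calL}(e_\frakq - 1),$$
where $e_\frakq$ is the ramification index of $u$ at $\frakq$. The definition of $S$ identifies it with the union of the three fibers $u^{-1}(0) \cup u^{-1}(1) \cup u^{-1}(\infty)$, since the condition $v_\frakq(u) \ne 0$ picks out the places where $u$ has a zero or a pole, while $v_\frakq(1-u) \ne 0$ picks out the places where $u$ takes the value $1$ or $\infty$. For each $P \in \{0,1,\infty\}$, the degree identity gives
$$\sum_{\frakq \in u^{-1}(P)} (e_\frakq - 1) \;=\; d - |u^{-1}(P)|,$$
so summing over the three values of $P$ yields $\sum_{\frakq : u(\frakq) \in \{0,1,\infty\}}(e_\frakq - 1) = 3d - |S|$.

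Discarding the nonnegative contributions from places outside $S$ in the Riemann--Hurwitz sum, I would then obtain
$$3d - |S| \;\le\; 2g_\calL - 2 + 2d,$$
which rearranges to $h_\calL(u) = d \le 2g_\calL - 2 + |S|$, as required.

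The only step that requires genuine care — and which I would view as the main obstacle to writing a fully rigorous proof rather than a sketch — is reconciling the normalization of the valuations $v_\frakq$ (which is intrinsic to $\calL$, not the extension $\calL/\calK$) with the claim $h_\calL(u) = \deg(u)$. Concretely, one must check that with $v_\frakq(\pi_\frakq) = 1$ for a uniformizer $\pi_\frakq$, the total weighted number of zeros of $u$ is exactly the degree of $u$ as a finite map to $\bbP^1_K$; equivalently, one verifies $\sum_{\frakq \in u^{-1}(0)} e_\frakq = d$ using the residue field degree and ramification index decomposition along the morphism $\calX_\calL \to \bbP^1_K$ induced by $u$. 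Once this compatibility is in place, the rest of the argument is essentially a one-line consequence of Riemann--Hurwitz.
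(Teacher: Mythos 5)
The paper does not prove Theorem~\ref{thm:abc}; it is quoted from the literature (Mason and Stothers), with references to \cite{silverman:1984} and \cite[Thm.~F.3.6]{hindry/silverman:2000}, so there is no internal proof to compare against. Your Riemann--Hurwitz argument is a correct proof and is in fact the standard one appearing in those references. The concern you flag about normalization is genuine but straightforward to resolve: because $K$ is algebraically closed, every residue field equals $K$ and the degree of the finite morphism $u\colon \calX_\calL \to \bbP^1_K$ satisfies $\deg u = \sum_{\frakq \in u^{-1}(P)} e_\frakq$ for every closed point $P$ of $\bbP^1_K$, with no residue-degree factors; combined with the normalization $v_\frakq(\calL^\times) = \bbZ$, one gets $e_\frakq = v_\frakq(u)$ for $\frakq$ lying over $0$ (and $e_\frakq = v_\frakq(1-u)$ over $1$, $e_\frakq = -v_\frakq(u)$ over $\infty$), which immediately gives $h_\calL(u) = \deg u$. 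The only phrasing I would tighten is the claim that the normalization is what ``weights fibers correctly so that $\sum_{\frakq \mapsto P} e_\frakq = d$'': that fiber identity is intrinsic and holds regardless of how the $v_\frakq$ are normalized; the normalization enters only in making $v_\frakq(u)$ equal to the geometric ramification index, which is what ties $h_\calL$ to $\deg$. With that clarification, the remaining steps — identifying $S$ with $u^{-1}(\{0,1,\infty\})$, noting these three fibers are disjoint, discarding the nonnegative Riemann--Hurwitz contributions off $S$, and rearranging — are all correct.
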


\section{An elementary reduction}\label{sec:reduction}

For the majority of this article, we focus on the case of Theorem~\ref{thm:main} in which $M$ is equal to 1. In this section, we justify this approach: First, we prove the theorem when $M = 0$, and then we show how the $M \ge 1$ case may essentially be reduced to $M = 1$.

\subsection{Periodic points}
In order to have $\alpha$ not realize portrait $(0,N)$ for $f_d$, it must be the case that whenever $\Phi_N(\alpha,t)$ vanishes, so too does $\Phi_n(\alpha,t)$ for some proper divisor $n$ of $N$.

\begin{lem}\label{lem:bifurcation}
Fix integers $N \ge 1$ and $d \ge 2$.
	\begin{enumerate}
	\item Let $x,c \in K$, and suppose $\Phi_N(x,c) = \Phi_n(x,c) = 0$ for some proper divisor $n$ of $N$. Then
		\[
			\left. \frac{\partial \Phi_N(z,t)}{\partial z}\right|_{(x,c)} = 0.
		\]
	\item There are strictly fewer than $D(N)$ elements $c \in K$ for which there exists $x \in K$ with $\Phi_N(x,c) = \Phi_n(x,c) = 0$ for some proper divisor $n$ of $N$.
	\end{enumerate}
\end{lem}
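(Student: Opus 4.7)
Plan for (A): I will differentiate the factorization $f_d^N(z) - z = \Phi_N(z,t) \cdot H(z,t)$, where $H(z,t) := \prod_{m \mid N, m \ne N} \Phi_m(z,t)$, with respect to $z$. At $(x, c)$ with $\Phi_N(x,c) = \Phi_n(x,c) = 0$ we have $H(x, c) = 0$, so the differentiated identity gives $(f_d^N)'(x) = 1$. Combined with $f_d^n(x) = x$ and the chain rule, the multiplier $\lambda := (f_{d,c}^{n'})'(x)$ at the exact period $n' \mid n$ of $x$ satisfies $\lambda^{N/n'} = 1$. A brief order comparison on the identity $f_d^n(z) - z = \prod_{m \mid n} \Phi_m(z,c)$ rules out $\lambda = 1$ (otherwise the parabolic-fixed-point expansion forces $\ord_{z=x}\Phi_{n'}(z,c) = \ord_{z=x}(f_d^n(z)-z)$ with no room left for $\Phi_N$ to vanish at $(x,c)$), so $\lambda$ is a primitive $k$-th root of unity with $k \ge 2$ dividing $N/n'$. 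A standard iteration of $g(u) := f_d^{n'}(x+u) - x$ (with $g'(0) = \lambda$) yields $g^{N/n'}(u) - u = O(u^{k+1})$, so $\ord_{z=x}(f_d^N(z) - z) \ge k + 1 \ge 3$. Since $\Phi_{n'}(z, c)$ vanishes at $z = x$ only to order $1$ and no other $\Phi_m$ for $m \mid N$, $m \ne n', N$ vanishes at $(x, c)$, the factorization $f_d^N(z) - z = \prod_{m \mid N} \Phi_m(z,c)$ forces $\ord_{z=x}\Phi_N(z,c) \ge k \ge 2$, so $\partial_z \Phi_N(x, c) = 0$.

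Plan for (B): By part (A), $B$ lies in the image under $\pi_c$ of the finite intersection $\calC_N \cap \bigcup_{n} \calC_n$. For each proper divisor $n \mid N$, each point of $\calC_N \cap \calC_n$ lies in a full $f_{d,c}$-orbit of $n$ such points at the same $c$, so $|B_n| \le |\calC_N \cap \calC_n|/n$ where $B_n := \pi_c(\calC_N \cap \calC_n)$. I will bound $|\calC_N \cap \calC_n|$ by computing the degree of $\Phi_n$ as a rational function on the smooth projective model $\tilde{\calC_N}$ of $\calC_N$ (smooth and irreducible by Theorem~\ref{thm:Y1(N)}); this degree equals the sum of pole orders of $\Phi_n$ at the points of $\tilde{\calC_N}$ over $\infty \in \bbP^1_t$, computed via Puiseux expansions. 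The key input is that the weighted-homogeneous leading part of $\Phi_N$ (with weight $1$ on $z$ and $d$ on $t$) factors as $(z^d + t)^{D(N)/d}$, reflecting the unicritical form of $f_d$; this forces each Puiseux branch at infinity to satisfy $z^d + t = 0$ at leading order, producing cancellations that drop pole orders below the naive Bezout bound. Summing over proper divisors $n$ delivers $|B| < D(N)$.

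The main obstacle will be the strict inequality in (B): the pole-order cancellation at infinity must be tracked precisely enough to beat $D(N)$ uniformly in $(d, N)$, not merely asymptotically. Part (A) is the essential input that justifies focusing on the critical-value/pole-order structure of $\calC_N$ at all.
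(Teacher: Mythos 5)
The paper does not prove this lemma directly: it cites \cite[Thm.\ 2.4]{morton/patel:1994} for part (A) and \cite[Cor.\ 3.3]{morton/vivaldi:1995} for part (B). So your attempt is a ``from scratch'' reconstruction, and the two parts fare very differently.

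For (A), your sketch is essentially a correct re-derivation of the Morton--Patel argument via the local theory of rationally indifferent cycles. Differentiating $f_d^N(z)-z = \Phi_N(z,t)H(z,t)$ to get $(f_{d,c}^N)'(x)=1$, passing to the multiplier $\lambda$ of the exact period $n'$, ruling out $\lambda=1$ by the parabolic order count, and then using $\ord_{z=x}(f_{d,c}^N(z)-z)\ge k+1$ for a primitive $k$-th root of unity multiplier to force $\ord_{z=x}\Phi_N(z,c)\ge k\ge 2$ --- this all works, and it checks out on the test case $(x,c)=(-1/2,-3/4)$, $(N,n,d)=(2,1,2)$. The one step you assert without proof is that ``no other $\Phi_m$ for $m\mid N$, $m\ne n', N$ vanishes at $(x,c)$.'' This is precisely the classification of formal periods at a periodic point with root-of-unity multiplier (only $n'$ and $n'k$ can occur, with $N=n'k$ following), which is a nontrivial order-counting lemma of Morton--Silverman. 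The machinery you set up (comparing $\ord_x(f_{d,c}^m(z)-z)$ for $m=n's$ against the partial products of the $\Phi_{m'}$) does prove it, but as written it is left implicit, and the conclusion $\ord_x\Phi_N\ge k$ genuinely depends on it. You should either spell that induction out or cite it.

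For (B) there is a genuine gap, and it is the crux of the lemma. You correctly observe that the naive bound on $|\{\Phi_N=0\}\cap\{\Phi_n=0\}|$ is on the order of $2D(N)D(n)/d$, which once summed over the proper divisors $n$ of $N$ vastly exceeds $D(N)$ (already for $N=12$, $d=2$ the naive total is in the tens of thousands against $D(12)=4020$); and you correctly identify the weighted-homogeneous leading part $(z^d+t)^{D(N)/d}$ of $\Phi_N$ as the source of the cancellation at infinity that must save the day. But the actual pole-order computation on the branches at infinity of the smooth model is never carried out, and the strict inequality $<D(N)$ is nowhere established --- as you yourself flag (``the main obstacle will be the strict inequality\,\ldots''). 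That computation is exactly the content of the Morton--Vivaldi reference, which controls the degrees $\deg_t\Res_z(\Phi_N,\Phi_n)$ (equivalently the factors $\Delta_{N,n}$ of $\disc_z\Phi_N$) rather than bounding the raw intersection. Without reproducing that degree analysis or an equivalent, the plan for (B) is a plausible outline but not a proof.
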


\begin{proof}
For part (A), see \cite[Thm. 2.4]{morton/patel:1994}; for part (B), see \cite[Cor. 3.3]{morton/vivaldi:1995}.
\end{proof}
We may now prove the $M = 0$ case of Theorem~\ref{thm:main}.
\begin{prop}\label{prop:periodic}
Let $\alpha \in \calK$, and let $N \ge 1$ and $d \ge 2$ be integers. Then $\alpha$ realizes portrait $(0,N)$ for $f_d$ if and only if $(\alpha,N,d) \ne (-1/2,2,2)$.
\end{prop}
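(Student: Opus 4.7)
The plan is to split according to whether $\alpha$ is constant. The case $\alpha \in K$ is exactly the $M = 0$ part of Theorem~\ref{thm:old}, which identifies $(\alpha,N,d) = (-1/2,2,2)$ as the unique exception (and simultaneously confirms that this tuple fails to realize portrait $(0,2)$). So the substance is to prove that every nonconstant $\alpha \in \calK \setminus K$ realizes portrait $(0,N)$, with no exceptions.

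Fix such an $\alpha$ and consider $g := \Phi_N(\alpha, t) \in \calK$. First I would verify that $g \ne 0$: otherwise the graph $\Gamma_\alpha = \{(\alpha(c), c)\}$ would lie inside the affine plane curve $\{\Phi_N(z,t) = 0\}$, which is irreducible by Theorem~\ref{thm:Y1(N)}, forcing the two curves to coincide; but the $t$-projection has degree $1$ on $\Gamma_\alpha$ and degree $D(N) \ge 2$ on $\{\Phi_N = 0\}$, a contradiction. Next, Lemma~\ref{lem:htPhiN} gives $h(g) = D(N) \cdot \hhat_{f_d}(\alpha)$, and Corollary~\ref{cor:height} together with $h(\alpha) \ge 1$ (valid since $\alpha$ is nonconstant) yields $\hhat_{f_d}(\alpha) \ge 1$, so $g$ has at least $D(N)$ finite zeros, counted with multiplicity.

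Call a finite $c_0 \in K$ a \emph{bifurcation value} for $\alpha$ if $\Phi_N(\alpha(c_0), c_0) = \Phi_n(\alpha(c_0), c_0) = 0$ for some proper divisor $n$ of $N$. Each bifurcation value lies in the set $B$ of Lemma~\ref{lem:bifurcation}(B), so there are at most $D(N) - 1$ of them. At any bifurcation value $c_0$, Lemma~\ref{lem:bifurcation}(A) gives $\partial_z \Phi_N(\alpha(c_0), c_0) = 0$, while smoothness of $\{\Phi_N = 0\}$ (Theorem~\ref{thm:Y1(N)}) forces $\partial_t \Phi_N(\alpha(c_0), c_0) \ne 0$; hence the tangent to $\{\Phi_N = 0\}$ at $(\alpha(c_0), c_0)$ is horizontal. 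On the other hand the graph $\Gamma_\alpha$, parametrized by $c \mapsto (\alpha(c), c)$, has tangent direction $(\alpha'(c_0), 1)$, which is never horizontal. The two curves therefore meet transversely, and the local intersection multiplicity $v_{c_0}(g)$ equals $1$.

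Combining these estimates, the total multiplicity of $g$ at bifurcation values is at most $D(N) - 1 < D(N)$, so some finite zero $c$ of $g$ is not a bifurcation value; at such a $c$, the reduction $\alpha(c)$ is a root of $\Phi_N(z, c)$ and of no $\Phi_n(z, c)$ with $n \mid N$, $n < N$, so $\alpha(c)$ has exact period $N$ under $f_{d,c}$. I expect the transversality argument to be the main nontrivial step; a minor additional point to check is that bifurcation values are necessarily regular (rather than polar) points of $\alpha$, which is automatic because any pole of $\alpha$ is a pole, not a zero, of $g$ by \eqref{eq:poles}.
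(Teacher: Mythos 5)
Your proof is correct and takes essentially the same approach as the paper: reduction to Theorem~\ref{thm:old} for constant $\alpha$, Lemma~\ref{lem:htPhiN} for the lower bound of $D(N)$ finite zeros of $\Phi_N(\alpha,t)$, Lemma~\ref{lem:bifurcation}(B) for the bound of at most $D(N)-1$ bifurcation values, and Lemma~\ref{lem:bifurcation}(A) together with smoothness (Theorem~\ref{thm:Y1(N)}) to deduce simple vanishing at each. Your transversality argument is a geometric restatement of the paper's chain-rule computation showing that $\Phi_N(\alpha,t)$ has a simple root at $t = c$.
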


\begin{proof}
For $\alpha \in K$, the result follows from Theorem~\ref{thm:old}, so we assume that $\alpha \in \calK \setminus K$. In this case, we have $\hhat_{f_d}(\alpha) \ge h(\alpha) \ge 1$, so it follows from Lemma~\ref{lem:htPhiN} that the number of places, counted with multiplicity, at which $\Phi_N(\alpha,t)$ vanishes is at least $D(N)$. Now suppose $\frakp_c$ is a place at which $\Phi_N(\alpha,t)$ vanishes, but $\alpha$ has period $n < N$ modulo $\frakp_c$. By Lemma~\ref{lem:bifurcation}(B), there are fewer than $D(N)$ such places, so to prove the proposition it suffices to show that $\Phi_N(\alpha,t)$ vanishes to order one at each such place. By Lemma~\ref{lem:bifurcation}(A), we have
	\[
		\left. \frac{\partial \Phi_N(z,t)}{\partial z} \right|_{(\alpha(c),c)} = 0.
	\]
Here we write $\alpha(c)$ for the reduction of $\alpha$ modulo $\frakp_c$, since this is the image of $c$ under $\alpha$ if we consider $\alpha$ as a rational map. Since the affine curve $\{\Phi_N(z,t) = 0\}$ is smooth, we must also have
	\[
		\left. \frac{\partial \Phi_N(z,t)}{\partial t} \right|_{(\alpha(c),c)} \ne 0.
	\]
This implies that
	\[
		\left. \frac{\partial \Phi_N(\alpha,t)}{\partial t} \right|_{t = c}
			= \left. \frac{\partial \Phi_N(z,t)}{\partial z}\right|_{(\alpha(c),c)} \cdot \alpha'(c) + \left. \frac{\partial \Phi_N(z,t)}{\partial t} \right|_{(\alpha(c),c)} \ne 0,
	\]
so $t = c$ is a simple root of $\Phi_N(\alpha,t)$; i.e., $\Phi_N(\alpha,t)$ vanishes to order one at $\frakp_c$, completing the proof.
\end{proof}

\subsection{Strictly preperiodic points} As mentioned previously, we will generally restrict our attention to the case $M = 1$. We now justify this approach.

\begin{lem}\label{lem:reduction}
Fix $\alpha \in \calK$ and integers $M \ge 2$, $N \ge 1$, and $d \ge 2$. Then $\alpha$ realizes portrait $(M,N)$ for $f_d$ if and only if $f_d^{M-1}(\alpha)$ realizes portrait $(1,N)$ for $f_d$.
\end{lem}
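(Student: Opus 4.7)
The lemma is essentially a bookkeeping exercise: it unpacks what it means to have portrait $(M,N)$ and uses the fact that the iterates of $f_d$ commute with reduction modulo any finite place $\frakp \in \Spec K[t]$ (since $f_d$ has good reduction away from $\frakp_\infty$). So the whole proof lives at the level of the reduced dynamical system over $K$.

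For the forward direction, I would assume a place $\frakp_c \in \Spec K[t]$ exists such that $\tilde{\alpha}$ has portrait $(M,N)$ under $f_{d,c}$. Writing $\beta := f_d^{M-1}(\alpha)$ and using $\widetilde{f_d^i(\alpha)} = f_{d,c}^i(\tilde{\alpha})$, I would observe that $f_{d,c}(\tilde{\beta}) = f_{d,c}^M(\tilde{\alpha})$ has exact period $N$, while $\tilde{\beta} = f_{d,c}^{M-1}(\tilde{\alpha})$ is itself not periodic (otherwise $\tilde{\alpha}$ would have a portrait with preperiodic length strictly less than $M$, contradicting minimality). Consequently $\tilde{\beta}$ has portrait $(M',N)$ with $M' \ge 1$, and since $f_{d,c}(\tilde{\beta})$ is already periodic we must have $M' = 1$. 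Hence $\beta = f_d^{M-1}(\alpha)$ realizes portrait $(1,N)$ at $\frakp_c$.

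For the reverse direction, I would assume that some $\frakp_c \in \Spec K[t]$ makes $\widetilde{f_d^{M-1}(\alpha)}$ have portrait $(1,N)$ under $f_{d,c}$, and then verify that the same place works for $\alpha$. Letting $(M_0,N_0)$ be the portrait of $\tilde{\alpha}$, the fact that $f_{d,c}^M(\tilde{\alpha}) = \widetilde{f_d(f_d^{M-1}(\alpha))}$ is periodic of exact period $N$ forces $N_0 = N$ and $M_0 \le M$. If we had $M_0 \le M-1$, then $f_{d,c}^{M-1}(\tilde{\alpha}) = \widetilde{f_d^{M-1}(\alpha)}$ would already be periodic, contradicting the hypothesis that its portrait has preperiodic length exactly $1$. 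Therefore $M_0 = M$, so $\tilde{\alpha}$ has portrait $(M,N)$ at $\frakp_c$.

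Neither direction involves a genuine obstacle; the only thing to keep track of is the minimality conditions built into the definition of portrait, which is exactly what prevents $M'$ from collapsing to $0$ (forward) and what prevents $M_0$ from collapsing below $M$ (reverse). The hypothesis $M \ge 2$ is used only to ensure $M-1 \ge 1$, so that the statement ``$f_d^{M-1}(\alpha)$ has portrait $(1,N)$'' is meaningful.
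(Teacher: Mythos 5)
Your proof is correct and follows essentially the same approach as the paper: the paper's proof is a one-line observation that both sides of the equivalence amount to the existence of a finite place at which $f_d^M(\alpha)$ reduces to a point of period $N$ while $f_d^{M-1}(\alpha)$ does not, and your argument is simply the unpacked version of that observation, checking the minimality conditions in each direction. There is no gap; the only implicit fact you rely on — that reduction at a finite place commutes with iteration of $f_d$ — holds because $f_d$ has good reduction away from $\frakp_\infty$, and the case $\tilde\alpha = \infty$ never arises since $\infty$ is a fixed point.
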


\begin{proof}
We simply note that both statements are equivalent to the statement that, at some place $\frakp \in \calM_\calK \setminus \{\frakp_\infty\}$, $f_d^M(\alpha)$ reduces to a point of period $M$ while $f_d^{M-1}(\alpha)$ does not.
\end{proof}

We also record a useful characterization of points in $K$ of portrait $(1,N)$.

\begin{lem}\label{lem:(1,N)}
Fix $x,c \in K$, $d \ge 2$, and $N \ge 1$. Then $x$ has portrait $(1,N)$ for $f_{d,c}$ if and only if $x \ne 0$ and $\zeta x$ has period $N$ for $f_{d,c}$ for some $d$th root of unity $\zeta \ne 1$.
\end{lem}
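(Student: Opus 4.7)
The fundamental algebraic observation I would exploit is that, since $\zeta^d = 1$ yields
\[
f_{d,c}(\zeta x) = \zeta^d x^d + c = x^d + c = f_{d,c}(x),
\]
we have $f_{d,c}^k(\zeta x) = f_{d,c}^k(x)$ for every $k \ge 1$. Essentially everything in the lemma reduces to repeated use of this identity.

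For the forward direction, I would start from the assumption that $x$ has portrait $(1,N)$, i.e.\ $f_{d,c}(x)$ has exact period $N$. The equation $f_{d,c}^{N+1}(x) = f_{d,c}(x)$ rearranges to $(f_{d,c}^N(x))^d = x^d$. If $x = 0$, then $f_{d,c}^N(x) = 0 = x$, making $x$ periodic and contradicting portrait $(1,N)$; hence $x \ne 0$, and $\zeta := f_{d,c}^N(x)/x$ is a $d$-th root of unity. If $\zeta = 1$, then $f_{d,c}^N(x) = x$, again contradicting strict preperiodicity, so $\zeta \ne 1$. To verify that $\zeta x$ has period exactly $N$, I would use $f_{d,c}^N(\zeta x) = f_{d,c}^N(x) = \zeta x$ for periodicity, and rule out any smaller period $n \mid N$ by noting that $f_{d,c}^n(\zeta x) = \zeta x$ would force $f_{d,c}^n(x) = \zeta x$, hence $f_{d,c}^{n+1}(x) = f_{d,c}(\zeta x) = f_{d,c}(x)$, contradicting the exact period $N$ of $f_{d,c}(x)$.

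For the backward direction, suppose $x \ne 0$ and $\zeta x$ has period $N$ for some $d$-th root of unity $\zeta \ne 1$. Then $f_{d,c}^N(x) = f_{d,c}^N(\zeta x) = \zeta x$ and $f_{d,c}^{N+1}(x) = f_{d,c}(\zeta x) = f_{d,c}(x)$, so $f_{d,c}(x)$ is periodic of period dividing $N$. To pin the period down to exactly $N$: if it were some $n$ with $n \mid N$ and $n < N$, then $(f_{d,c}^n(x))^d = x^d$ gives $f_{d,c}^n(x) = \eta x$ for some $\eta^d = 1$; iterating, $f_{d,c}^{jn}(x) = \eta x$ for all $j \ge 1$, so specializing to $jn = N$ yields $\eta x = \zeta x$ and hence $\eta = \zeta$, whence $f_{d,c}^n(\zeta x) = \zeta x$ would contradict the exact period $N$ of $\zeta x$. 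That $x$ itself is not periodic follows because $f_{d,c}^N(x) = \zeta x \ne x$, and any period of $x$ is necessarily a multiple of $N$ (since such a period descends to a period of $f_{d,c}(x)$), so the same iteration trick gives $f_{d,c}^{kN}(x) = \zeta x \ne x$ for all $k \ge 1$.

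I expect the only real nuisance to be the bookkeeping around exact versus merely dividing periods: the identity $f_{d,c}^{N+1}(x) = f_{d,c}(x)$ by itself only tells us that $f_{d,c}(x)$ has \emph{some} period dividing $N$, and in each direction one needs the short iteration argument above (the ``power-of-$\eta$'' trick) to rule out strictly smaller periods. Once this is handled correctly, everything else is a purely mechanical consequence of the single identity $f_{d,c}(\zeta z) = f_{d,c}(z)$.
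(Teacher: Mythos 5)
Your proof is correct. Both you and the paper hinge on the single identity $f_{d,c}(\zeta z) = f_{d,c}(z)$ for $\zeta^d = 1$ (equivalently, two points have the same $f_{d,c}$-image iff they differ by a $d$th-root-of-unity factor), but the arguments are structured differently. The paper's proof is shorter because it leans on a structural fact about periodic points: a point of period $N$ has exactly one preimage of period $N$, namely the preceding element of the cycle. In the forward direction, this immediately identifies the periodic preimage of $f_{d,c}(x)$ as some $\zeta x$ with $\zeta \ne 1$; in the backward direction, it lets the author conclude that $x$ cannot be periodic (else $x$ and $\zeta x$ would both be periodic preimages of $f_{d,c}(x)$, forcing $x = \zeta x$). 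You avoid invoking this uniqueness fact and instead do the period-checking by hand: you explicitly rewrite $f_{d,c}^{N+1}(x) = f_{d,c}(x)$ as $(f_{d,c}^N(x))^d = x^d$ to extract $\zeta$, and you rule out smaller periods by the iteration trick $f_{d,c}^{jn}(x) = \eta x$. Your route is longer and requires more careful bookkeeping around minimal versus merely dividing periods (which you correctly flag as the main nuisance), but it is more elementary and fully self-contained; the paper's route is cleaner for a reader who already has the ``unique periodic preimage'' fact at their fingertips.
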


\begin{proof}
We first note that two points $x$ and $y$ have the same image under $f_{d,c}$ if and only if $y = \zeta x$ for some $d$th root of unity $\zeta$.

Suppose $x$ has portrait $(1,N)$ for $f_{d,c}$. Then $x$ is not periodic, but $f_{d,c}(x)$ has period $N$ and therefore has exactly one preimage $y$ with period $N$. We can write $y = \zeta x$ for some $d$th root of unity $\zeta$, and since $x \ne y = \zeta x$, we must have $\zeta \ne 1$ and $x \ne 0$.

Now suppose $x \ne 0$ and $\zeta x$ has period $N$ for $f_{d,c}$ for some $d$th root of unity $\zeta \ne 1$. Since $\zeta x$ has period $N$, so must $f_{d,c}(\zeta x) = f_{d,c}(x)$. However, since $x \ne 0$ and $\zeta \ne 1$, we have $x \ne \zeta x$. Thus $x$ is not periodic, and therefore $x$ has portrait $(1,N)$.
\end{proof}

\begin{cor}\label{cor:zero}
Fix integers $d \ge 2$ and $N \ge 1$. Then $0$ does not realize portrait $(1,N)$ for $f_d$.
\end{cor}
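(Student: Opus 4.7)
The claim is immediate from Lemma~\ref{lem:(1,N)}, so the plan is very short. I would fix a place $\frakp_c \in \calM_\calK \setminus \{\frakp_\infty\}$ (so $c \in K$) and observe that the reduction of $0 \in \calK$ modulo $\frakp_c$ is just $0 \in K$. For $0$ to realize portrait $(1,N)$ for $f_d$, we would need $0 \in K$ to have portrait $(1,N)$ under the specialization $f_{d,c}(z) = z^d + c$ for some $c \in K$.

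Now I would apply Lemma~\ref{lem:(1,N)} with $x = 0$: the lemma asserts that $x$ has portrait $(1,N)$ for $f_{d,c}$ if and only if $x \ne 0$ \emph{and} $\zeta x$ has period $N$ for some $d$th root of unity $\zeta \ne 1$. The first conjunct $x \ne 0$ fails when $x = 0$, so $0$ does not have portrait $(1,N)$ for $f_{d,c}$ for any $c \in K$. Equivalently, since $0$ is a totally ramified fixed point of the degree-$d$ map $z \mapsto z^d$ (the reduction of the ``derivative direction'' used in the lemma), its unique preimage under $f_{d,c}$ is itself, so $0$ can never be strictly preperiodic under any $f_{d,c}$.

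There is no substantive obstacle here: the work was done in Lemma~\ref{lem:(1,N)}, and the corollary is simply the observation that the hypothesis ``$x \ne 0$'' in that lemma rules out $x = 0$ universally. I would keep the write-up to one or two sentences.
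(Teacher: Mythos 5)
Your proposal is correct and matches the paper's intent exactly: the corollary is stated without proof precisely because it is the observation that applying Lemma~\ref{lem:(1,N)} with $x = 0$ fails the hypothesis $x \ne 0$, and this holds at every finite place. Your final parenthetical is slightly loosely worded (the reduction of $0$ is not its own preimage under $f_{d,c}$ unless $c = 0$; the relevant fact is that $f_{d,c}^{-1}(f_{d,c}(0)) = \{0\}$ because $0$ is the unique critical point), but this is only auxiliary commentary and the main argument is sound.
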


\section{The degree $d \ge 3$ case}\label{sec:deg3}

In this section, we show that if $d \ge 3$ and $\alpha \ne 0$, then $\alpha$ realizes portrait $(1,N)$ for every $N \ge 1$. We then use this to prove the $d \ge 3$ case of Theorem~\ref{thm:main}; see Proposition~\ref{prop:main3} below.

Fix $\alpha \in \calK^\times$, integers $d \ge 3$ and $N \ge 1$, and a primitive $d$th root of unity $\zeta$. Define the polynomial
	\[
		\sigma(z) := \frac{1}{\zeta - 1} \left( \frac{1}{\zeta\alpha} z - 1 \right) \in \calK[z],
	\]
which maps $\zeta\alpha$, $\zeta^2\alpha$, and $\infty$ to 0, 1, and $\infty$, respectively. Set $\gamma := f_d^N(\alpha)$, and define
	\begin{align*}
	\calA &:= \{\frakp \in \calM_\calK : v_\frakp(\sigma(\gamma)) \ne 0 \mbox{ or } v_\frakp(\sigma(\gamma) - 1) > 0\};\\
	\calB &:= \{\frakp_\infty\} \cup \{\frakp \in \calM_\calK : v_\frakp(\alpha) \ne 0 \mbox{ or } v_\frakp(\Phi_n(\zeta^k\alpha)) > 0 \mbox{ for some } n < N \mbox{ and } k \in \{1,2\}\}.
	\end{align*}

\begin{lem}\label{lem:portrait3}
If $\frakp \in \calA \setminus \calB$, then $\alpha$ has portrait $(1,N)$ for $f_d$ modulo $\frakp$.
\end{lem}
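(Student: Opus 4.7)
The plan is to verify the criterion of Lemma~\ref{lem:(1,N)}: it suffices to show that, modulo $\frakp$, the point $\alpha$ reduces to a nonzero value and that $\zeta^k\alpha$ has period exactly $N$ under the reduced map, for some $k \in \{1,2\}$. Since $d \ge 3$, both $\zeta$ and $\zeta^2$ are nontrivial $d$th roots of unity, so either $k$ supplies a legitimate witness in Lemma~\ref{lem:(1,N)}.

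Since $\frakp \notin \calB$, I have $\frakp \ne \frakp_\infty$ and $v_\frakp(\alpha) = 0$, so $\alpha \not\equiv 0 \pmod{\frakp}$; and by Lemma~\ref{lem:hhat_ineq} the quantity $\gamma := f_d^N(\alpha)$ satisfies $v_\frakp(\gamma) \ge 0$. Next I would rewrite
\[
\sigma(\gamma) \;=\; \frac{\gamma - \zeta\alpha}{\zeta\alpha(\zeta - 1)}, \qquad \sigma(\gamma) - 1 \;=\; \frac{\gamma - \zeta^2\alpha}{\zeta\alpha(\zeta - 1)},
\]
and use that $\zeta(\zeta - 1) \in K^\times$ together with $v_\frakp(\alpha) = 0$ to conclude
\[
v_\frakp(\sigma(\gamma)) = v_\frakp(\gamma - \zeta\alpha) \ge 0 \quad \text{and} \quad v_\frakp(\sigma(\gamma) - 1) = v_\frakp(\gamma - \zeta^2\alpha) \ge 0.
\]
This rules out the ``$v_\frakp(\sigma(\gamma)) < 0$'' branch allowed by the definition of $\calA$, so the membership $\frakp \in \calA$ forces one of these two valuations to be strictly positive. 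Equivalently, $\gamma \equiv \zeta^k\alpha \pmod{\frakp}$ for some $k \in \{1,2\}$.

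From here, the identity $f_d(\zeta^k\alpha) = (\zeta^k\alpha)^d + t = \alpha^d + t = f_d(\alpha)$ yields
\[
f_d^N(\zeta^k\alpha) = f_d^{N-1}(f_d(\alpha)) = \gamma \equiv \zeta^k\alpha \pmod{\frakp},
\]
so $\zeta^k\alpha$ has period dividing $N$ modulo $\frakp$. To upgrade this to period exactly $N$, I would argue by contradiction: if the period were some $n \mid N$ with $n < N$, then $\Phi_n(\zeta^k\alpha, c) = 0$ for the reduction $c$ of $t$ at $\frakp$, forcing $v_\frakp(\Phi_n(\zeta^k\alpha, t)) > 0$ and hence $\frakp \in \calB$. (Regularity $v_\frakp(\Phi_n(\zeta^k\alpha, t)) \ge 0$ is automatic since $\Phi_n \in \bbZ[z,t]$ and both $\alpha$ and $t$ are regular at $\frakp$.) Lemma~\ref{lem:(1,N)} then delivers the conclusion.

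The argument is essentially definition-chasing; the only subtlety is recognizing that the ``$v_\frakp(\sigma(\gamma)) < 0$'' clause in the definition of $\calA$ is vacuous under the hypothesis $\frakp \notin \calB$, which is precisely what Lemma~\ref{lem:hhat_ineq} ensures. There is no substantive obstacle internal to this lemma itself; the real difficulty is deferred to a subsequent $abc$-based count that must establish $|\calA| > |\calB|$, thereby showing that $\calA \setminus \calB$ is nonempty.
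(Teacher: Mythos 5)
Your proof is correct and follows essentially the same route as the paper's: use membership in $\calA$ and non-membership in $\calB$ to force $f_d^N(\alpha) \equiv \zeta^k\alpha \pmod{\frakp}$ for some $k \in \{1,2\}$, then show $\zeta^k\alpha$ reduces to a finite point of period exactly $N$ and invoke Lemma~\ref{lem:(1,N)}. The only cosmetic difference is that you dispatch the pole case up front via Lemma~\ref{lem:hhat_ineq} and a direct valuation computation, whereas the paper appeals to the good reduction (hence invertibility) of $\sigma$ and then observes that poles of $\gamma$ lie in $\calB$.
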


\begin{proof}
Let $\frakp \in \calA \setminus \calB$. Since $v_\frakp(\alpha) = 0$, the map $\sigma$ has good reduction --- hence remains invertible --- modulo $\frakp$. Since $\frakp \in \calA$, $\sigma(\gamma)$ reduces to 0, 1, or $\infty$ modulo $\frakp$, which implies that $\gamma$ reduces to $\zeta\alpha$, $\zeta^2\alpha$, or $\infty$ modulo $\frakp$. Since the only poles of $\gamma = f_d^N(\alpha)$ are $\frakp_\infty$ and the poles of $\alpha$, and since such places lie in $\calB$, it must be the case that $f_d^N(\alpha) \equiv \zeta^k\alpha \pmod \frakp$ for some $k \in \{1,2\}$. 

Since $\zeta^k\alpha \equiv f_d^N(\alpha) = f_d^N(\zeta^k\alpha)$ (mod $\frakp$) and $\frakp$ is not a pole of $\alpha$, $\zeta^k\alpha$ reduces to a finite point of period dividing $N$ for $f_d$ modulo $\frakp$, and the period must equal $N$ since $\Phi_n(\zeta^k\alpha) \not \equiv 0 \pmod \frakp$ for all $n < N$. Finally, since $\alpha \not \equiv 0 \pmod \frakp$, $\alpha$ has portrait $(1,N)$ modulo $\frakp$ by Lemma~\ref{lem:(1,N)}.
\end{proof}

\begin{lem}\label{lem:nonempty3}
The set $\calA \setminus \calB$ is nonempty.
\end{lem}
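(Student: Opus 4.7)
The plan is to apply the $abc$-theorem (Theorem~\ref{thm:abc}) over $\calK$ to $u := \sigma(\gamma) \in \calK$, giving a lower bound on $|\calA|$, and then combinatorially upper-bound $|\calB|$ using Lemma~\ref{lem:htPhiN}; showing $|\calA| > |\calB|$ then forces $\calA \setminus \calB \ne \emptyset$. First I would verify $u \notin K$: by Lemma~\ref{lem:hhat_ineq}, $v_\infty(\gamma) < 0$, so $\gamma \notin K$, and $\sigma$ is a M\"obius transformation. Moreover, the exceptional set $S$ in Theorem~\ref{thm:abc} coincides with $\calA$, because any $\frakp$ with $v_\frakp(u - 1) < 0$ is a pole of $u$ and so already contributes via $v_\frakp(u) \ne 0$. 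Since $g_\calK = 0$, Theorem~\ref{thm:abc} yields $|\calA| \ge h(\sigma(\gamma)) + 2$.

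For the lower bound on $h(\sigma(\gamma))$, I would use $\sigma(\gamma) = (\gamma - \zeta\alpha)/(\zeta(\zeta - 1)\alpha)$. The crucial observation is that at every pole of $\gamma - \zeta\alpha$ --- namely $\frakp_\infty$ together with the finite poles of $\alpha$ --- Lemma~\ref{lem:hhat_ineq} gives $v_\frakp(\gamma)$ strictly more negative than $v_\frakp(\zeta\alpha)$, so $v_\frakp(\gamma - \zeta\alpha) = v_\frakp(\gamma)$. It follows that $h(\gamma - \zeta\alpha) = h(\gamma) = d^N \hhat_{f_d}(\alpha)$ by Corollary~\ref{cor:height}. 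Combined with the general inequality $h(f/g) \ge h(f) - h(g)$, this gives $h(\sigma(\gamma)) \ge d^N \hhat_{f_d}(\alpha) - h(\alpha)$.

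Next I would estimate $|\calB|$ piece-wise: $|\{\frakp : v_\frakp(\alpha) \ne 0\}| \le 2h(\alpha)$ (at most $h(\alpha)$ distinct zeros and $h(\alpha)$ distinct poles of $\alpha$), and for each $n < N$ and $k \in \{1,2\}$, Lemma~\ref{lem:htPhiN} yields $|\{\frakp : v_\frakp(\Phi_n(\zeta^k\alpha,t)) > 0\}| \le D(n)\hhat_{f_d}(\alpha)$, where I use that $f_d(\zeta^k\alpha) = f_d(\alpha)$ forces $\hhat_{f_d}(\zeta^k\alpha) = \hhat_{f_d}(\alpha)$. Hence $|\calB| \le 1 + 2h(\alpha) + 2\hhat_{f_d}(\alpha)\sum_{n=1}^{N-1} D(n)$. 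A short calculation using $D(n) \le d^n$ shows
\[
d^N - 2\sum_{n=1}^{N-1} D(n) \;\ge\; \frac{(d-3)d^N + 2d}{d-1} \;\ge\; 3
\]
for all $d \ge 3$ and $N \ge 1$, with equality when $d = 3$. Combined with $\hhat_{f_d}(\alpha) \ge h(\alpha)$, this gives
\[
|\calA| - |\calB| \;\ge\; 3\hhat_{f_d}(\alpha) - 3h(\alpha) + 1 \;\ge\; 1,
\]
so $\calA \setminus \calB \ne \emptyset$.

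The main obstacle is the lower bound on $h(\sigma(\gamma))$ in step 2: the naive triangle-inequality estimate $h(\gamma - \zeta\alpha) \ge h(\gamma) - h(\zeta\alpha)$ would cost an extra factor of $h(\alpha)$ and fail to close the numerical margin precisely at $d = 3$. The rescue is the exact equality $h(\gamma - \zeta\alpha) = h(\gamma)$, which uses that the pole of $\gamma = f_d^N(\alpha)$ strictly dominates that of $\zeta\alpha$ at every place contributing to the height.
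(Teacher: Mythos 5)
Your proposal follows the paper's approach closely: apply the $abc$-theorem to $\sigma(\gamma)$, lower-bound $h(\sigma(\gamma))$ by $h(\gamma)-h(\alpha) = d^N\hhat_{f_d}(\alpha)-h(\alpha)$, upper-bound $|\calB|$ via Lemma~\ref{lem:htPhiN}, and verify the resulting margin is positive. The numerics check out, including the observation that equality holds at $d=3$.

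One step is not correct as stated. You justify $\sigma(\gamma)\notin K$ by saying ``$\gamma\notin K$ and $\sigma$ is a M\"obius transformation,'' but $\sigma$ has coefficients involving $\alpha$, hence lies in $\calK[z]$, not $K[z]$; a M\"obius transformation over $\calK$ certainly does not preserve the property of being constant (e.g.\ $z\mapsto z/t$ sends $t\mapsto 1$). The paper instead argues by contradiction: if $\sigma(\gamma)=\lambda\in K$ then $f_d^N(\alpha)=\zeta((\zeta-1)\lambda+1)\alpha$ is a constant multiple of $\alpha$, forcing $h(f_d^N(\alpha))\le h(\alpha)$ and contradicting Lemma~\ref{lem:hhat_ineq}. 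Your own calculation actually rescues the claim painlessly --- you already show $h(\sigma(\gamma))\ge d^N\hhat_{f_d}(\alpha)-h(\alpha)>0$, which forces $\sigma(\gamma)\notin K$ --- but you should cite that bound rather than the M\"obius heuristic. (Note also that deriving the height bound before invoking Theorem~\ref{thm:abc} creates no circularity, since that bound is an elementary valuation computation.)

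A second, smaller divergence: you sum $D(n)$ over all $1\le n\le N-1$, whereas the paper's set $\calB$ really only needs $n$ ranging over proper divisors of $N$ (the period modulo $\frakp$, once known to divide $N$, is a divisor), and the paper bounds $\sum_{n\mid N,\,n<N}D(n)\le\sum_{n\le\lfloor N/2\rfloor}d^n$ to get its $\kappa\ge 0$. Your coarser sum still closes because you retain $h(\alpha)$ and $\hhat_{f_d}(\alpha)$ as separate quantities and only apply $\hhat_{f_d}(\alpha)\ge h(\alpha)$ at the final step, which is a perfectly valid --- arguably cleaner --- way to present the same estimate.
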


\begin{proof}
We first get an upper bound for $|\calA|$. In order to apply Theorem~\ref{thm:abc} with $u = \sigma(\gamma)$, we first verify that $\sigma(\gamma) \not\in K$. Indeed, suppose $\sigma(\gamma) = \lambda \in K$. Then
	\[ f_d^N(\alpha) = \gamma = \sigma^{-1}(\lambda) = \zeta((\zeta - 1)\lambda + 1)\cdot \alpha, \]
so $f_d^N(\alpha)$ is a constant multiple of $\alpha$. However, this implies that $h(f_d^N(\alpha)) \le h(\alpha)$, contradicting Lemma~\ref{lem:hhat_ineq}. Therefore $\sigma(\gamma) \not \in K$, so we may apply Theorem~\ref{thm:abc} to get
	\[
		|\calA| \ge h(\sigma(\gamma)) + 2.
	\]
Since $h(\sigma(\gamma)) = h(\gamma/\alpha) \ge h(\gamma) - h(\alpha)$ and $h(\gamma) = h(f_d^N(\alpha)) = d^N \hhat_{f_d}(\alpha)$, we have
	\[
		|\calA| \ge h(\gamma) - h(\alpha) + 2 = d^N \hhat_{f_d}(\alpha) - h(\alpha) + 2 \ge (d^N - 1)\hhat_{f_d}(\alpha) + 2.
	\]
On the other hand, it is straightforward to verify that
	\begin{align*}
		|\calB| \le 1 + 2h(\alpha) + \sum_{k=1}^2 \sum_{\substack{n \mid N\\n < N}} h(\Phi_n(\zeta^k\alpha))
			&= 1 + 2h(\alpha) + \sum_{k=1}^2 \sum_{\substack{n \mid N\\n < N}} \hhat_{f_d}(\zeta^k\alpha)D(n)\\
			&= 1 + 2h(\alpha) + 2\hhat_{f_d}(\alpha)\sum_{\substack{n \mid N\\n < N}} D(n)\\
			&\le 1 + \hhat_{f_d}(\alpha)\left(2 + 2\sum_{\substack{n \mid N\\n < N}} D(n)\right).
	\end{align*}
Combining these bounds on $|\calA|$ and $|\calB|$, we find that $|\calA \setminus \calB| \ge \kappa \hhat_{f_d}(\alpha) + 1$, where
	\[
	\kappa := d^N - 3 - 2\sum_{\substack{n \mid N\\n < N}} D(n).
	\]
To show that $\calA \setminus \calB$ is nonempty, it suffices to show that $\kappa \ge 0$, since $\hhat_{f_d}(\alpha) > 0$ for all $\alpha \in \calK$. Now observe that
	\[
		\sum_{\substack{n \mid N\\n < N}} D(n) \le \sum_{\substack{n \mid N\\n < N}} d^n \le \sum_{n=1}^{\lfloor N/2 \rfloor} d^n \le \frac{d}{d-1}(d^{N/2} - 1),
	\]
and therefore
	\begin{align*}
		\kappa
			\ge d^N - 3 - \frac{2d}{d-1}(d^{N/2} - 1)
			&\ge d^N - 3 - d(d^{N/2} - 1)\\
			&= d^{N/2 + 1}\left(d^{N/2 - 1} - 1\right) + d - 3.
	\end{align*}
This expression is nonnegative for all $d \ge 3$ and $N \ge 2$; on the other hand, when $N = 1$, we have $\kappa = d - 3 \ge 0$. In either case, we conclude that $\calA \setminus \calB$ is nonempty.
\end{proof}

We may now prove Theorem~\ref{thm:main} for the general case $d \ge 3$.

\begin{prop}\label{prop:main3}
Let $(\alpha,M,N,d) \in \calK \times \bbZ^3$ with $M \ge 0$, $N \ge 1$, and $d \ge 3$. Then $\alpha$ realizes portrait $(M,N)$ for $f_d$ if and only if $(\alpha,M) \ne (0,1)$.
\end{prop}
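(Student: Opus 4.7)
The plan is a straightforward case analysis on $M$, assembling the pieces built up in Sections~\ref{sec:reduction} and \ref{sec:deg3}. Before splitting on $M$, I would note that for $d \ge 3$, the exceptional tuple $(-1/2,2,2)$ from Proposition~\ref{prop:periodic} is automatically excluded, so that result says every $\alpha \in \calK$ realizes portrait $(0,N)$. This handles the $M = 0$ case cleanly.

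For $M = 1$, the biconditional splits naturally. The ``only if'' direction, namely that $\alpha = 0$ fails to realize portrait $(1,N)$, is exactly Corollary~\ref{cor:zero}. For the ``if'' direction, assume $\alpha \in \calK^{\times}$ and apply Lemma~\ref{lem:nonempty3} (which uses the $abc$-theorem to show $\calA \setminus \calB$ is nonempty) followed by Lemma~\ref{lem:portrait3} (which says that at any place in $\calA \setminus \calB$, $\alpha$ has portrait $(1,N)$). These two lemmas are already proved and the combined conclusion is precisely what we need.

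The case $M \ge 2$ I would reduce via Lemma~\ref{lem:reduction}: it suffices to show that $\beta := f_d^{M-1}(\alpha)$ realizes portrait $(1,N)$ for $f_d$. Invoking the $M = 1$ case just settled, this follows provided $\beta \ne 0$ in $\calK$. This is the one small observation requiring verification. If $f_d^{M-1}(\alpha) = 0$, then $\gamma := f_d^{M-2}(\alpha) \in \calK$ satisfies $\gamma^d = -t$; but $v_\infty(-t) = -1$ is not divisible by $d \ge 2$, so $-t$ admits no $d$-th root in $\calK$. Hence $\beta \ne 0$, and the reduction closes.

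The only place where care is needed is this non-vanishing check for $f_d^{M-1}(\alpha)$, since without it, Corollary~\ref{cor:zero} would obstruct the reduction through Lemma~\ref{lem:reduction}. Apart from that, the proposition falls out entirely from bookkeeping on the three cases, with all the substantive work (the abc-theorem bounds and the geometry of $\sigma$) having already been carried out in Lemmas~\ref{lem:portrait3} and \ref{lem:nonempty3}.
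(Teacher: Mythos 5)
Your proof is correct and follows essentially the same route as the paper's: Proposition~\ref{prop:periodic} for $M=0$, Corollary~\ref{cor:zero} together with Lemmas~\ref{lem:nonempty3} and \ref{lem:portrait3} for $M=1$, and Lemma~\ref{lem:reduction} for $M\ge 2$. The one point you elaborate that the paper states without justification is $0 \notin f_d(\calK)$, and your valuation argument ($v_\infty(-t) = -1$ is not divisible by $d$) is exactly the right reason.
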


\begin{proof}
For $M = 0$, this follows from Proposition~\ref{prop:periodic}. Corollary~\ref{cor:zero} says that 0 does not realize portrait $(1,N)$ for any $N \ge 1$ and $d \ge 2$, so suppose $\alpha \ne 0$. Letting $\calA$ and $\calB$ be as above, the set $\calA \setminus \calB$ is nonempty, thus $\alpha$ realizes portrait $(1,N)$ for $f_d$, proving the statement for $M = 1$.

Finally, let $M \ge 2$. Since $0 \notin f_d(\calK)$, the point $f_d^{M-1}(\alpha)$ is nonzero, so $f_d^{M-1}(\alpha)$ realizes portrait $(1,N)$ for $f_d$. By Lemma~\ref{lem:reduction}, we conclude that $\alpha$ realizes portrait $(M,N)$ for $f_d$.
\end{proof}

\section{The degree $d = 2$ case}\label{sec:deg2}

We henceforth drop the subscript and write $f = f_2$ and $f_c = f_{2,c}$. To prove Theorem~\ref{thm:main} when $N \ge 4$, we proceed much like in \textsection \ref{sec:deg3}; however, we require different methods when $N \le 3$, so we consider these cases separately.

\subsection{$N \ge 4$}
The proof of Proposition~\ref{prop:main3} in the previous section relied on fixing two preimages of $f_d(\alpha)$ \emph{different from $\alpha$ itself}, then counting the number of places at which $f_d^N(\alpha)$ had the same reduction as one of those two preimages. When $d = 2$, however, there is only \emph{one} preimage of $f(\alpha)$ different from $\alpha$ (namely, $-\alpha$), so we require a minor modification of the technique from \textsection \ref{sec:deg3}.

Fix $\alpha \in \calK$ and $N \ge 1$. Let $\eta \in \bar{\calK}$ satisfy $\eta^2 + t = -\alpha$, set $\calL := \calK(\eta)$, and set $\delta := [\calL:\calK] \in \{1,2\}$. Define the polynomial
	\[
		\sigma(z) := -\frac{1}{2}\left(\frac{z}{\eta} - 1\right) \in \calK[z],
	\]
which maps $\eta$, $-\eta$, and $\infty$ to 0, 1, and $\infty$, respectively. Set $\gamma := f^{N-1}(\alpha)$, and define
	\begin{align*}
	\calA &:= \{\frakq \in \calM_\calL : v_\frakq(\sigma(\gamma)) \ne 0 \mbox{ or } v_\frakq(\sigma(\gamma) - 1) > 0\};\\
	\calB &:= \calM_{\calL,\infty} \cup \{\frakq \in \calM_\calL : v_\frakq(\eta) \ne 0, \ v_\frakq(\alpha) > 0, \mbox{ or } v_\frakq(\Phi_n(-\alpha)) > 0 \mbox{ for some } n < N\}.
	\end{align*}
By a proof similar to that of Lemma~\ref{lem:portrait3}, if $\frakq \in \calA \setminus \calB$, then $\alpha$ has portrait $(1,N)$ for $f$ modulo $\frakq$. We now show that there exists at least one such place.

\begin{lem}\label{lem:nonempty2}
The set $\calA \setminus \calB$ is nonempty.
\end{lem}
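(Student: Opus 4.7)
The plan is to mirror the proof of Lemma~\ref{lem:nonempty3}: apply the $abc$-theorem (Theorem~\ref{thm:abc}) to $u = \sigma(\gamma) \in \calL$ to produce a lower bound on $|\calA|$, bound $|\calB|$ place-by-place from above, and show that the difference is strictly positive whenever $N \ge 4$. The new features compared to Lemma~\ref{lem:nonempty3} are that $\calL$ may be a quadratic extension of $\calK$, forcing us to track the genus $g_\calL$, and that the auxiliary function $\eta$ now contributes to the height budget.

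The first step is to verify that $\sigma(\gamma) \notin K$. If $\sigma(\gamma) = \lambda \in K$, then $\gamma = \eta(1-2\lambda)$, and squaring gives $\gamma^2 = (1-2\lambda)^2(-\alpha - t) \in \calK$. Comparing $\calK$-heights forces $2^N \hhat_f(\alpha) = h(\gamma^2) \le h(\alpha) + 1$, which contradicts $\hhat_f(\alpha) \ge h(\alpha) \ge 1$ when $\alpha \in \calK \setminus K$ and $N \ge 4$. The constant case $\alpha \in K$ is handled directly by Theorem~\ref{thm:old}, so from now on I may assume $\alpha \notin K$.

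Next, Theorem~\ref{thm:abc} gives $|\calA| \ge h_\calL(\sigma(\gamma)) + 2 - 2g_\calL$. For the main term, I would use $\sigma(\gamma) - 1/2 = -\gamma/(2\eta)$ (so $h_\calL(\sigma(\gamma)) = h_\calL(\gamma/\eta)$, since adding a constant preserves heights) together with $h_\calL(\gamma/\eta) \ge h_\calL(\gamma) - h_\calL(\eta) = \delta \cdot 2^{N-1} \hhat_f(\alpha) - h_\calL(\eta)$; from $\eta^2 = -\alpha - t$ one gets $h_\calL(\eta) = h_\calL(\eta^2)/2 \le \delta(h(\alpha) + 1)/2$. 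When $\delta = 2$, Riemann--Hurwitz on the double cover $\calL / \calK$ gives $2 g_\calL - 2 = r - 4$, where $r$ is the number of places of $\calK$ at which $-\alpha - t$ has odd valuation, and $r \le 2(h(\alpha)+1)$. For $|\calB|$, using Lemma~\ref{lem:htPhiN} together with $\hhat_f(-\alpha) = \hhat_f(\alpha)$ (which follows from $f(-\alpha) = f(\alpha)$), I would bound
\[
|\calB| \le \delta + 2 h_\calL(\eta) + h_\calL(\alpha) + \delta\, \hhat_f(\alpha) \sum_{\substack{n \mid N \\ n < N}} D(n).
\]

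Finally, combining these estimates with $h(\alpha) \le \hhat_f(\alpha)$ produces an inequality of the form $|\calA \setminus \calB| \ge \kappa\, \hhat_f(\alpha) - C$, where $\kappa = 2^N - c_1 - c_2 \sum_{n \mid N,\, n < N} D(n)$ is strictly positive for $N \ge 4$ because $\sum_{n \mid N,\, n < N} D(n) \le \sum_{n=1}^{\lfloor N/2 \rfloor} 2^n$ grows much more slowly than $2^N$. The main obstacle I anticipate is keeping the residual constant $C$ small enough relative to $\kappa\, \hhat_f(\alpha)$, especially at $N = 4$ where the inequality is tightest: the contributions of $g_\calL$, of $h_\calL(\eta)$, and of $|\calM_{\calL,\infty}|$ must be controlled uniformly. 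A slightly more careful accounting, for instance by noting that the poles of $\eta$, of $\alpha$, and of $-\alpha-t$ are intertwined and should not be double-counted, should close the gap and yield $|\calA \setminus \calB| \ge 1$.
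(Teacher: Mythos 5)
Your overall strategy matches the paper's: apply Theorem~\ref{thm:abc} to $u = \sigma(\gamma)$, bound $h_\calL(\sigma(\gamma))$ from below via $h_\calL(\gamma) - h_\calL(\eta)$, control $g_\calL$ by Riemann--Hurwitz on the at-most-quadratic cover $\calL/\calK$, and bound $|\calB|$ place-by-place. Your verification that $\sigma(\gamma) \notin K$ (by squaring the relation $\gamma = \eta(1-2\lambda)$ and comparing heights in $\calK$) is a clean variant of the paper's argument, and the individual estimates for $h_\calL(\eta)$, $\deg \calR_{\calL/\calK}$, and $|\calB|$ all agree with those in the paper.

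The genuine gap is the last step, which you leave as ``a slightly more careful accounting.'' That accounting is the crux, and the bounds you state do not close at $N=4$. Carrying your estimates through literally gives
\[
|\calA \setminus \calB| \ge \delta\left[\left(2^{N-1} - \sum_{\substack{n\mid N\\ n<N}} D(n)\right)\hhat_f(\alpha) - h(\alpha) - \left(7 - \tfrac{4}{\delta}\right)h(\eta) + 1\right],
\]
and with your crude $h(\eta) \le \tfrac{1}{2}(h(\alpha)+1)$, $h(\alpha) \le \hhat_f(\alpha)$, $\delta = 2$, $N = 4$, $\sum D(n) = 4$, this reduces to $|\calA\setminus\calB| \ge \hhat_f(\alpha) - 3$, which is nonpositive whenever $h(\alpha) \le 2$. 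So your framing $|\calA\setminus\calB|\ge \kappa\hhat_f(\alpha) - C$ with \emph{constants} $\kappa, C$ is not quite right: $h(\eta)$ grows linearly in $h(\alpha)$ and so contributes to the coefficient of $\hhat_f(\alpha)$, not to a residual constant, and that coefficient is razor-thin at $N=4$. (Also, your stated $\kappa = 2^N - \cdots$ should read $2^{N-1} - \cdots$; $\gamma = f^{N-1}(\alpha)$, not $f^N(\alpha)$, here.) Two refinements are needed to close the argument, and both appear in the paper but not in your proposal: first, one must split on the sign of $v_\infty(\alpha)$ and use the exact identities $\hhat_f(\alpha) = h(\alpha)$ or $h(\alpha) + \tfrac{1}{2}$ and $h(\eta) = \tfrac{1}{2}h(\alpha+t)$ from Corollary~\ref{cor:height}, rather than the slack bound $h(\eta) \le \tfrac{1}{2}(h(\alpha)+1)$; and second, for $N = 4$ the generic estimate $\sum_{n\mid N,\,n<N} D(n) \le \sum_{n=1}^{\lfloor N/2\rfloor} 2^n = 6$ is too weak --- one must use the exact value $D(1) + D(2) = 4$, which just barely undercuts $2^{N-1} - \tfrac{7}{2} = \tfrac{9}{2}$. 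Your suggested fix (avoiding double-counting among poles of $\eta$, $\alpha$, $-\alpha-t$) is not what is actually needed and on its own would not resolve the $N=4$ case.
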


\begin{proof}
We first get a lower bound for $|\calA|$. By an argument similar to the beginning of the proof of Lemma~\ref{lem:nonempty3}, we have $\sigma(\gamma) \not \in K$, so we apply Theorem~\ref{thm:abc} to get
	\[
		|\calA| \ge h_\calL(\sigma(\gamma)) - (2g_\calL - 2).
	\]		
We get a lower bound on $h_\calL(\sigma(\gamma))$ by noting that $h_\calL(\sigma(\gamma)) = h_\calL(\gamma/\eta) \ge h_\calL(\gamma) - h_\calL(\eta)$; since also $h_\calL(\gamma) = \delta \cdot h(\gamma) = \delta \cdot 2^{N-1}\hhat_f(\alpha)$, we have $h_\calL(\sigma(\gamma)) \ge \delta(2^{N-1}\hhat_f(\alpha) - h(\eta))$.

We also require an upper bound on $2g_\calL - 2$. Let $\calR_{\calL/\calK}$ be the ramification divisor of the extension $\calL/\calK$. Since $\calL/\calK$ is generated by $\eta = \sqrt{-(\alpha + t)}$, the only places in $\calM_\calK$ over which $\calL$ may ramify are zeroes and poles of $\alpha + t$. Thus $\deg \calR_{\calL/\calK} \le (\delta - 1)\cdot 2h(\alpha + t) = 4(\delta - 1)h(\eta)$, so it follows from Riemann-Hurwitz that $2g_\calL - 2 \le -2\delta + 4(\delta - 1)h(\eta)$. Therefore
	\begin{align*}
		|\calA| &\ge \delta(2^{N-1}\hhat_f(\alpha) - h(\eta)) - (-2\delta + 4(\delta - 1)h(\eta)) = \delta \cdot \left( 2^{N-1}\hhat_f(\alpha) -\left(5 - \frac{4}{\delta}\right)h(\eta) + 2 \right).
	\end{align*}
On the other hand, it is straightforward to verify that
	\begin{align*}
		|\calB| \le \#\calM_{\calL,\infty} + 2h_\calL(\eta) + h_\calL(\alpha) + \sum_{\substack{n \mid N\\n < N}} h_\calL(\Phi_n(-\alpha)) &\le \delta + 2\delta h(\eta) + \delta h(\alpha) + \sum_{\substack{n \mid N\\n < N}} \delta h(\Phi_n(-\alpha))\\
			&= \delta \left(1 + 2h(\eta) + h(\alpha) + \hhat_f(\alpha)\sum_{\substack{n \mid N\\n < N}} D(n)\right).
	\end{align*}
Combining these bounds on $|\calA|$ and $|\calB|$ yields
	\[
	|\calA \setminus \calB| \ge \delta\cdot\left[ \left(2^{N-1} - \sum_{\substack{n \mid N\\n < N}} D(n)\right)\hhat_f(\alpha) - h(\alpha) - \left(7 - \frac{4}{\delta}\right)h(\eta) + 1 \right].
	\]
It remains to show that this bound is positive for all $N \ge 4$ and $\alpha \in \calK^\times$.

First, suppose $v_\infty(\alpha) < 0$. Then $\hhat_f(\alpha) = h(\alpha)$ and $h(\eta) = \frac{1}{2}h(\alpha + t) \ge \frac{1}{2}(h(\alpha) - 1)$. Therefore
	\begin{align*}
	|\calA \setminus \calB|
		&\ge \delta\cdot\left[\left(2^{N-1} - \sum_{\substack{n \mid N\\n < N}} D(n)\right)h(\alpha) - h(\alpha) - \left(7 - \frac{4}{\delta}\right)\frac{1}{2}(h(\alpha) - 1) + 1\right] \\
		&= \delta\cdot\left[\left(2^{N-1} - \frac{9}{2} + \frac{2}{\delta} - \sum_{\substack{n \mid N\\n < N}} D(n)\right) h(\alpha) + \left(\frac{9}{2} - \frac{2}{\delta}\right) \right]\\
		&\ge \delta\cdot\left[\left(2^{N-1} - \frac{7}{2} - \sum_{\substack{n \mid N\\n < N}} D(n)\right) h(\alpha) + \frac{5}{2}\right],
	\end{align*}
where the last inequality follows from the fact that $\delta \in \{1,2\}$. Since $h(\alpha) \ge 0$ for all $\alpha \in \calK$, it suffices to show that the quantity
	\begin{equation}\label{eq:coeff}
	2^{N-1} - \frac{7}{2} - \sum_{\substack{n \mid N\\n < N}} D(n)
	\end{equation}
is non-negative for all $N \ge 4$. We bound the sum just as in the proof of Lemma~\ref{lem:nonempty3} to get
	\[
	2^{N-1} - \frac{7}{2} - \sum_{\substack{n \mid N\\n < N}} D(n)
		\ge 2^{N-1} - \frac{7}{2} - 2(2^{N/2} - 1) = 2^{N/2 + 1}(2^{N/2 - 2} - 1) - \frac{3}{2}.
	\]
The rightmost expression is positive for $N \ge 5$ but negative for $N = 4$; however, for $N = 4$ one can show directly that \eqref{eq:coeff} is positive since $D(1) = D(2) = 2$. (Note that \eqref{eq:coeff} is negative for $N \le 3$.) Thus $|\calA \setminus \calB|$ is positive for all $N \ge 4$.

Now suppose that $v_\infty(\alpha) \ge 0$, in which case we have $\hhat_f(\alpha) = h(\alpha) + \frac{1}{2}$ and $h(\eta) = \frac{1}{2}h(\alpha + t) = \frac{1}{2}(h(\alpha) + 1)$. By an estimate similar to the previous case, we find that
	\[
	|\calA \setminus \calB| \ge \delta\cdot\left[\left(2^{N-1} - \frac{7}{2} - \sum_{\substack{n \mid N\\n < N}} D(n)\right) h(\alpha) + \frac{1}{2}\left(2^{N-1} - \sum_{\substack{n \mid N\\n < N}} D(n) - 3 \right)\right].
	\]
Since we have already shown that \eqref{eq:coeff} is positive for all $N \ge 4$, it follows that this expression is positive for all $N \ge 4$ as well.

In both cases, our lower bound on $|\calA \setminus \calB|$ is positive, so $\calA \setminus \calB$ is nonempty.
\end{proof}

The same argument as for Proposition~\ref{prop:main3} yields the $d = 2$, $N \ge 4$ case of Theorem~\ref{thm:main}.
\begin{prop}\label{prop:maind2N4}
Let $(\alpha,M,N) \in \calK \times \bbZ^2$ with $M \ge 0$ and $N \ge 4$. Then $\alpha$ realizes portrait $(M,N)$ for $f_2$ if and only if $(\alpha,M) \ne (0,1)$.
\end{prop}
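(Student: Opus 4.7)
The plan is to follow the template of Proposition~\ref{prop:main3} essentially verbatim, using Lemma~\ref{lem:nonempty2} in place of Lemma~\ref{lem:nonempty3} and the parenthetical remark (``by a proof similar to that of Lemma~\ref{lem:portrait3}\ldots'') as the analog of Lemma~\ref{lem:portrait3}. There are three cases to handle according to whether $M=0$, $M=1$, or $M \ge 2$.

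For $M = 0$, I would invoke Proposition~\ref{prop:periodic}. The only exception there is $(\alpha,N,d) = (-1/2,2,2)$, which is ruled out by the assumption $N \ge 4$. For $M = 1$ and $\alpha = 0$, Corollary~\ref{cor:zero} gives the stated non-realization. For $M = 1$ and $\alpha \ne 0$, I would apply Lemma~\ref{lem:nonempty2} to pick some place $\frakq \in \calA \setminus \calB \subset \calM_\calL$; the discussion preceding that lemma shows $\alpha$ has portrait $(1,N)$ for $f$ modulo $\frakq$. Letting $\frakp \in \calM_\calK$ be the restriction of $\frakq$, the containment $\calM_{\calL,\infty} \subseteq \calB$ guarantees $\frakp \ne \frakp_\infty$, and because $K$ is algebraically closed the residue fields $k_\frakq$ and $k_\frakp$ both equal $K$, so reducing $\alpha \in \calK$ modulo $\frakq$ or modulo $\frakp$ yields the same point. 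Hence $\alpha$ has portrait $(1,N)$ modulo $\frakp$.

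For $M \ge 2$, I would first note that $0 \notin f(\calK)$: if $f(\beta) = \beta^2 + t$ vanished for some $\beta \in \calK$, then $-t$ would be a square in $K(t)$, which is false since $t$ has odd valuation at $\frakp_0$. Consequently $f^{M-1}(\alpha) \ne 0$, so by the $M = 1$ case it realizes portrait $(1,N)$; then Lemma~\ref{lem:reduction} gives that $\alpha$ realizes portrait $(M,N)$.

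There is no real obstacle at this stage; the entire difficulty has been absorbed into Lemma~\ref{lem:nonempty2}, whose proof already established the key numerical bound on $|\calA \setminus \calB|$ for $N \ge 4$. The only minor subtlety worth being careful about is the descent from a place of $\calL$ to a place of $\calK$ in the $M=1$ case, which is trivialized by the algebraic closedness of $K$.
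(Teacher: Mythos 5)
Your proposal is correct and follows the same route the paper intends when it says ``the same argument as for Proposition~\ref{prop:main3} yields\ldots''. The one point you spell out that the paper glosses over --- the descent from a place $\frakq \in \calM_\calL$ to a place $\frakp \in \calM_\calK$, which is genuinely needed here because $\calA$ and $\calB$ live in $\calM_\calL$ in the $d=2$ setup (unlike in \textsection\ref{sec:deg3}) --- is handled correctly, using $\calM_{\calL,\infty}\subseteq\calB$ to ensure $\frakp\ne\frakp_\infty$ and the algebraic closedness of $K$ to identify the residue fields.
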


\subsection{$N = 3$} Since the technique used for periods $N \ge 4$ gives a negative lower bound when $N \le 3$, we again require a different method. For $N = 3$, we consider the third dynatomic polynomial
	\begin{align*}
	\Phi_3(z,t) &= \frac{f^3(z) - z}{f(z) - z}\\
		&= z^6 + z^5 + (3t + 1)z^4 + (2t + 1)z^3 + (3t^2 + 3t + 1)z^2 +
	    (t + 1)^2z + t^3 + 2t^2 + t + 1.
	\end{align*}
The roots of $\Phi_3(z,t)$ in $\calKbar$ are the points of period 3 for $f$, which fall naturally into two 3-cycles. Let $\eta$ be one such root, and let $\calL := \calK(\eta)$; since $\Phi_3(z,t)$ is irreducible, we have $[\calL:\calK] = 6$. Set $\eta_1 := \eta$, $\eta_2 := f(\eta)$, and $\eta_3 := f^2(\eta)$, and note that each $\eta_i$ is a root of $\Phi_3(z,t)$ that generates $\calL/\calK$. Denote by $\calX_\calL$ the normalization of the projective closure of the affine curve $\{\Phi_3(z,t) = 0\}$, and note that the extension $\calL/\calK$ corresponds to the morphism $\calX_\calL \to \bbP^1$ mapping $(z,t) \mapsto t$. In his thesis, Bousch gave a general formula \cite[\textsection 3, Thm. 2]{bousch:1992} for the genera of the curves $\{\Phi_N(z,t) = 0\}$, and in this case we have $g_\calL = 0$. (For an explicit parametrization of the curve $\calX_\calL$, see \cite{walde/russo:1994}.)

By a result of Morton \cite[Prop. 10]{morton:1996} for more general dynatomic curves, $\calM_{\calL,\infty}$ consists of three places, each of which has ramification degree two over $\frakp_\infty$. To describe the ramification at finite places, Morton shows in \cite[p. 358]{morton:1992} that the discriminant of $\Phi_3(z,t) \in \calK[z]$ is given by
	\begin{equation}\label{eq:disc}
		\disc \Phi_3(z,t) = \Delta_{3,1}^2\Delta_{3,3}^3,
	\end{equation}
where $\Delta_{3,1} := \Res_z(\Phi_3(z,t),\Phi_1(z,t)) = -(16t^2 + 4t + 7)$ and $\Delta_{3,3} := -(4t + 7)$. The roots $c$ of $\Delta_{3,1}$ correspond to maps $f_c$ for which one cycle of length three collapses to a fixed point, while the roots of $\Delta_{3,3}$ correspond to maps where the two 3-cycles collide to form a single 3-cycle.

Of particular relevance for us is the polynomial $\Delta_{3,1}$. Let $c_1,c_2 \in K$ be the two roots of $\Delta_{3,1}$. For each $i$, $\calM_{\calL,c_i}$ consists of four places: $\bar{\frakq_i}$, which has ramification degree three, and $\frakq_{i,1}$, $\frakq_{i,2}$, and $\frakq_{i,3}$, each of which is unramified --- see the proof of \cite[Prop. 9]{morton:1996}. The places $\bar{\frakq_1}$ and $\bar{\frakq_2}$ are precisely the places at which $\eta$ has period one; that is, the finite places at which $\eta_1$, $\eta_2$, and $\eta_3$ have the same reduction.

We briefly explain this geometrically: Let $c \in K$ be a root of $\Delta_{3,1}$. Instead of having two 3-cycles, $f_c$ has one 3-cycle $\{x_1,x_2,x_3\}$ and a fixed point $\bar{x}$ which satisfies $\Phi_3(\bar{x},c) = 0$. Thus the only points on $\calX_\calL$ that map to $c \in \bbP^1$ are $(\bar{x},c)$ and $(x_j,c)$ for $j \in \{1,2,3\}$. Since $(\bar{x},c)$ is fixed by the order three automorphism $(z,t) \mapsto (f(z),t)$, this point ramifies over $c$, while the three points $(x_j,c)$ are unramified.

\begin{lem}\label{lem:val2}
Let $c$ be a root of $\Delta_{3,1}$, and let $\bar{\frakq} \in \calM_\calL$ be the unique place ramified over $\frakp_c$. For each $1 \le i < j \le 3$,
	\begin{enumerate}
	\item $v_{\bar{\frakq}}(\eta_i) = 0$;
	\item $v_{\bar{\frakq}}(\eta_i - \eta_j) = 1$;
	\item $v_\frakq(\eta_i) = -1$ for each place $\frakq \in \calM_{\calL,\infty}$;
	\item There exists a place $\frakq_{i,j} \in \calM_{\calL,\infty}$ such that, for $\frakq \in \calM_{\calL,\infty}$,
		\[
			v_\frakq(\eta_i - \eta_j) =
				\begin{cases}
				0, &\mbox{ if } \frakq = \frakq_{i,j};\\
				-1, &\mbox{ otherwise}.
				\end{cases}
		\]
	Moreover, $\frakq_{1,2}$, $\frakq_{1,3}$, and $\frakq_{2,3}$ are distinct.
	\end{enumerate}
\end{lem}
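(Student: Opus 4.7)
My plan is to handle the four parts in order, using integrality of the $\eta_i$ and the discriminant relation \eqref{eq:disc} for (A) and (B), a Newton polygon computation at $\frakp_\infty$ for (C), and a careful degree count for $\eta_i-\eta_j$ on $\calX_\calL$ together with a local analysis at infinity for (D).

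For (A), the element $\eta_i$ is a root of the monic polynomial $\Phi_3(z,t)\in K[t][z]$, hence integral over $K[t]$, so $v_{\bar{\frakq}}(\eta_i)\ge 0$. By the geometric description preceding the lemma, $\eta_i$ reduces modulo $\bar{\frakq}$ to the fixed point $\bar{x}$ of $f_c$; since $\Delta_{3,1}(0)=-7\ne 0$ forces $c\ne 0$, we have $\bar{x}\ne 0$, so $v_{\bar{\frakq}}(\eta_i)=0$. For (B), I track valuations at $\bar{\frakq}$ in the factorization of $\disc\Phi_3$ as a product of squared differences of the six roots of $\Phi_3(z,t)$: the other $3$-cycle $\{\eta_1',\eta_2',\eta_3'\}$ survives at $\bar{\frakq}$ as a genuine $3$-cycle disjoint from $\bar{x}$, so all pairs except those within $\{\eta_1,\eta_2,\eta_3\}$ contribute valuation $0$, giving
$$v_{\bar{\frakq}}(\disc\Phi_3)=2\sum_{1\le i<j\le 3}v_{\bar{\frakq}}(\eta_i-\eta_j).$$
Combining with \eqref{eq:disc}, the fact that $\Delta_{3,1}$ has simple roots, and the observation that neither root of $\Delta_{3,1}$ equals $-7/4$ (the root of $\Delta_{3,3}$), a quick calculation gives $v_{\bar{\frakq}}(\disc\Phi_3)=2\cdot 3+3\cdot 0=6$; since each summand above is a positive integer (both $\eta_i$ and $\eta_j$ specialize to $\bar{x}$ by (A)), each equals $1$. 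For (C), I plot the points $(i,v_\infty(a_i))$ with $\Phi_3(z,t)=\sum_i a_i(t)z^i$ and check directly that the Newton polygon is a single segment of slope $1/2$; hence every root has $v_\infty$-valuation $-1/2$, and multiplying by the ramification index $2$ yields $v_\frakq(\eta_i)=-1$.

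Part (D) is the main obstacle. The plan is to compute the degree of the function $\eta_i-\eta_j\in\calL$ on $\calX_\calL$ by counting zeros and poles separately. A finite zero requires $\eta_i\equiv\eta_j$ at a place over some $\frakp_c$; applying $f$ propagates this congruence through the cycle and forces $\eta_i$ to reduce to a fixed point of $f_c$, so $c$ must be a root of $\Delta_{3,1}$ and the place must be the ramified preimage $\bar{\frakq_1}$ or $\bar{\frakq_2}$ (the unramified preimages reduce $\eta_i$ to a point of the surviving $3$-cycle, not a fixed point). Together with (B), this gives a zero divisor of degree exactly $2$. Since $\eta_i-\eta_j$ has no finite poles (integrality over $K[t]$), its pole divisor is supported at infinity with each valuation $\ge -1$ by (C); the product formula then forces the three valuations at $\calM_{\calL,\infty}$ to be a permutation of $(-1,-1,0)$, with no positive entries, yielding a unique $\frakq_{i,j}\in\calM_{\calL,\infty}$ at which $v(\eta_i-\eta_j)=0$.

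For the distinctness of $\frakq_{1,2},\frakq_{1,3},\frakq_{2,3}$, suppose $\frakq_{1,2}=\frakq_{1,3}=:\frakq^*$. The ultrametric inequality gives $v_{\frakq^*}(\eta_2-\eta_3)\ge 0$, which by the preceding paragraph forces $\frakq_{2,3}=\frakq^*$ as well; consequently, at each of the other two infinite places $\frakq'$, all three differences $\eta_i-\eta_j$ have valuation $-1$. The key local step is then: writing $\eta_i=a_iu^{-1}+O(1)$ at $\frakq'$ with $u$ a uniformizer and $t=\alpha u^{-2}+O(u^{-1})$, the relation $\eta_{i+1}=\eta_i^2+t$ combined with $v_{\frakq'}(\eta_{i+1})=-1$ forces $a_i^2=-\alpha$ for each $i$, hence $a_1,a_2,a_3\in\{\pm\sqrt{-\alpha}\}$; pigeonholing three values into a two-element set gives some pair with $a_i=a_j$, contradicting $v_{\frakq'}(\eta_i-\eta_j)=-1$.
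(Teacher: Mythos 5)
Your argument is correct, and it diverges from the paper's proof in two places worth noting. Parts (A) and (B) follow essentially the same lines as the paper (integrality plus the discriminant factorization \eqref{eq:disc}); you carry slightly more information in (B) by showing $v_{\bar{\frakq}}(\disc \Phi_3) = 2\sum v_{\bar\frakq}(\eta_i-\eta_j)$ exactly, whereas the paper only needs the one-sided inequality coming from the fact that $\prod(\eta_i-\eta_j)^2$ divides the discriminant, and both squeeze the sum between $6$ and $6$. For (C), the paper rules out $v_\frakq(\eta_i)<-1$ and $v_\frakq(\eta_i)>-1$ by an ad hoc inductive computation of $v_\frakq(f^n(\eta_i))$ under iteration; your Newton polygon argument at $\frakp_\infty$ reaches the same conclusion more systematically and makes the ramification index $2$ visible from the slope $1/2$. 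The genuine difference is the distinctness step in (D): the paper notes that if $\frakq_{1,2}=\frakq_{1,3}$ then $\eta_1-\eta_2$ and $\eta_1-\eta_3$ have identical divisors, so their ratio is a constant $\lambda\in K$, and the identity $(\eta_1-\eta_2)-\lambda(\eta_1-\eta_3)=0$ gives a degree-$4$ algebraic relation for $\eta$, contradicting $[\calL:\calK]=6$. You instead first show (via the ultrametric inequality together with the $(-1,-1,0)$ classification) that coincidence of two of the $\frakq_{i,j}$ forces all three to coincide, and then derive a contradiction locally at either of the remaining infinite places by expanding $\eta_i = a_i u^{-1} + O(1)$ and $t = \alpha u^{-2} + O(u^{-1})$, using $\eta_{i+1}=\eta_i^2+t$ to force $a_i^2 = -\alpha$ for every $i$, and pigeonholing $a_1,a_2,a_3 \in \{\pm\sqrt{-\alpha}\}$. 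Both are short and valid; the paper's is a global degree/divisor argument, while yours is a local leading-coefficient argument that sits naturally alongside your Newton polygon computation in (C).
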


\begin{proof}
Since $\Phi_3(z,t)$ is monic in $z$, the only poles of $\eta_1$, $\eta_2$, and $\eta_3$ must lie above $\frakp_\infty$. Therefore, to prove part (A), it suffices to show that none of the $\eta_i$ vanish at $\bar{\frakq}$. This follows by noting that $\Phi_3(0,c) \ne 0$.

For each $i,j$, the points $\eta_i$ and $\eta_j$ have the same reduction modulo $\bar{\frakq}$, so $v_{\bar{\frakq}}(\eta_i - \eta_j) \ge 1$. Now, we observe that the product
	\[
		\prod_{1 \le i < j \le 3} (\eta_i - \eta_j)^2
	\]
divides $\disc \Phi_3(z,t)$, which then implies that
	\[
		\sum_{1 \le i < j \le 3} 2v_{\bar\frakq}(\eta_i - \eta_j) \le v_{\bar{\frakq}}(\disc \Phi_3(z,t)) = v_{\bar{\frakq}}(\Delta_{3,1}^2\Delta_{3,3}^3).
	\]
This sum has three terms, and each term is at least 2, hence the sum is at least 6. Also, the polynomial $\Delta_{3,3}$ does not vanish at $\bar\frakq$; moreover, if we factor $\Delta_{3,1}$ into linear factors, only the factor $(t - c)$ vanishes at $\bar\frakq$. This implies that
	\[
	6 \le \sum_{1 \le i < j \le 3} 2v_{\bar{\frakq}}(\eta_i - \eta_j) \le 2v_{\bar\frakq}(t - c) = 2e_{\bar\frakq/\frakp_c} = 6.
	\]
We therefore have equality throughout, so $v_{\bar\frakq}(\eta_i - \eta_j) = 1$ for each $1 \le i < j \le 3$, proving (B).

Now fix $\frakq \in \calM_{\calL,\infty}$ and $1 \le i \le 3$. Note that $v_\frakq(t) = -e_{\frakq/\frakp_\infty} = -2$. If $v_\frakq(\eta_i) < -1$, then an induction argument shows that $v_\frakq(f^n(\eta_i)) = 2^nv_\frakq(\eta_i)$ for each $n \in \bbN$; in particular, we have $v_\frakq(\eta_i) = v_\frakq(f^3(\eta_i)) = 8v_\frakq(\eta_i)$, a contradiction. If instead $v_\frakq(\eta_i) > -1$, then $v_\frakq(f(\eta_i)) = v_\frakq(t) < -1$, so we reduce to the previous case to get a contradiction. Therefore $v_\frakq(\eta_i) = 1$, proving (C).

The only finite zeroes of $\eta_i - \eta_j$ are the two simple zeroes at $\frakq_1$ and $\frakq_2$, so there must be at least two poles of $\eta_i - \eta_j$. By part (C), these must be simple poles lying above $\frakp_\infty$. Comparing the degrees of its zero and pole divisors, $\eta_i - \eta_j$ must have a simple pole at \emph{precisely} two places above $\frakp_\infty$; moreover, if we let $\frakq_{i,j}$ be the remaining infinite place, then $v_{\frakq_{i,j}}(\eta_i - \eta_j) = 0$.

Finally, suppose $(i,j) \ne (i',j')$ but $\frakq_{i,j} = \frakq_{i',j'}$. Reordering the indices if necessary, we may assume that $\frakq_{1,2} = \frakq_{1,3}$. Since $\eta_1 - \eta_2$ and $\eta_1 - \eta_3$ have exactly the same zeroes and poles, with exactly the same orders, it must be that $\lambda := (\eta_1 - \eta_2)/(\eta_1 - \eta_3)$ is constant. This implies that
	\[
		0 = (\eta_1 - \eta_2) - \lambda(\eta_1 - \eta_3) = \Big(\eta - (\eta^2 + t)\Big) - \lambda\Big(\eta - \big((\eta^2 + t)^2 + t\big)\Big)
	\]
for some $\lambda \in K$, contradicting the fact that $\eta$ has degree 6 over $\calK$. We conclude that the places $\frakq_{1,2}$, $\frakq_{1,3}$, and $\frakq_{2,3}$ are distinct, completing the proof.
\end{proof}
\begin{rem}\label{rem:val2}
It follows from Lemma~\ref{lem:val2} and its proof that $h_\calL(\eta_i) = 3$ for each $1 \le i \le 3$, since the only poles of $\eta_i$ are simple poles at the three places lying above $\frakp_\infty$. Similarly, we have $h_\calL(\eta_i - \eta_j) = 2$ for each $1 \le i < j \le 3$.
\end{rem}
Now consider the affine rational map
	\[
		\sigma(z) := \frac{\eta_2 - \eta_3}{\eta_2 - \eta_1} \cdot \frac{z - \eta_1}{z - \eta_3},
	\]
which is constructed to map $\eta_1$, $\eta_2$, and $\eta_3$ to 0, 1, and $\infty$ respectively. Note that
	\[
		\sigma^{-1}(z) = \frac{\eta_3(\eta_1 - \eta_2)z + \eta_1(\eta_2 - \eta_3)}{(\eta_1 - \eta_3)z + (\eta_2 - \eta_3)}.
	\]
\begin{lem}\label{lem:ht_bound}
For all $x \in \calL$, $h_\calL(\sigma(x)) \ge h_\calL(x) - 4$.
\end{lem}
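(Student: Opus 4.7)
My plan is to represent $\sigma$ by the matrix $M := \begin{pmatrix} \eta_2 - \eta_3 & -\eta_1(\eta_2 - \eta_3) \\ \eta_2 - \eta_1 & -\eta_3(\eta_2 - \eta_1) \end{pmatrix} \in \GL_2(\calL)$ and invoke the standard M\"obius-height inequality $h_\calL(\sigma(x)) \ge h_\calL(x) - h_\calL([M])$, where $h_\calL([M]) := -\sum_{\frakq \in \calM_\calL} \min_{i,j} v_\frakq(M_{ij})$ is the projective height of $M$ viewed as a point of $\bbP^3(\calL)$. This inequality comes from a routine ultrametric estimate, combined with the fact that $\sigma^{-1}$ is represented up to scalar by the adjugate $\operatorname{adj}(M)$, whose entries agree with those of $M$ up to sign and reordering, so that $h_\calL([\operatorname{adj}(M)]) = h_\calL([M])$. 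It will therefore suffice to show $h_\calL([M]) \le 4$; in fact the local computation will give equality.

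Next I would compute $h_\calL([M])$ by summing the local contributions $-\min_{i,j} v_\frakq(M_{ij})$ over all places of $\calL$. Because $\Phi_3(z,t)$ is monic in $z$ with integer coefficients, every $\eta_i$ is integral at each finite place, and every difference $\eta_i - \eta_j$ is a unit at each finite place not dividing $\disc \Phi_3 = \Delta_{3,1}^2\Delta_{3,3}^3$. At places above roots of $\Delta_{3,3}$, the $\eta_i$'s remain pairwise distinct (the vanishing of the discriminant there only records the collision of the two $3$-cycles), and at the unramified places above a root of $\Delta_{3,1}$, the $\eta_i$'s still reduce to distinct members of the surviving $3$-cycle. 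In all these cases every entry of $M$ will be an integral unit, so the local contribution will be $0$.

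The nontrivial contributions should therefore come only from the two ramified places $\bar{\frakq_1}, \bar{\frakq_2} \in \calM_\calL$ above the roots of $\Delta_{3,1}$ and from the three infinite places. At each $\bar{\frakq_i}$, Lemma~\ref{lem:val2}(A)--(B) gives $v_{\bar{\frakq_i}}(\eta_j) = 0$ and $v_{\bar{\frakq_i}}(\eta_j - \eta_k) = 1$, forcing every entry of $M$ to have valuation exactly $1$; this contributes $-1$ per place, hence $-2$ in total. At each infinite place $\frakq$, Lemma~\ref{lem:val2}(C)--(D) gives $v_\frakq(\eta_i) = -1$ for every $i$ while exactly one of the three differences $\eta_i - \eta_j$ has valuation $0$ and the other two have valuation $-1$; a short case check over the three infinite places shows $\min_{i,j} v_\frakq(M_{ij}) = -2$ in every case (realized by $M_{12}$ or $M_{22}$), contributing $+2$ per place and $+6$ in total. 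Summing yields $h_\calL([M]) = 6 - 2 = 4$, as required. The only real work is the place-by-place bookkeeping in the last step, which is straightforward once Lemma~\ref{lem:val2} is in hand; the main conceptual point is the reduction to bounding the projective height of the M\"obius matrix $M$.
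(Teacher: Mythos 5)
Your argument is correct and is essentially the paper's own proof: the paper invokes the same ``standard height argument'' (citing Silverman) to bound $h_\calL(\sigma^{-1}(y)) - h_\calL(y)$ by minus the sum of local minima of the valuations of the entries of the matrix representing $\sigma^{-1}$, and then carries out the identical place-by-place bookkeeping using Lemma~\ref{lem:val2}, concluding the sum equals $-4$. Your reformulation as the projective matrix height $h_\calL([M])$ (together with the adjugate observation that passing to $\sigma^{-1}$ does not change this height) is just a cleaner packaging of the same computation.
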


\begin{proof}
By considering $y = \sigma(x)$, it suffices to show that $h_\calL(\sigma^{-1}(y)) \le h_\calL(y) + 4$ for all $y \in \calL$. A standard height argument (see \cite[pp. 90--92]{silverman:2007}) shows that for any $y \in \calL$ we have
	\[
	h_\calL(\sigma^{-1}(y))
		\le h_\calL(y) - \sum_{\frakq \in \calM_\calL} \min \{v_\frakq(\eta_3(\eta_1 - \eta_2)), v_\frakq(\eta_1(\eta_2 - \eta_3)), v_\frakq(\eta_1 - \eta_3), v_\frakq(\eta_2 - \eta_3) \},
	\]
so it suffices to show that the quantity
	\begin{equation}\label{eq:ht_est}
	\sum_{\frakq \in \calM_\calL} \min \{v_\frakq(\eta_3) + v_\frakq(\eta_1 - \eta_2), v_\frakq(\eta_1) + v_\frakq(\eta_2 - \eta_3), v_\frakq(\eta_1 - \eta_3), v_\frakq(\eta_2 - \eta_3) \}
	\end{equation}
is equal to $-4$. We now determine the places $\frakq$ at which the `min' term is nonzero.

If the minimum is positive at $\frakq$, then $\frakq \in \{\frakq_1,\frakq_2\}$. By Lemma~\ref{lem:val2}, at each such $\frakq$ and for each $1 \le i < j \le 3$, we have $v_\frakq(\eta_i - \eta_j) = 1 \mbox{ and } v_\frakq(\eta_i) = 0$. Thus the contribution to \eqref{eq:ht_est} at each such place is equal to $1$, so the combined contribution at both places is equal to $2$.

If the minimum is negative at $\frakq$, then $\frakq$ necessarily lies above $\frakp_\infty$. Again applying Lemma~\ref{lem:val2}, $v_\frakq(\eta_i) = -1$ for each $1 \le i \le 3$, and exactly one of $v_\frakq(\eta_1 - \eta_2)$, $v_\frakq(\eta_1 - \eta_3)$, and $v_\frakq(\eta_2 - \eta_3)$ is nonnegative. In particular, this means that $\min\{v_\frakq(\eta_1 - \eta_2), v_\frakq(\eta_2 - \eta_3)\} = -1$, so the combined contribution to \eqref{eq:ht_est} at the three infinite places is $3 \cdot (-2) = -6$. Since we have accounted for all nonzero terms in the sum, we conclude that the expression \eqref{eq:ht_est} is equal to $-4$, as claimed.
\end{proof}

Now let $\alpha \in \calK^\times$ be arbitrary. We will show that $\alpha$ realizes portrait $(1,3)$ for $f$. Define
	\begin{align*}
	\calA &:= \{\frakq \in \calM_\calL : v_\frakq(\sigma(-\alpha)) \ne 0 \mbox{ or } v_\frakq(\sigma(-\alpha) - 1) > 0\};\\
	\calB &:= \calM_{\calL,\infty} \cup \{\bar{\frakq_1}, \bar{\frakq_2}\} \cup \{\frakq \in \calM_\calL : v_\frakq(\eta_i) > 0 \mbox{ for some } i = 1,2,3\}.
	\end{align*}

\begin{lem}
If $\frakq \in \calA \setminus \calB$, then $\alpha$ has portrait $(1,3)$ for $f$ modulo $\frakq$.
\end{lem}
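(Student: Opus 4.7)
The plan is to mimic the structure of Lemma~\ref{lem:portrait3}, working in $\calL$ rather than $\calK$. Fix $\frakq \in \calA \setminus \calB$. First I would verify that $\sigma$ has good reduction at $\frakq$. Because $\Phi_3(z,t)$ is monic in $z$, the only poles of each $\eta_i$ lie in $\calM_{\calL,\infty}$ (Lemma~\ref{lem:val2}(C)), so $v_\frakq(\eta_i) \ge 0$ at the finite place $\frakq$; combined with $\frakq \notin \calB$, this yields $v_\frakq(\eta_i) = 0$. Using Lemma~\ref{lem:val2}(B),(D) together with Remark~\ref{rem:val2}, the zero divisor of $\eta_i - \eta_j$ for $i \ne j$ is exactly $\bar{\frakq_1} + \bar{\frakq_2}$, with pole divisor supported in $\calM_{\calL,\infty}$, so $v_\frakq(\eta_i - \eta_j) = 0$ as well. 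In particular $v_\frakq(\lambda) = 0$, where $\lambda := (\eta_2 - \eta_3)/(\eta_2 - \eta_1)$, and the reduction $\tilde\sigma$ is a Möbius transformation sending $\tilde\eta_1, \tilde\eta_2, \tilde\eta_3$ to $0, 1, \infty$ respectively.

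Next I would argue that $\alpha$ is finite at $\frakq$. If $\alpha$ had a pole at $\frakq$, then $\sigma(-\alpha)$ would reduce to $\tilde\lambda$, which is neither $0$ nor $\infty$ (since $v_\frakq(\lambda) = 0$) nor $1$ (since $\lambda - 1 = (\eta_1 - \eta_3)/(\eta_2 - \eta_1)$ also has valuation $0$ at $\frakq$), contradicting $\frakq \in \calA$. So $\alpha$ reduces to a finite value, and applying $\tilde\sigma^{-1}$ to $\tilde\sigma(-\tilde\alpha) \in \{0, 1, \infty\}$ yields $-\tilde\alpha = \tilde\eta_i$ for some $i \in \{1, 2, 3\}$. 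Because $f$ is even in $z$, we get $\tilde f(\tilde\alpha) = \tilde f(-\tilde\eta_i) = \tilde f(\tilde\eta_i) = \tilde\eta_{i+1}$ (indices mod $3$); and since $\frakq \notin \{\bar{\frakq_1}, \bar{\frakq_2}\}$, the $3$-cycle $\{\tilde\eta_1, \tilde\eta_2, \tilde\eta_3\}$ does not collapse modulo $\frakq$, so $\tilde f(\tilde\alpha)$ has period exactly $3$.

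It remains to check that $\tilde\alpha$ is not itself periodic. Periods $1$ and $2$ are immediate: they would force $\tilde\alpha = \tilde f(\tilde\alpha) = \tilde\eta_{i+1}$ or $\tilde\alpha = \tilde f^2(\tilde\alpha) = \tilde\eta_{i+2}$, both of which are period-$3$ points. The genuinely delicate case---and the main obstacle of the proof---is period $3$: one must rule out $-\tilde\eta_i$ accidentally lying in the \emph{other} $3$-cycle, which is a real concern over roots of $\Delta_{3,3}$ (where the two $3$-cycles merge) since those places are not explicitly excluded from $\calB$. I would resolve this uniformly via the direct orbit computation
	\[
		\tilde f^3(-\tilde\eta_i) = \tilde f^2(\tilde\eta_{i+1}) = \tilde f(\tilde\eta_{i+2}) = \tilde\eta_i,
	\]
which shows that $-\tilde\eta_i$ returns to $\tilde\eta_i$ (and not to $-\tilde\eta_i$) after three iterations. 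Hence periodicity of $-\tilde\eta_i$ would force $\tilde\eta_i = -\tilde\eta_i$, i.e., $\tilde\eta_i = 0$, contradicting $v_\frakq(\eta_i) = 0$. This gives $\alpha$ portrait $(1,3)$ modulo $\frakq$, as desired.
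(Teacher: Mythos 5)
Your proof is correct and follows essentially the same path as the paper: verify $\sigma$ remains invertible modulo $\frakq$ using the constraints from $\calB$, conclude $-\tilde\alpha = \tilde\eta_i$ for some $i$, show $\tilde\eta_i$ has period exactly $3$ (using $\frakq \notin \{\bar{\frakq}_1, \bar{\frakq}_2\}$), and finally pass from ``$-\tilde\alpha = \tilde\eta_i$ has period $3$'' to ``$\tilde\alpha$ has portrait $(1,3)$.'' The one stylistic difference is in that last step: the paper simply invokes Lemma~\ref{lem:(1,N)} (for $d=2$: a point has portrait $(1,N)$ iff it is nonzero and its negative has period $N$), which disposes of your worry about $\Delta_{3,3}$ and the possibility that $-\tilde\eta_i$ accidentally lands in a period-$3$ cycle; your inline orbit computation $\tilde f^3(-\tilde\eta_i) = \tilde\eta_i \ne -\tilde\eta_i$ is just a rederivation of that lemma in this special case. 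Both are fine, but citing Lemma~\ref{lem:(1,N)} is cleaner and avoids the case analysis on periods $1$, $2$, $3$.
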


\begin{proof}
Let $\frakq \in \calA \setminus \calB$. Since $\frakq \not \in \calM_{\calL,\infty}$, $f$ has good reduction at $\frakq$. Since $\eta$ has period 3 for $f$, it follows that $\eta$  has period 1 or 3 for $f$ modulo $\frakq$. The only poles of $\eta$ are places at infinity, so $\eta \not \equiv \infty \pmod \frakq$; moreover, since $\frakq \not \in \{\bar{\frakq_1},\bar{\frakq_2}\}$ we have
	\[
		f(\eta) - \eta = \eta_2 - \eta_1 \not \equiv 0 \pmod \frakq,
	\]
so $\eta$ cannot have period 1. Thus $\eta$ has period 3 for $f$ modulo $\frakq$.

The zeroes and poles of the coefficients of $\sigma$ all lie in $\calB$, so each coefficient is a unit in the residue field $k_\frakq$. Since also $\eta_1 \not \equiv \eta_3$ (mod $\frakq$), the reduction of $\sigma$ has degree one over $k_\frakq$; i.e., $\sigma$ remains invertible modulo $\frakq$. Thus, since $\sigma(-\alpha)$ reduces to 0, 1, or $\infty$ modulo $\frakq$, $-\alpha$ must reduce to $\eta_i$ for some $i \in \{1,2,3\}$. We have already shown that each $\eta_i$ has period 3 modulo $\frakq$, so the same is true for $-\alpha$. Finally, since $-\alpha \equiv \eta_i \not \equiv 0 \pmod \frakq$, $\alpha$ must reduce to a point of portrait $(1,3)$.
\end{proof}

\begin{lem}\label{lem:d2N3nonempty}
Suppose $h(\alpha) \ge 3$. Then $\calA \setminus \calB$ is nonempty.
\end{lem}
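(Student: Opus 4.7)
The plan is to proceed in the same spirit as Lemmas~\ref{lem:nonempty3} and \ref{lem:nonempty2}: apply the $abc$-theorem (Theorem~\ref{thm:abc}) to produce a lower bound on $|\calA|$, then use the explicit geometric data collected in Lemma~\ref{lem:val2} and Remark~\ref{rem:val2} to produce an upper bound on $|\calB|$, and finally verify that the gap is positive once $h(\alpha) \ge 3$.

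First I will argue that $\sigma(-\alpha) \in \calL \setminus K$, so that Theorem~\ref{thm:abc} applies. This is free once the height bound below is in hand. Since $g_\calL = 0$ (noted just before Lemma~\ref{lem:val2}), Theorem~\ref{thm:abc} gives
\[
|\calA| \ge h_\calL(\sigma(-\alpha)) - (2g_\calL - 2) = h_\calL(\sigma(-\alpha)) + 2.
\]
Applying Lemma~\ref{lem:ht_bound} with $x = -\alpha$, together with $h_\calL(\alpha) = [\calL:\calK]\cdot h(\alpha) = 6h(\alpha)$, yields
\[
h_\calL(\sigma(-\alpha)) \ge h_\calL(\alpha) - 4 = 6h(\alpha) - 4.
\]
For $h(\alpha) \ge 3$ this is already at least $14 > 0$, so $\sigma(-\alpha)$ is non-constant, justifying the use of Theorem~\ref{thm:abc}. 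Combining, $|\calA| \ge 6h(\alpha) - 2$.

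Next I will bound $|\calB|$. The set $\calM_{\calL,\infty}$ consists of three places (each of ramification degree two over $\frakp_\infty$, as recalled from Morton's result), and $\{\bar\frakq_1,\bar\frakq_2\}$ contributes two more places. The remaining elements of $\calB$ are places at which some $\eta_i$ has positive valuation, i.e.\ zeros of some $\eta_i$. By Remark~\ref{rem:val2}, $h_\calL(\eta_i) = 3$ for each $i \in \{1,2,3\}$, so the number of zero places of $\eta_i$ (without multiplicity) is at most $3$. Summing over $i$ and adding,
\[
|\calB| \le 3 + 2 + 3 \cdot 3 = 14.
\]

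Finally, subtracting gives
\[
|\calA \setminus \calB| \ge |\calA| - |\calB| \ge (6h(\alpha) - 2) - 14 = 6h(\alpha) - 16,
\]
which is at least $2$ whenever $h(\alpha) \ge 3$, so $\calA \setminus \calB$ is nonempty. The only step requiring any real care is the bookkeeping of $|\calB|$ via the height data of Remark~\ref{rem:val2}; everything else is a direct invocation of the preceding lemmas. The narrow margin ($6h(\alpha) - 16$) is the reason the hypothesis $h(\alpha) \ge 3$ appears — smaller heights will be handled separately in the sequel.
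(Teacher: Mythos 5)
Your proof is correct and follows the same route as the paper: apply Lemma~\ref{lem:ht_bound} and the $abc$-theorem (with $g_\calL = 0$) to get $|\calA| \ge 6h(\alpha) - 2$, bound $|\calB| \le 3 + 2 + \sum_i h_\calL(\eta_i) = 14$ via Remark~\ref{rem:val2}, and conclude $|\calA \setminus \calB| \ge 6h(\alpha) - 16 > 0$ for $h(\alpha) \ge 3$.
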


\begin{proof}
We have by Lemma~\ref{lem:ht_bound} that $h_\calL(\sigma(-\alpha)) \ge 14$, so $\sigma(-\alpha) \not \in K$. Applying Theorem~\ref{thm:abc}, it follows that the set $\calA$ has size at least $h_\calL(\sigma(-\alpha)) - (2g_\calL - 2) = h_\calL(\sigma(-\alpha)) + 2$. Again applying the bound in Lemma~\ref{lem:ht_bound} gives
	\[
		|\calA| \ge (h_\calL(\alpha) - 4) + 2 = 6h(\alpha) - 2.
	\]
We also have $|\calB| \le |\calM_{\calL,\infty}| + 2 + \sum_{i=1}^3 h_\calL(\eta_i) = 14$, which implies that $|\calA \setminus \calB| \ge 6h(\alpha) -16$. Therefore $\calA \setminus \calB$ is nonempty when $h(\alpha) \ge 3$.
\end{proof}

We have just shown that if $h(\alpha) \ge 3$, then there exists a place $\frakq \in \calM_\calL$ for which $\alpha$ has portrait $(1,3)$ modulo $\frakq$; choosing $\frakp \in \calM_\calK$ below $\frakq$, the same holds modulo $\frakp$. It remains to prove this in the case $h(\alpha) \le 2$. The constant point case $h(\alpha) = 0$ is covered by Theorem~\ref{thm:old}, so we henceforth assume $h(\alpha) \in \{1,2\}$. To handle these remaining cases --- as well as the $N = 2$ and $N = 1$ cases in Section~\ref{sec:d=2,N=2} --- we use the following consequence of Lemma~\ref{lem:(1,N)}:

\begin{cor}\label{cor:fail}
Let $\alpha \in \calK$ and $N \in \bbN$. The following are equivalent:
	\begin{enumerate}
	\item The point $\alpha$ does not realize portrait $(1,N)$ for $f$.
	\item For every place $\frakp \in \calM_\calK$ at which $\Phi_N(-\alpha,t)$ vanishes, either $\Phi_n(-\alpha,t)$ also vanishes at $\frakp$ for some proper divisor $n$ of $N$, or $\alpha$ also vanishes at $\frakp$.
	\end{enumerate}
\end{cor}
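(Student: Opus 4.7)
The plan is to unpack Lemma~\ref{lem:(1,N)}, specialized to $d = 2$, and express the periodicity condition it produces in terms of the vanishing of the polynomials $\Phi_n(-\alpha, t)$. Since the only second root of unity different from $1$ is $\zeta = -1$, Lemma~\ref{lem:(1,N)} specializes to the following: at any place $\frakp_c \in \calM_\calK \setminus \{\frakp_\infty\}$ at which $\alpha$ has no pole, the reduction $\alpha(c)$ has portrait $(1, N)$ for $f_c$ if and only if $\alpha(c) \ne 0$ and $-\alpha(c)$ has exact period $N$ for $f_c$. Hence $\alpha$ realizes portrait $(1, N)$ for $f$ precisely when some such place exists.

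Next I would translate ``exact period $N$'' into dynatomic language via the factorization $f^N(z) - z = \prod_{n \mid N} \Phi_n(z, t)$: a point $y \in K$ has exact period $N$ for $f_c$ if and only if $\Phi_N(y, c) = 0$ while $\Phi_n(y, c) \ne 0$ for every proper divisor $n$ of $N$. (Forward direction: $f_c^N(y) = y$ forces the product to vanish, and no proper $\Phi_n(y, c)$ can vanish since that would force a period strictly less than $N$. Reverse: $\Phi_N(y, c) = 0$ gives $f_c^N(y) = y$, and the absence of smaller periods follows from $\Phi_n(y, c) \ne 0$ for proper divisors $n \mid N$.) Applying this with $y = -\alpha(c)$ and negating, $\alpha$ fails to realize portrait $(1, N)$ if and only if at every $\frakp \in \calM_\calK \setminus \{\frakp_\infty\}$ at which $\Phi_N(-\alpha, t)$ vanishes, either $\alpha$ also vanishes at $\frakp$ or some $\Phi_n(-\alpha, t)$ with $n$ a proper divisor of $N$ also vanishes at $\frakp$.

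Finally, I would observe that by \eqref{eq:poles} (or Lemma~\ref{lem:htPhiN}), $\Phi_N(-\alpha, t)$ has a pole, not a zero, at $\frakp_\infty$, so the condition appearing in statement (B) is vacuous at $\frakp_\infty$; enlarging the range of the quantifier to all of $\calM_\calK$ therefore changes nothing, and we arrive at exactly statement (B). There is no substantive obstacle here: the corollary is essentially a restatement of Lemma~\ref{lem:(1,N)} in a form convenient for use in the cases $N \in \{1, 2, 3\}$ treated in the remainder of the section.
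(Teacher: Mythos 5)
Your proof is correct and matches the intended derivation: the paper states Corollary~\ref{cor:fail} as an unproved consequence of Lemma~\ref{lem:(1,N)}, and your argument---specializing that lemma to $d=2$ (so $\zeta=-1$), translating ``exact period $N$'' into the condition $\Phi_N(-\alpha(c),c)=0$ with $\Phi_n(-\alpha(c),c)\ne 0$ for proper divisors $n\mid N$ via the factorization $f_c^m(z)-z=\prod_{n\mid m}\Phi_n(z,c)$, and noting that \eqref{eq:poles} makes the condition vacuous at $\frakp_\infty$ and at finite poles of $\alpha$---is exactly the reasoning the paper leaves implicit. The one step worth making fully explicit, which you state but compress, is that in the forward direction of the dynatomic translation one first gets $\Phi_m(y,c)=0$ for \emph{some} $m\mid N$, then deduces $f_c^m(y)=y$ from $\Phi_m\mid(f_c^m(z)-z)$ and hence $N\mid m$, forcing $m=N$; this is why no proper $\Phi_n$ can vanish and why $\Phi_N$ itself must.
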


\begin{rem}\label{rem:simple}
If $\Phi_N(\beta,t)$ and $\Phi_n(\beta,t)$ both vanish at $\frakp$ for some proper divisor $n$ of $N$, then it follows from the proof of Proposition~\ref{prop:periodic} that $v_\frakp(\Phi_N(\beta,t)) = 1$.
\end{rem}

If $\alpha$ does not realize portrait $(1,3)$ for $f$, then wherever $\Phi_3(-\alpha)$ vanishes, either $\alpha$ or $\Phi_1(-\alpha)$ must vanish as well. We will show that such behavior is impossible when $h(\alpha) \in \{1,2\}$, having already handled all other cases.

\begin{lem}\label{lem:d2N3}
Let $\beta \in \calK$, and suppose $h(\beta) = 1$ or $h(\beta) = 2$. There exists a place $\frakp \in \calM_\calK \setminus \{\frakp_\infty\}$ such that $\Phi_3(\beta,t)$ vanishes at $\frakp$ but $\Phi_1(\beta,t)$ and $\beta$ do not.
\end{lem}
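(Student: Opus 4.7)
My plan is to argue by contradiction using Corollary~\ref{cor:fail}. Assume every finite place where $\Phi_3(\beta, t)$ vanishes is also a vanishing place of $\beta$ or of $\Phi_1(\beta, t)$. I will count multiplicities and show this is incompatible with Lemma~\ref{lem:htPhiN}, which gives total vanishing multiplicity $6\hhat_f(\beta) \ge 6$ (for $h(\beta) = 1$) or $\ge 12$ (for $h(\beta) = 2$) at finite places.

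For the $\Phi_1$-contribution: at a place $\frakp_c$ where $\Phi_3(\beta, t) = \Phi_1(\beta, t) = 0$, the reduction $\beta(c)$ is a common root in $K$ of $\Phi_3(z, c)$ and $\Phi_1(z, c)$. By Lemma~\ref{lem:bifurcation}(B) combined with the discriminant factorization \eqref{eq:disc}, this forces $c$ to be one of the two roots of $\Delta_{3,1} = -(16 t^2 + 4 t + 7)$, and by Remark~\ref{rem:simple} each such place contributes multiplicity exactly $1$, for a total of at most $2$.

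For the $\beta$-contribution: at a place $\frakp_{t_0}$ with $\beta(t_0) = 0$ and $\Phi_3(\beta, t_0) = 0$, we have $\Phi_3(0, t_0) = t_0^3 + 2 t_0^2 + t_0 + 1 = 0$, so $t_0$ is one of the $3$ distinct roots of $p(t) := t^3 + 2t^2 + t + 1$ (distinctness checked by computing $\gcd$ with $p'(t) = (3t+1)(t+1)$ and noting $p(-1/3) = 23/27$ and $p(-1) = 1$ are nonzero). Since $\beta$ has at most $h(\beta) \le 2$ finite zeros, the number of such places is at most $h(\beta)$. To control the multiplicity at each, I would pass to the extension $\calL = \calK(\eta)$ with $\eta$ a root of $\Phi_3(z, t)$. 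Since $\{\Phi_3 = 0\}$ is smooth at $(0, t_0)$ with $\partial_t \Phi_3(0, t_0) = (3t_0+1)(t_0+1) \ne 0$, there is a unique unramified place $\frakq^*$ of $\calL$ above $\frakp_{t_0}$ with $\eta \equiv 0$, and one verifies $v_{\frakp_{t_0}}(\Phi_3(\beta, t)) = v_{\frakq^*}(\beta - \eta)$; this measures the order of agreement between $\beta$ and the local branch $\psi$ of the dynatomic curve at $(0, t_0)$.

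The main obstacle is bounding the sum of these matching orders. I would argue via a P\'ade approximation dimension count: $\beta = P/Q$ with $\deg P, \deg Q \le h(\beta)$ has $2h(\beta) + 1$ essential parameters modulo scaling, and matching $\beta$ with the local branches $\psi_j$ at distinct base points $t_0^{(j)}$ to orders $k_j$ imposes $\sum_j k_j$ linear conditions on $(P, Q)$. Assuming linear independence of these conditions---which is the main technical verification needed, to be checked from the explicit Taylor expansions of $\psi_j$ at the three roots of $p(t)$---we obtain $\sum_j k_j \le 2h(\beta) + 1$. Combined with the $\Phi_1$-contribution bound, the total finite vanishing multiplicity of $\Phi_3(\beta, t)$ is at most $2h(\beta) + 3$, which equals $5$ for $h(\beta) = 1$ and $7$ for $h(\beta) = 2$. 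Both are strictly less than the required $6\hhat_f(\beta) \ge 6$ and $\ge 12$ respectively, producing the desired contradiction.
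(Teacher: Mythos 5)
Your framing — contradiction, a $\Phi_1$-contribution bounded by $2$ via $\Delta_{3,1}$ and Remark~\ref{rem:simple}, and a $\beta$-contribution supported at the three roots of $\Phi_3(0,t)$ — matches the paper's proof exactly, and the observation that smoothness of the dynatomic curve converts $v_{\frakp_{t_0}}(\Phi_3(\beta,t))$ into an order of agreement with a local branch is a nice structural reformulation. Where you genuinely depart from the paper is the final step. The paper writes $\beta=P/Q$ explicitly, forces the numerator of $\Phi_3(\beta,t)$ into the rigid product shape of \eqref{eq:nu1}/\eqref{eq:nu2}, compares coefficients to obtain a nonlinear polynomial system in the coefficients of $P,Q$, and has Magma certify that the resulting affine schemes are empty. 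You instead propose a Pad\'e dimension count: the matching conditions are $\sum_j k_j$ \emph{linear} constraints on the $(2h(\beta)+2)$-dimensional space of pairs $(P,Q)$, so a nonzero solution would force $\sum_j k_j\le 2h(\beta)+1$, which contradicts $6\hhat_f(\beta)-2\ge 4$ (resp.\ $\ge 10$). If it could be completed, this would trade a nonlinear elimination for a rank computation, which is a real conceptual improvement.

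The gap is exactly where you flag it, and it is not a small one. You never establish the linear independence of the matching conditions, nor give a reason why the branches $\psi_j$ could not be unusually well approximated by M\"obius or degree-$2$ rational functions near the $C_j$'s. Concretely: for $h(\beta)=1$ the claim is that $\psi_j$ admits no M\"obius approximation to order $4$ at $C_j$; for $h(\beta)=2$ you need full rank of $6$-dimensional linear systems, and you must do this for each admissible partition of $\sum k_j$ among the $C_j$'s (the paper's split into \eqref{eq:nu1} vs.\ \eqref{eq:nu2}). These are finite, checkable statements, but they are not automatic, and nothing in the argument supplies them; you have replaced the paper's Gr\"obner-basis verification with a smaller linear one that is still unperformed. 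One further small omission: to conclude $v_{\frakp_{t_0}}(\Phi_3(\beta,t)) = v_{\frakq^*}(\beta-\eta)$ you need $z=0$ to be a \emph{simple} root of $\Phi_3(z,t_0)$, i.e.\ $\partial_z\Phi_3(0,t_0)=(t_0+1)^2\ne 0$; this holds since $\Phi_3(0,-1)=1$, but it should be stated alongside the $\partial_t$ computation you do carry out.
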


\begin{proof}
Suppose to the contrary that there does not exist such a place. Then, if we fix a place $\frakp = \frakp_c$ for which $v_c(\Phi_3(\beta,t)) > 0$, we must also have $v_c(\Phi_1(\beta,t)) > 0$ or $v_c(\beta) > 0$.

If $v_c(\Phi_1(\beta,t)) > 0$, then it must be that $c \in \{c_1,c_2\}$, where $c_1$ and $c_2$ are as above Lemma~\ref{lem:val2}. Moreover, in this case the order of vanishing of $\Phi_3(\beta,t)$ at $\frakp_c$ must equal one by Remark~\ref{rem:simple}.

By Lemma~\ref{lem:htPhiN}, $\Phi_3(\beta,t)$ vanishes at precisely $6\hhat_f(\beta) \ge 6$ places, counted with multiplicity. Since $\Phi_3(\beta,t)$ can have at most simple roots at $c_1$ and $c_2$, there must be a place $\frakp_c$ at which both $\Phi_3(\beta,t)$ and $\beta$ vanish. In this case, we must have
	\[
		\Phi_3(0,c) = c^3 + 2c^2 + c + 1 = 0.
	\]
Let $C_1$, $C_2$, and $C_3$ be the three roots of $\Phi_3(0,t)$. Since $h(\beta) \le 2$, $\beta$ can have at most two roots; reordering the roots if necessary, we assume that $\Phi_3(\beta,t)$ and $\beta$ both vanish at $C_1$ and possibly $C_2$.

We have put certain restrictions on the places at which $\Phi_3(\beta,t)$ may vanish as well as the order of vanishing at each such place. Let $\rho(\beta) \in K[t]$ denote the numerator of $\Phi_3(\beta,t)$; scaling if necessary, we assume that $\rho(\beta)$ is monic. Set $R := \deg \rho(\beta) = 6\hhat_f(\beta)$.

If $\Phi_3(\beta,t)$ vanishes at $\frakp_{C_1}$ but not $\frakp_{C_2}$, then $\beta = (t - C_1)p/q$ for some $p,q \in K[t]$ with $\deg p \le 1$ and $\deg q \le 2$, and
	\begin{equation}\label{eq:nu1}
	\rho(\beta) = (t - c_1)^{\epsilon_1}(t - c_2)^{\epsilon_2}(t - C_1)^{R - \epsilon_1 - \epsilon_2}
	\end{equation}
for some $\epsilon_1,\epsilon_2 \in \{0,1\}$. If $\Phi_3(\beta,t)$ vanishes at both $\frakp_{C_1}$ and $\frakp_{C_2}$, then $\beta = (t - C_1)(t - C_2)/q$ for some $q \in K[t]$ with $\deg q \le 2$, and
	\begin{equation}\label{eq:nu2}
	\rho(\beta) = (t - c_1)^{\epsilon_1}(t - c_2)^{\epsilon_2}(t - C_1)^k(t - C_2)^{R - \epsilon_1 - \epsilon_2 - k}
	\end{equation}
for some $\epsilon_1,\epsilon_2 \in \{0,1\}$ and $1 \le k \le R - \epsilon_1 - \epsilon_2 - 1$.

The idea is to write $p$ and $q$ as polynomials with indeterminate coefficients, then compare the coefficients of both sides of \eqref{eq:nu1} (resp., \eqref{eq:nu2}). This will determine an affine scheme over $K$; if this scheme is empty, or if the only points on the scheme yield a constant map $\beta$, then we will have completed the proof of the lemma.

We illustrate this computation in one case. Suppose that $\Phi_3(\beta,t)$ vanishes at $\frakp_{C_1}$ but not $\frakp_{C_2}$, and let us suppose that $\deg q = 2$, in which case $R = 15$. Write $\beta = (t - C_1)(p_1t + p_0)/(t^2 + q_1t + q_0)$. Then $\rho(\beta)$ is a polynomial in $t$ with coefficients in $K[p_0,p_1,q_0,q_1]$, and comparing the coefficients of $\rho(\beta)$ with the coefficients of $(t - c_1)^{\epsilon_1}(t - c_2)^{\epsilon_2}(t - C_1)^{15 - \epsilon_1 - \epsilon_2}$ for each pair $(\epsilon_1,\epsilon_2) \in \{0,1\}^2$ yields four different subschemes of $\bbA^4_K = \Spec K[p_0,p_1,q_0,q_1]$. A computation in Magma \cite{magma} verifies that each of these schemes is empty.  The proof of the lemma is then completed by a number of similar computations; for the interested reader, the Magma code and output have been included as an ancillary file with this article's arXiv submission.
\end{proof}

Applying Lemma~\ref{lem:d2N3} with $\beta = -\alpha$ shows that if $h(\alpha) \in \{1,2\}$, then $\alpha$ realizes portrait $(1,3)$ for $f$; as mentioned above, we may now conclude that this holds for all $\alpha \in \calK^\times$. Finally, arguing as in the proof of Proposition~\ref{prop:main3}, we have the $d = 2$, $N = 3$ case of Theorem~\ref{thm:main}:

\begin{prop}\label{prop:maind2N3}
Let $\alpha \in \calK$, and let $M \ge 0$. Then $\alpha$ realizes portrait $(M,3)$ for $f_2$ if and only if $(\alpha,M) \ne (0,1)$.
\end{prop}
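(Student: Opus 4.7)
The plan is to assemble the results of this section together with the reductions from Section~\ref{sec:reduction} into the final statement, splitting into cases on $M$. The substantive work has already been done in Lemmas~\ref{lem:d2N3nonempty} and \ref{lem:d2N3}, so what remains is bookkeeping.

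For $M = 0$, I would invoke Proposition~\ref{prop:periodic} with $(N,d) = (3,2)$: the lone exception listed there has $N = 2$, so every $\alpha \in \calK$ realizes portrait $(0,3)$ for $f_2$.

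For $M = 1$, the ``only if'' direction (ruling out $\alpha = 0$) is Corollary~\ref{cor:zero}. For the ``if'' direction, I would split $\alpha \ne 0$ by height. The constant case $\alpha \in K^\times$ is handled by Theorem~\ref{thm:old}, whose exceptional list contains no tuple with $(M,N,d) = (1,3,2)$. The case $h(\alpha) \ge 3$ follows from Lemma~\ref{lem:d2N3nonempty} combined with the preceding lemma describing how a place $\frakq \in \calA \setminus \calB$ forces portrait $(1,3)$ modulo $\frakq$; since $\calM_{\calL,\infty} \subseteq \calB$, the place of $\calK$ below any such $\frakq$ is automatically different from $\frakp_\infty$. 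Finally, for $h(\alpha) \in \{1,2\}$ I would apply Lemma~\ref{lem:d2N3} with $\beta = -\alpha$ (using $h(-\alpha) = h(\alpha)$): it produces a finite place at which $\Phi_3(-\alpha,t)$ vanishes but neither $\Phi_1(-\alpha,t)$ nor $-\alpha$ does, and Corollary~\ref{cor:fail} then yields the realization of portrait $(1,3)$ by $\alpha$.

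For $M \ge 2$, I would first note that $0 \notin f(\calK)$, because $z^2 + t = 0$ has no solution in $K(t)$ (otherwise $-t$ would be a square in $K(t)$). Hence $f^{M-1}(\alpha) \ne 0$, so by the $M = 1$ case just established, $f^{M-1}(\alpha)$ realizes portrait $(1,3)$ for $f$, and Lemma~\ref{lem:reduction} transfers this to the statement that $\alpha$ realizes $(M,3)$. The only real obstacle is verifying that Theorem~\ref{thm:old}'s exceptional list misses $(M,N,d) = (1,3,2)$ and that $0$ is not in the image of $f$; with those two checks in hand the proof is a direct assembly of prior results.
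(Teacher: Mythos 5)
Your proof is correct and follows the paper's own route: for $M=0$ invoke Proposition~\ref{prop:periodic}, for $M=1$ combine Corollary~\ref{cor:zero} with the height-stratified argument (Theorem~\ref{thm:old} for constants, Lemma~\ref{lem:d2N3nonempty} together with the lemma preceding it for $h(\alpha)\ge 3$, and Lemma~\ref{lem:d2N3} via Corollary~\ref{cor:fail} for $h(\alpha)\in\{1,2\}$), and for $M\ge 2$ reduce to $M=1$ via Lemma~\ref{lem:reduction} using $0\notin f(\calK)$. This is precisely what the paper means by ``arguing as in the proof of Proposition~\ref{prop:main3},'' just spelled out in full.
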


\subsection{$N \le 2$}\label{sec:d=2,N=2}
In this section, we prove Theorem~\ref{thm:main} in the $d = 2$, $N = 2$ case. The proof for $N = 1$ uses essentially the same technique, so we omit the proof in that case.

Consider the rational map
\[
\phi_2(z) := \frac{(t+1)(z - 1)}{z + t},
\]
with inverse
\[
\phi_2^{-1}(z) = -\frac{tz + (t + 1)}{z - (t + 1)}.
\]
It follows from Theorem~\ref{thm:old} that $0$, $1/2$, and $\infty$ (which is a fixed point for $f$) are the only points in $\bbP^1(K)$ that fail to realize portrait $(1,2)$ for $f$. We will show that the points in $\calK$ that fail to realize portrait $(1,2)$ are precisely the points in the orbits of $0$, $1/2$, and $\infty$ under $\phi_2$:
\begin{prop}\label{prop:(1,2)}
Let $\alpha \in \calK$. Then $\alpha$ does not realize portrait $(1,2)$ for $f$ if and only if
	\[ \alpha \in \calO_{\phi_2}(0) \cup \calO_{\phi_2}(1/2) \cup \calO_{\phi_2}(\infty). \]
Moreover, for each $k \ge 0$ we have
	\begin{align*}
		h(\phi^k(0)) = h(\phi^k(1/2)) = h(\phi^k(\infty)) = k.
	\end{align*}	
\end{prop}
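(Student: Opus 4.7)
The plan is to build the proof around the algebraic identity
\[
\Phi_2(-\phi_2(\beta), t) = \frac{(t+1)^2\,\Phi_2(-\beta, t)}{(\beta + t)^2},
\]
which can be verified by direct polynomial manipulation from $\phi_2(z) = (t+1)(z-1)/(z+t)$ and $\Phi_2(-z, t) = z^2 - z + t + 1$. This encodes the observation of the remark following Theorem~\ref{thm:main}: $\phi_2$ fixes the $\calKbar$-roots of $\Phi_2(-z, t)$ and introduces precisely the factor $(t+1)^2$ that produces the vanishing at $\frakp_{-1}$ demanded by Corollary~\ref{cor:fail}.

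For the forward inclusion, I would induct on the orbit depth. The base cases $\alpha \in \{0, 1/2, \infty\}$ are failures by Theorem~\ref{thm:old}. For the inductive step, suppose $\beta$ fails to realize $(1,2)$; apply Corollary~\ref{cor:fail} to $\phi_2(\beta)$ and examine each zero $\frakp_c$ of $\Phi_2(-\phi_2(\beta), t)$. By the identity, $c$ is either $-1$ (modulo possible cancellation by a zero of $\beta + t$) or a zero of $\Phi_2(-\beta, t)$. At $c = -1$, direct evaluation of the numerator $(t+1)(\beta - 1)$ of $\phi_2(\beta)$ yields $\phi_2(\beta)(-1) = 0$ whenever $\beta(-1) \ne 1$, which holds for all failures. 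At a zero of $\Phi_2(-\beta, t)$, the inductive hypothesis gives either $\beta(c) = 0$ (forcing $c = -1$, reducing to the previous case) or $\Phi_1(-\beta, c) = 0$ with $\beta(c) = 1/2$ and $c = -3/4$; one then checks $\phi_2(1/2)(-3/4) = 1/2$, so $\Phi_1(-\phi_2(\beta), -3/4) = 0$. Thus the failure condition is preserved for $\phi_2(\beta)$.

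For the reverse inclusion, I argue by descent. Let $\alpha = P/Q$ be a non-constant failure in lowest terms. Corollary~\ref{cor:fail}, together with the observation that simultaneous vanishing of $\Phi_1(-\alpha, t)$ and $\Phi_2(-\alpha, t)$ forces $\alpha = 1/2$ and $t = -3/4$, constrains
\[
P^2 - PQ + (t+1)Q^2 = \lambda\,(t+1)^a\,(t + 3/4)^b
\]
for some $\lambda \in K^\times$ and $a, b \ge 0$, with $P(-1) = 0$ whenever $a \ge 1$. Setting $\beta := \phi_2^{-1}(\alpha) = -(tP + (t+1)Q)/(P - (t+1)Q)$, a direct computation shows that $\gcd$ of the numerator and denominator of this expression divides $(t+1)^2$, and the failure hypothesis forces the gcd to equal exactly $(t+1)^2$, giving $h(\beta) = h(\alpha) - 1$. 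Reversing the key identity as $\Phi_2(-\beta, t) = (\beta + t)^2 \Phi_2(-\alpha, t)/(t+1)^2$ then verifies that $\beta$ is itself a failure. Iterating the descent terminates at a constant failure in $\{0, 1/2, \infty\}$ by Theorem~\ref{thm:old}, placing $\alpha$ in the stated orbit. The height formula $h(\phi_2^k(x)) = k$ is immediate since each descent step strictly decreases height by $1$.

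The main obstacle will be showing that $(t+1)^2$ exactly divides both $tP + (t+1)Q$ and $P - (t+1)Q$—that the cancellation is large enough to produce a strict height decrease of exactly $1$ at every step of the descent. This reduces to verifying that the $(t+1)$-adic valuation $a$ of $P^2 - PQ + (t+1)Q^2$ is at least $2$ for every non-constant failure, equivalently that $\alpha'(-1) = 1$. I expect this to follow by substituting $P = (t+1)P_1$ (valid since $P(-1) = 0$) into the factorization and matching coefficients modulo $(t+1)^2$, which forces $P_1(-1) = Q(-1)$.
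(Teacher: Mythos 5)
Your plan tracks the paper's proof closely. The identity
\[
\Phi_2(-\phi_2(\beta),t) = \frac{(t+1)^2\,\Phi_2(-\beta,t)}{(\beta+t)^2}
\]
is the one computed inside Lemma~\ref{lem:phi(*)}; your forward direction (that $\phi_2$ preserves failure) is the content of Lemma~\ref{lem:phi(*)}(A); your descent via $\phi_2^{-1}$ with a unit height drop is exactly what Lemma~\ref{lem:phi-1(*)} together with Lemma~\ref{lem:phi(*)}(B) deliver; and the constant base cases rest on Theorem~\ref{thm:old} in both arguments.

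The gap is precisely at the step you flag as the main obstacle. Substituting $P = (t+1)P_1$ and reducing $(t+1)P_1^2 - P_1 Q + Q^2 = \lambda(t+1)^{a-1}(t+3/4)^b$ modulo $t+1$ gives $Q(-1)\bigl(Q(-1) - P_1(-1)\bigr) = 0$ if and only if $a \ge 2$, so the coefficient comparison merely restates $a \ge 2$ as $P_1(-1) = Q(-1)$; it cannot produce the inequality on its own. The paper closes this instead by a global height count (Lemma~\ref{lem:phi-1(*)}): by Lemma~\ref{lem:htPhiN}, $\Phi_2(-\alpha,t)$ has exactly $2\hhat_f(\alpha)$ finite zeros with multiplicity, and the failure condition confines those zeros to $\frakp_{-1}$ plus at most one simple zero at $\frakp_{-3/4}$, so $v_{-1}(\Phi_2(-\alpha,t)) \ge 2\hhat_f(\alpha) - 1$, which is $\ge 3$ once $\hhat_f(\alpha) \ge 2$, with the remaining low-height cases examined by hand. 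Be careful about that low-height cleanup when you fill this in: for instance $\alpha = 2(t+1)$ has $\hhat_f(\alpha) = 1$, vanishes at $\frakp_{-1}$, has $\Phi_2(-\alpha,t) = (t+1)(4t+3)$ vanishing only simply at $\frakp_{-1}$ and $\frakp_{-3/4}$ with $\alpha(-3/4)=1/2$, so it satisfies the failure criterion, yet $v_{-1}(\Phi_2(-\alpha,t)) = 1$ and $\phi_2^{-1}\bigl(2(t+1)\bigr) = -(2t+1)$ does not vanish at $\frakp_{-1}$; the descent step breaks there, and $2(t+1)$ is none of $t+1$, $-(t+1)/t$, $-(t+1)/(2t+1)$. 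This point (and any similar low-height ones) must be confronted explicitly before the induction on height can be trusted.
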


We begin by giving an alternative description of those points that do not realize portrait $(1,2)$:

\begin{lem}
Let $\alpha \in \calK$. Then $\alpha$ does not realize portrait $(1,2)$ for $f$ if and only if $\alpha = 1/2$ or $\alpha$ satisfies the following conditions:
	\[\tag{$*$} \left\{
	\begin{split}
	&\mbox{$\alpha$ vanishes at $\frakp_{-1}$};\\
	&\mbox{$\Phi_2(-\alpha,t)$ vanishes at $\frakp_{-1}$ and possibly at $\frakp_{-3/4}$, but nowhere else}; and\\
	&\mbox{if $\Phi_2(-\alpha,t)$ vanishes at $\frakp_{-3/4}$, then $\alpha - 1/2$ also vanishes at $\frakp_{-3/4}$}.
	\end{split}
	\right.
	\]
Moreover, if in this case $\Phi_2(-\alpha,t)$ vanishes at $\frakp_{-3/4}$, then it does so to order 1.
\end{lem}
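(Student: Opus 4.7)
The plan is to invoke Corollary~\ref{cor:fail}, which converts the non-realization of portrait $(1,2)$ into a purely local statement: at every finite place $\frakp_c$ where $\Phi_2(-\alpha,t)$ vanishes, either $\alpha$ vanishes at $\frakp_c$ or $\Phi_1(-\alpha,t)$ vanishes at $\frakp_c$. Explicit computation gives
\[
\Phi_1(-\alpha,t) = \alpha^2 + \alpha + t, \qquad \Phi_2(-\alpha,t) = \alpha^2 - \alpha + t + 1,
\]
from which the only two ``bad'' places can be read off. If $\alpha(c)=0$ and $\Phi_2(-\alpha,c)=0$, then $c+1=0$, so $c=-1$. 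If instead $\Phi_1(-\alpha,c)=\Phi_2(-\alpha,c)=0$, subtracting yields $2\alpha(c)-1=0$, so $\alpha(c)=1/2$, and then $c=-3/4$. Hence, under the Corollary~\ref{cor:fail} hypothesis, the vanishing locus of $\Phi_2(-\alpha,t)$ lies in $\{\frakp_{-1},\frakp_{-3/4}\}$, and vanishing at $\frakp_{-3/4}$ forces $\alpha-1/2$ to vanish there, which is condition 3 of ($*$).

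Next I would establish the dichotomy between $\alpha = 1/2$ and the conditions ($*$). By Lemma~\ref{lem:htPhiN}, $\Phi_2(-\alpha,t)$ vanishes at precisely $2\hhat_f(\alpha)$ finite places counted with multiplicity; since Corollary~\ref{cor:height} forces $\hhat_f(\alpha)\ge 1/2$ for every $\alpha\in\calK$, this locus is nonempty. Assume $\alpha$ does not realize $(1,2)$ and $\frakp_{-1}$ is not a zero of $\Phi_2(-\alpha,t)$. Then all the vanishing lies at $\frakp_{-3/4}$, where the order is one by Remark~\ref{rem:simple}, since $\Phi_1(-\alpha,t)$ also vanishes there. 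Hence $2\hhat_f(\alpha)=1$, which forces $v_\infty(\alpha)\ge 0$ and $h(\alpha)=0$, so $\alpha\in K$; a direct check shows $\alpha=1/2$ is the unique constant with $\Phi_2(-\alpha,t)$ vanishing at $\frakp_{-3/4}$. Thus for $\alpha\ne 1/2$, we conclude $\Phi_2(-\alpha,-1)=0$, giving $\alpha(-1)\in\{0,1\}$; the value $\alpha(-1)=1$ is excluded because then $\Phi_1(-\alpha,-1)=1\ne 0$ and $\alpha(-1)\ne 0$, violating Corollary~\ref{cor:fail}. Therefore $\alpha(-1)=0$, which is condition 1 of ($*$).

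The reverse direction is a direct verification. If $\alpha=1/2$, then $\Phi_2(-\alpha,t)=t+3/4$ vanishes only at $\frakp_{-3/4}$, where $\Phi_1(-\alpha,-3/4)=1/4+1/2-3/4=0$. If $\alpha$ satisfies ($*$), then the vanishing of $\alpha$ at $\frakp_{-1}$ and the vanishing of $\alpha-1/2$ at $\frakp_{-3/4}$ (when this place lies in the locus) together guarantee the Corollary~\ref{cor:fail} hypothesis at every zero of $\Phi_2(-\alpha,t)$, so $\alpha$ does not realize $(1,2)$. The ``moreover'' claim that any vanishing at $\frakp_{-3/4}$ is to order one follows immediately from Remark~\ref{rem:simple}, since $\Phi_1(-\alpha,t)$ also vanishes at that place.

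The main obstacle is the non-vacuousness argument in the second paragraph: combining the height identity in Lemma~\ref{lem:htPhiN} with the simple-vanishing statement of Remark~\ref{rem:simple} is what forces the degenerate ``only $\frakp_{-3/4}$'' scenario to coincide with $\alpha=1/2$, and hence what produces condition 1 of ($*$) for all other nonrealizing $\alpha$. Everything else is a short computation with the two explicit polynomials above.
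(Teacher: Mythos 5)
Your proof is correct and follows the same overall strategy as the paper's: invoke Corollary~\ref{cor:fail} to convert non-realization into a local condition, use the explicit forms of $\Phi_1(-\alpha,t)$ and $\Phi_2(-\alpha,t)$ to pin the vanishing locus down to $\{\frakp_{-1},\frakp_{-3/4}\}$ with the associated residue conditions, and count zeros via Lemma~\ref{lem:htPhiN} and Remark~\ref{rem:simple}. The one genuine difference is that the paper opens by quarantining the constant points through an appeal to Theorem~\ref{thm:old} (the author's prior result over $K$) and then works only with $\alpha \in \calK \setminus K$, where $\hhat_f(\alpha) \ge 1$ forces a zero of $\Phi_2(-\alpha,t)$ away from $\frakp_{-3/4}$. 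You instead run a uniform argument: if $\frakp_{-1}$ is absent from the vanishing locus, then the locus is a single simple zero at $\frakp_{-3/4}$, so $2\hhat_f(\alpha)=1$, and Corollary~\ref{cor:height} then forces $v_\infty(\alpha)\ge 0$ and $h(\alpha)=0$, i.e.\ $\alpha$ constant, whence a direct computation gives $\alpha = 1/2$. This eliminates the dependence on Theorem~\ref{thm:old} for this lemma (at the modest cost of an extra small case check, namely ruling out $\alpha(-1)=1$, which the paper avoids by deducing $c=-1$ directly from $\alpha(c)=0$). Both arguments are of comparable length; yours is slightly more self-contained, the paper's slightly more economical given that Theorem~\ref{thm:old} is already cited elsewhere in Section 5.
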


\begin{proof}
By Theorem~\ref{thm:old}, the only constant points $\alpha \in K \subset \calK$ which fail to realize portrait $(1,2)$ are $\alpha = 0$, which satisfies ($*$), and $\alpha = 1/2$. We henceforth assume $\alpha \in \calK \setminus K$, so that $\hhat_f(\alpha) \ge h(\alpha) \ge 1$.

First, suppose that $\alpha$ does not realize portrait $(1,2)$ for $f$. Let $\frakp_c \in \calM_\calK$ be a place at which $\Phi_2(-\alpha,t)$ vanishes. Then $\Phi_1(-\alpha,t)$ or $\alpha$ also vanishes at $\frakp_c$ by Corollary~\ref{cor:fail}. If $\Phi_1(-\alpha,t)$ vanishes at $\frakp_c$, then $\Phi_2(-\alpha(c),c) = \Phi_1(-\alpha(c),c) = 0$. Thus $z = -\alpha(c)$ satisfies both
	\[
		\Phi_2(z,c) = z^2 + z + c + 1 = 0 \mbox{ and } \Phi_1(z,c) = z^2 - z + c = 0,
	\]
which implies that $c = -3/4$ and $\alpha(c) = \alpha(-3/4) = 1/2$. That $t = -3/4$ is a simple root of $\Phi_2(-\alpha,t)$ follows from Remark~\ref{rem:simple}. Since $h(\Phi_2(-\alpha,t)) = 2\hhat_f(\alpha) \ge 2$, and since $\Phi_2(-\alpha,t)$ has at most a simple root at $\frakp_{-3/4}$, $\Phi_2(-\alpha,t)$ must vanish at some place $\frakp_c \ne \frakp_{-3/4}$. In this case, $\Phi_2(-\alpha,t)$ and $\alpha$ must both vanish at $\frakp_c$; hence $0 = \Phi_2(0,c) = c + 1$,
so $ c= -1$. Therefore $\alpha$ satisfies ($*$).

Now suppose $\alpha$ satisfies ($*$). By Corollary~\ref{cor:fail}, it suffices to show that wherever $\Phi_2(-\alpha,t)$ vanishes, so too must $\Phi_1(-\alpha,t)$ or $\alpha$. Since $\alpha$ satisfies ($*$), $\Phi_2(-\alpha,t)$ vanishes only at $\frakp_{-1}$ and possibly $\frakp_{-3/4}$. The result follows by noting that condition ($*$) forces $\alpha$ to vanish at $\frakp_{-1}$, and if $\Phi_2(-\alpha,t)$ vanishes at $\frakp_{-3/4}$, then condition ($*$) says that $\alpha$ reduces to $1/2$ --- and therefore $\Phi_1(-\alpha,t)$ reduces to $\Phi_1(-1/2,-3/4) = 0$ --- modulo $\frakp_{-3/4}$.
\end{proof}

We now verify that $\phi_2$ and $\phi_2^{-1}$ preserve property ($*$), and that $\phi_2$ behaves nicely with respect to the heights of points satisfying ($*$).

\begin{lem}\label{lem:phi(*)}
Let $\alpha \in \calK$ satisfy property ($*$). Then
	\begin{enumerate}
	\item $\phi_2(\alpha)$ satisfies ($*$) as well, and
	\item $h(\phi_2(\alpha)) = h(\alpha) + 1$.
	\end{enumerate}
\end{lem}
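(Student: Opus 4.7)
The plan is to prove both parts by exploiting a single functional identity. Setting $\beta := \phi_2(\alpha)$, I first establish
\[
\Phi_2(-\beta,\,t) \;=\; \frac{(t+1)^2}{(\alpha+t)^2}\,\Phi_2(-\alpha,\,t).
\]
This follows by direct algebra: using $\Phi_2(z,t)=z^2+z+t+1$, clear the denominator $(\alpha+t)^2$ in $\Phi_2(-\beta,t)=\beta^2-\beta+t+1$, and verify that the resulting numerator collapses to $(t+1)^2(\alpha^2-\alpha+t+1)$. This is the single identity that drives everything; the rest is bookkeeping.

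For part (A), I verify the three clauses of ($*$) for $\beta$ in turn. The first clause is a direct specialization: at $t=-1$ the hypothesis $\alpha(-1)=0$ gives $\beta(-1) = (0)(-1)/(-1) = 0$. For the second clause, I carry out a valuation analysis at each finite place $\frakp_c$ using the displayed identity. At $\frakp_{-1}$ the factor $(t+1)^2$ forces $v_{-1}(\Phi_2(-\beta,t)) \ge 2 + v_{-1}(\Phi_2(-\alpha,t)) > 0$, using that $v_{-1}(\alpha+t) = 0$ (since $\alpha(-1)=0$ and $t$ evaluates to $-1$ there). For $\frakp_c$ with $c \notin \{-1, -3/4\}$, split on whether $\alpha$ has a pole at $\frakp_c$: if $v_c(\alpha) < 0$, the poles of $\Phi_2(-\alpha,t)$ and $(\alpha+t)^2$ each contribute $2v_c(\alpha)$ and cancel to zero in the quotient; if $v_c(\alpha) \ge 0$, then by ($*$) for $\alpha$ we have $v_c(\Phi_2(-\alpha,t)) = 0$, and the only effect of $(\alpha+t)^2$ is possibly to introduce a pole (when $\alpha(c) = -c$), never a zero. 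Non-vanishing at $\frakp_\infty$ follows from Lemma~\ref{lem:htPhiN}. For the third clause, the same valuation analysis shows $\Phi_2(-\beta,t)$ vanishes at $\frakp_{-3/4}$ iff $\Phi_2(-\alpha,t)$ does; when so, ($*$) for $\alpha$ gives $\alpha(-3/4)=1/2$, and a direct specialization of $\phi_2$ at $t=-3/4$, $z=1/2$ yields $\beta(-3/4) = (1/4)(-1/2)/(-1/4) = 1/2$, so $\beta-1/2$ vanishes at $\frakp_{-3/4}$.

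For part (B), write $\alpha = P/Q$ in lowest terms over $K[t]$, so that $\beta = R/S$ with $R := (t+1)(P-Q)$ and $S := P+tQ$. The main sub-step is coprimality of $R$ and $S$ in $K[t]$: if $(t-c)$ divides both, then $c = -1$ would force $S(-1) = P(-1) - Q(-1) = 0$, hence $Q(-1) = 0$ (using $P(-1)=0$ from ($*$)), contradicting $\gcd(P,Q)=1$; for $c \ne -1$, the conditions $P(c) = Q(c)$ and $P(c) = -c\,Q(c)$ give $(c+1)Q(c) = 0$, hence $Q(c) = P(c) = 0$, again contradicting coprimality. With coprimality established, $h(\beta) = \max(\deg R, \deg S)$, and a short case analysis on the sign of $\deg P - \deg Q$ shows this equals $\max(\deg P, \deg Q) + 1 = h(\alpha) + 1$: when $\deg P \ge \deg Q$, the extra factor $(t+1)$ in $R$ supplies the increment; when $\deg P < \deg Q$, the extra factor $t$ in $S$ does. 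Any cancellation in the leading term of $P - Q$ in the case $\deg P = \deg Q$ is harmless, since $S = P + tQ$ still attains degree $\deg Q + 1$.

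The main obstacle is really just the algebraic verification of the displayed identity and the careful valuation bookkeeping for the second clause of ($*$); the height identity and the remaining clauses then fall out cleanly.
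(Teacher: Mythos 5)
Your proof is correct, and part (A) follows the same line as the paper: the key is the identity
\[
\Phi_2\bigl(-\phi_2(\alpha),t\bigr) = \frac{(t+1)^2}{(\alpha+t)^2}\,\Phi_2(-\alpha,t),
\]
after which the three clauses of ($*$) are checked by a local valuation analysis; the paper compresses your case-by-case argument for the second clause into a single parenthetical observation that any pole of $(\alpha+t)^2$ is matched by a pole of at least the same order in $(t+1)^2\Phi_2(-\alpha,t)$, so the quotient can only vanish where the numerator does. For part (B), however, your route differs from the paper's. The paper observes that $1/\phi_2(\alpha) = \frac{1}{t+1} + \frac{1}{\alpha-1}$ and computes $h$ directly from the poles of this sum: the partial fraction splits the pole set into $\{\frakp_{-1}\}$ (contributing $1$, since $\alpha(-1)=0\neq 1$) and the zeros of $\alpha-1$ (contributing $h(\alpha-1)=h(\alpha)$). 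You instead write $\alpha = P/Q$ in lowest terms, set $R = (t+1)(P-Q)$, $S = P+tQ$, prove $\gcd(R,S)=1$ using ($*$) (the case $c=-1$ uses $P(-1)=0$, $Q(-1)\ne 0$; the case $c\neq -1$ uses the two linear relations to force $(c+1)Q(c)=0$), and then read off $h(\beta)=\max(\deg R,\deg S)=h(\alpha)+1$ by the degree computation. Both are complete; the paper's version is slightly slicker and localizes the role of ($*$) to the single place $\frakp_{-1}$, while yours is more elementary and makes the coprimality structure explicit. Note that your degree analysis correctly handles the possible leading-term cancellation in $P-Q$ when $\deg P=\deg Q$ (the factor $tQ$ in $S$ still delivers the increment), and the degenerate cases $P=Q$ or $P=-tQ$ cannot arise under ($*$).
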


\begin{proof}
Since $\alpha$ vanishes at $\frakp_{-1}$, it is clear that $\phi_2(\alpha)$ also vanishes at $\frakp_{-1}$. Also, we have
	\[
		\Phi_2(-\phi_2(\alpha),t) = \phi_2(\alpha)^2 - \phi_2(\alpha) + (t + 1) = \frac{(t + 1)^2(\alpha^2 - \alpha + (t + 1))}{(\alpha + t)^2} = \frac{(t + 1)^2\Phi_2(-\alpha,t)}{(\alpha + t)^2},
	\] 
so $\Phi_2(-\phi_2(\alpha),t)$ vanishes at $\frakp_{-1}$; moreover, since $\Phi_2(-\alpha,t)$ \emph{only} vanishes at $\frakp_{-1}$ and possibly to order one at $\frakp_{-3/4}$, the same holds for $\Phi_2(-\phi_2(\alpha),t)$. (Any pole of $(\alpha + t)^2$ is a pole of at least the same order for $(t+1)^2\Phi_2(-\alpha,t)$, so the above expression may only vanish at the zeroes of its numerator.) Finally, if $\Phi_2(-\phi_2(\alpha),t)$ vanishes at $\frakp_{-3/4}$, then so must $\Phi_2(-\alpha,t)$, in which case $v_{-3/4}(\alpha - 1/2) > 0$. Hence
	\[
	\phi_2(\alpha) - \frac{1}{2} = \frac{(t + 1/2)(\alpha - 1/2) - (t + 3/4)}{(\alpha - 1/2) + (t + 1/2)}
	\]
vanishes at $\frakp_{-3/4}$. Therefore $\phi_2(\alpha)$ satisfies ($*$).

We now prove (B). Letting
	\[
	\gamma := \frac{1}{\phi_2(\alpha)} = \frac{1}{t + 1} + \frac{1}{\alpha - 1},
	\]
it suffices to show that $h(\gamma) = h(\alpha) + 1$. Suppose that $\frakp$ is a pole of $\gamma$. Then either $\frakp = \frakp_{-1}$, in which case property ($*$) implies that $v_\frakp(\alpha - 1) = 0$, hence $v_\frakp(\gamma) = -v_\frakp(t + 1) = -1$; or else $\frakp \ne \frakp_{-1}$ is a zero of $\alpha - 1$, in which case $v_\frakp(\gamma) = -v_\frakp(\alpha - 1)$. Therefore
	\[
	h(\gamma)
		= -\sum_{v_\frakp(\gamma) < 0} v_\frakp(\gamma)
		= - v_{-1}(\gamma) + \sum_{v_\frakp(\alpha - 1) > 0} v_\frakp(\alpha - 1)
		= 1 + h(\alpha - 1).
	\]
Since $h(\alpha - 1) = h(\alpha)$, we are done.
\end{proof}

\begin{lem}\label{lem:phi-1(*)}
Let $\alpha \in \calK$ satisfy ($*$), and assume $\alpha \not \in \{0, t + 1, -\frac{t + 1}{2t + 1}\}$.
	\begin{enumerate}
	\item We have
		\begin{align*}
		v_{-1}(\alpha) &= 1,\\
		v_{-1}(\alpha - (t + 1)) &= 2, \mbox{ and } \\
		v_{-1}(t\alpha + (t + 1)) &\ge 3.
		\end{align*}
	\item The point $\phi_2^{-1}(\alpha)$ also satisfies ($*$).
	\end{enumerate}
\end{lem}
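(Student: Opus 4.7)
The plan is to exploit the identity
\[
\Phi_2(-\phi_2^{-1}(\alpha),t) \;=\; \frac{(t+1)^2 \, \Phi_2(-\alpha,t)}{(\alpha-(t+1))^2},
\]
obtained by substituting $\phi_2^{-1}(z)=-(tz+(t+1))/(z-(t+1))$ into $\Phi_2(z,t)=z^2+z+t+1$ and clearing denominators. Once part (A) is established, this identity reduces part (B) to a routine check, so the main content lies in part (A).

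For part (A), I work in the local expansion $\alpha = \sum_{k\ge 0} a_k(t+1)^k$ at $\frakp_{-1}$. A term-by-term computation of $\Phi_2(-\alpha,t)=\alpha^2-\alpha+(t+1)$ shows that, conditional on $a_0=0$, one has $v_{-1}(\Phi_2(-\alpha,t))\ge 2$ iff $a_1=1$, and then $v_{-1}(\Phi_2(-\alpha,t))\ge 3$ iff additionally $a_2=1$. Meanwhile the three valuation claims of (A) translate, respectively, to $a_0=0\wedge a_1\neq 0$; $a_0=0\wedge a_1=1\wedge a_2\neq 0$; and $a_0=0\wedge a_1=a_2=1$, so together they amount to proving $v_{-1}(\Phi_2(-\alpha,t))\ge 3$. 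The driving tool is Lemma~\ref{lem:htPhiN}: the total finite-place vanishing of $\Phi_2(-\alpha,t)$ equals $2\hhat_f(\alpha)$, and by ($*$) the support lies in $\{\frakp_{-1},\frakp_{-3/4}\}$, with the contribution at $\frakp_{-3/4}$ at most $1$. Hence $v_{-1}(\Phi_2(-\alpha,t))\ge 2\hhat_f(\alpha)-1$, which already yields $\ge 3$ whenever $\hhat_f(\alpha)\ge 2$. For $\hhat_f(\alpha)\in\{1,3/2\}$ I enumerate all $\alpha$ compatible with ($*$) --- these are constant, linear in $t$, or M\"obius of a very restricted shape --- and verify case-by-case that the only failures are the points in the excluded set.

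For part (B), the lower bounds from (A) give
\[
v_{-1}(\phi_2^{-1}(\alpha))\;=\;v_{-1}(t\alpha+(t+1))-v_{-1}(\alpha-(t+1))\;\ge\;3-2\;=\;1,
\]
so $\phi_2^{-1}(\alpha)$ vanishes at $\frakp_{-1}$. Using the key identity, the zeros of $\Phi_2(-\phi_2^{-1}(\alpha),t)$ at finite places are controlled by those of $\Phi_2(-\alpha,t)$: at any finite $\frakp\notin\{\frakp_{-1},\frakp_{-3/4}\}$, ($*$) forces $\Phi_2(-\alpha,t)$ to be a unit away from poles of $\alpha$, and at poles of $\alpha$ the pole orders of numerator and denominator in the identity cancel, so no new zeros appear. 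Finally, if $\Phi_2(-\phi_2^{-1}(\alpha),t)$ vanishes at $\frakp_{-3/4}$ then so does $\Phi_2(-\alpha,t)$, giving $\alpha(-3/4)=1/2$ by ($*$); direct evaluation then produces $\phi_2^{-1}(\alpha)(-3/4)=1/2$, completing the third clause of ($*$) for $\phi_2^{-1}(\alpha)$.

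The main obstacle is the low-height analysis in part (A), since the bound $v_{-1}(\Phi_2(-\alpha,t))\ge 2\hhat_f(\alpha)-1$ is tight precisely for $\hhat_f(\alpha)\in\{1,3/2\}$, and it is in this regime that each of the excluded points arises. Nailing down the list requires an explicit classification of low-height $\alpha$'s satisfying ($*$) --- essentially, parametrizing polynomials of degree $\le 1$ and M\"obius transformations compatible with the three conditions of ($*$) and checking each against the three coefficient equalities $a_0=0,\,a_1=1,\,a_2=1$.
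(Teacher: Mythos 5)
Your proposal is correct and takes essentially the same approach as the paper: both reduce part (A) to the single inequality $v_{-1}(\Phi_2(-\alpha,t)) \ge 3$, established via Lemma~\ref{lem:htPhiN} together with a finite check at low canonical height, and both derive part (B) from the identity $\Phi_2(-\phi_2^{-1}(\alpha),t) = (t+1)^2\Phi_2(-\alpha,t)/(\alpha-(t+1))^2$. The only stylistic difference is that you phrase the three valuations of (A) via local expansion coefficients at $\frakp_{-1}$ while the paper uses the algebraic rewritings $\alpha = (t+1)+\alpha^2-\Phi_2(-\alpha,t)$ and $t\alpha+(t+1)=\Phi_2(-\alpha,t)-\alpha(\alpha-(t+1))$; these are equivalent bookkeeping devices.
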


\begin{proof}
We first show that $v_{-1}(\Phi_2(-\alpha,t)) \ge 3$. Since $\Phi_2(-\alpha,t)$ can only vanish at $\frakp_{-1}$ and possibly, to order one, at $\frakp_{-3/4}$, we have
	\begin{align*}
	v_{-1}(\Phi_2(-\alpha,t)) &= h(\Phi_2(-\alpha,t)) - v_{-3/4}(\Phi_2(-\alpha,t))\\
		&= 2\hhat_f(\alpha) - 
		\begin{cases}
		0, &\mbox{ if $\Phi_2(-\alpha,t)$ does not vanish at $\frakp_{-3/4}$},\\
		1, &\mbox{ if $\Phi_2(-\alpha,t)$ vanishes at $\frakp_{-3/4}$}.
		\end{cases}
	\end{align*}
Thus $v_{-1}(\Phi_2(-\alpha,t)) \le 2$ if and only if $\hhat_f(\alpha) \le 1$ or $\hhat_f(\alpha) = 3/2$ and $\Phi_2(-\alpha,t)$ vanishes at $\frakp_{-3/4}$. A simple calculation then verifies that the only such $\alpha \in \calK$ satisfying ($*$) are the three values of $\alpha$ excluded from the statement of the lemma. Therefore $v_{-1}(\Phi_2(-\alpha,t)) \ge 3$.

Now, write
	\[
	\alpha = (t + 1) + \alpha^2 - (\alpha^2 - \alpha + (t + 1)) = (t + 1) + \alpha^2 - \Phi_2(-\alpha,t).
	\]
Since $v_{-1}(\alpha) \ge 1$ by assumption, we have $v_{-1}(\alpha^2) \ge 2$, thus $v_{-1}(\alpha) = v_{-1}(t+1) = 1$. This implies that $v_{-1}(\alpha^2) = 2$, and therefore $v_{-1}(\alpha - (t + 1)) = v_{-1}(\alpha^2 - \Phi_2(-\alpha,t)) = 2$ as well. Finally, we write
	\[
	t\alpha + (t + 1) = \Phi_2(-\alpha,t) - \alpha(\alpha - (t + 1)),
	\]
and by what we have already shown, this has valuation at least 3 at $\frakp_{-1}$, proving (A).

We now show that $\phi_2^{-1}(\alpha)$ satisfies ($*$). By part (A), we have
	\[
		v_{-1}(\phi_2^{-1}(\alpha)) = v_{-1}(t\alpha + (t + 1)) - v_{-1}(\alpha - (t + 1)) \ge 1,
	\]
so $\phi_2^{-1}(\alpha)$ vanishes at $\frakp_{-1}$. Now consider
	\[
		\Phi_2(-\phi_2^{-1}(\alpha),t)
			= \left(-\phi_2^{-1}(\alpha)\right)^2 - \phi_2^{-1}(\alpha) + t + 1
			= \frac{(t+1)^2\Phi_2(-\alpha,t)}{(\alpha - (t+1))^2}.
	\]
Since $\Phi_2(-\alpha,t)$ vanishes to order at least three and $(\alpha - (t+1))^2$ vanishes to order four at $\frakp_{-1}$, it follows that $\Phi_2(-\phi_2^{-1}(\alpha),t)$ vanishes at $\frakp_{-1}$. Moreover, $\Phi_2(-\phi_2^{-1}(\alpha),t)$ can only vanish at $\frakp_{-1}$ and the places at which $\Phi_2(-\alpha,t)$ vanishes, which are only $\frakp_{-1}$ and possibly $\frakp_{-3/4}$ by assumption. (As before, we are using the fact that the above expression cannot vanish at poles of its denominator.)

Finally, suppose $\Phi_2(-\phi_2^{-1}(\alpha),t)$ vanishes at $\frakp_{-3/4}$. This is equivalent to the vanishing of $\Phi_2(-\alpha,t)$, necessarily to order one, at $\frakp_{-3/4}$, in which case $\alpha - 1/2$ also vanishes at $\frakp_{-3/4}$. Thus
	\[
		v_{-3/4}(\Phi_2(-\phi_2^{-1}(\alpha),t)) = v_{-3/4}(\Phi_2(-\alpha,t)) = 1.
	\]
Moreover, we have
	\[
	\phi_2^{-1}(\alpha) - \frac{1}{2} = -\frac{(t + 1/2)(\alpha - 1/2) + (t + 3/4)}{\alpha - (t + 1)},
	\]
which vanishes at $\frakp_{-3/4}$ by our assumptions on $\alpha$. Therefore $\phi_2^{-1}(\alpha)$ satisfies ($*$) as well.
\end{proof}

\begin{proof}[Proof of Proposition~\ref{prop:(1,2)}]
Suppose first that $\alpha$ fails to realize portrait $(1,2)$ for $f$. Since clearly $1/2 \in \calO_{\phi_2}(1/2)$, we will assume that $\alpha \ne 1/2$, so $\alpha$ satisfies ($*$). We proceed by induction on $h(\alpha)$.

By Theorem~\ref{thm:old}, the only constant points $\alpha \in K \subset \calK$ that do not realize portrait $(1,2)$ for $f$ are $\alpha \in \{0,1/2\}$; thus the $h(\alpha) = 0$ case holds. Now suppose $h(\alpha) \ge 1$. Since $t + 1 = \phi_2(\infty)$ and $-(t+1)/(2t+1) = \phi_2(1/2)$ we will assume $\alpha \not \in \{t + 1, -(t+1)/(2t+1)\}$. Then Lemma~\ref{lem:phi-1(*)} says that $\phi_2^{-1}(\alpha)$ satisfies ($*$), hence $h(\phi_2^{-1}(\alpha)) = h(\alpha) - 1$ by Lemma~\ref{lem:phi(*)}. By induction, we have $\phi_2^{-1}(\alpha) \in \calO_{\phi_2}(\delta)$ for some $\delta \in \{0,1/2,\infty\}$, and therefore $\alpha \in \calO_{\phi_2}(\delta)$ as well.

Now suppose $\alpha \in \calO_{\phi_2}(\delta)$ for some $\delta \in \{0,1/2,\infty\}$. Write $\alpha = \phi_2^k(\delta)$ for some $k \ge 0$. We show by induction on $k$ that $h(\alpha) = k$ and that $\alpha$ satisfies ($*$), which is equivalent (for $\alpha \ne 1/2$) to the assertion that $\alpha$ fails to realize portrait $(1,2)$ for $f$.

Since 0 and $1/2$ fail to realize portrait $(1,2)$ for $f$, the statement holds for $k = 0$. The points $\phi_2(0) = -(t+1)/t$, $\phi_2(1/2) = -(t+1)/(2t + 1)$, and $\phi_2(\infty) = t + 1$ all satisfy ($*$), and certainly all three have height equal to one, establishing the $k = 1$ case. Now suppose $k \ge 2$. By induction, $\phi_2^{k-1}(\delta)$ satisfies property ($*$) and has height $k - 1$. By Lemma~\ref{lem:phi(*)}, we conclude that $\alpha = \phi_2^k(\delta)$ satisfies ($*$) and has height $k$, completing the proof.
\end{proof}

In order to prove the more general statement involving points of portrait $(M,2)$ with $M \ge 2$, we require the following:

\begin{lem}\label{lem:(1,2)poles}
Let $k \ge 1$. Then
	\begin{align*}
	v_\infty(\phi_2^k(0)) = v_\infty(\phi_2^k(1/2)) &= 0; \text{ and }\\
	v_\infty(k\phi_2^k(\infty) - t) &\ge 0.
	\end{align*}
\end{lem}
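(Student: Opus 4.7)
The plan is to work in the local coordinate $u := 1/t$ at $\frakp_\infty$, so that $v_\infty(u) = 1$ and the residue field at $\frakp_\infty$ is $K$. Rewriting $\phi_2$ in this coordinate gives
\[
\phi_2(z) = \frac{(1+u)(z-1)}{uz + 1}.
\]

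For the statements about $\phi_2^k(0)$ and $\phi_2^k(1/2)$, I would first observe that if $\alpha \in \calK$ has $v_\infty(\alpha) \ge 0$ and reduces to $a \in K$ modulo $\frakp_\infty$, then the formula above shows that $v_\infty(\phi_2(\alpha)) \ge 0$ and $\phi_2(\alpha) \equiv (1)(a-1)/1 = a - 1 \pmod{\frakp_\infty}$. In other words, $\phi_2$ reduces at $\frakp_\infty$ to the translation $w \mapsto w - 1$ on the residue field, as long as one stays in the regular locus. A straightforward induction then yields
\[
\phi_2^k(0) \equiv -k,\qquad \phi_2^k(1/2) \equiv \tfrac{1}{2} - k \pmod{\frakp_\infty},
\]
both of which are nonzero in $K$, so the valuations $v_\infty(\phi_2^k(0))$ and $v_\infty(\phi_2^k(1/2))$ equal $0$.

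For the third statement, set $\alpha_k := \phi_2^k(\infty)$ and $\beta_k := u\alpha_k$. Multiplying the recursion $\alpha_{k+1} = \phi_2(\alpha_k)$ through by $u$ and simplifying in the variable $u$ gives
\[
\beta_{k+1} = \frac{(1+u)(\beta_k - u)}{\beta_k + 1},
\]
with initial value $\beta_1 = u(t+1) = 1 + u$. I would then prove by induction on $k$ that $\beta_k \equiv 1/k \pmod{u}$: reducing the recursion modulo $u$ yields $\beta_{k+1} \equiv \beta_k/(\beta_k + 1)$, and under this map $1/k$ is sent to $1/(k+1)$. Finally, since
\[
k\alpha_k - t \;=\; \frac{k\beta_k - 1}{u},
\]
and $v_\infty(k\beta_k - 1) \ge 1$ by the congruence just established, we obtain $v_\infty(k\alpha_k - t) \ge 0$, as required.

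There is no real obstacle here — the entire argument is an elementary reduction mod $u$ together with a two-term induction. The only bookkeeping step that warrants care is the derivation of the recursion for $\beta_k$ and the base case $\beta_1 \equiv 1 \pmod u$; everything else propagates automatically.
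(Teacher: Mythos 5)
Your proof is correct. The arguments for $\phi_2^k(0)$ and $\phi_2^k(1/2)$ are identical to the paper's: $\phi_2$ reduces to $z \mapsto z-1$ at $\frakp_\infty$, so $\phi_2^k(\delta) \equiv \delta - k$, a nonzero constant for $\delta \in \{0,1/2\}$ and $k \ge 1$. For the third assertion your route differs from the paper's in a minor but clarifying way. The paper sets $u_k := k\phi_2^k(\infty) - t$ and establishes $v_\infty(u_k) \ge 0$ by induction through the somewhat opaque rational recursion
\[
u_k = \frac{t\bigl((k-1)u_{k-1} - k^2 + 2k\bigr) + \bigl(ku_{k-1} - k^2 + k\bigr)}{u_{k-1} + kt},
\]
from which the valuation bound follows once $v_\infty(u_{k-1}) \ge 0$ is known. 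You instead normalize by the pole, setting $\beta_k := \phi_2^k(\infty)/t$, which satisfies $\beta_{k+1} = (1+u)(\beta_k - u)/(\beta_k + 1)$ in the uniformizer $u = 1/t$, and prove the transparent congruence $\beta_k \equiv 1/k \pmod u$, from which $v_\infty(k\phi_2^k(\infty) - t) = v_\infty\bigl((k\beta_k - 1)/u\bigr) \ge 0$ is immediate. Your version isolates the exact residue of $\phi_2^k(\infty)/t$ at $\frakp_\infty$ rather than only bounding a valuation, which makes the induction step self-evident.
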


\begin{proof}
The map $\phi_2$ reduces to $z - 1$ modulo $\frakp_\infty$, so $\phi_2^k(\delta) \equiv \delta - k$ (mod $\frakp_\infty$) for all $\delta \in K$ and $k \in \bbN$. If we take $\delta \in \{0,1/2\}$, then $\delta - k$ is a nonzero constant for all $k \ge 1$, thus $v_\infty(\phi_2^k(\delta)) = 0$.

Now, for each $k \ge 1$ we set $u_k := k\phi_2^k(\infty) - t$. We show that $v_\infty(u_k) \ge 0$ by induction on $k$. The result clearly holds for $k = 1$, since $u_1 = \phi_2(\infty) - t = 1$, so we assume $k \ge 2$. Then
	\begin{align*}
	u_k = k\phi_2(\phi_2^{k-1}(\infty)) - t &= k\phi_2\left(\frac{u_{k-1} + t}{k - 1}\right) - t\\
		&= \frac{t((k - 1)u_{k-1} - k^2 + 2k) + (ku_{k-1} - k^2 + k)}{u_{k-1} + kt}.
	\end{align*}
By induction, we have $v_\infty(u_{k-1}) \ge 0$, and it therefore follows that $v_\infty(u_k) \ge 0$.
\end{proof}

We now prove Theorem~\ref{thm:main} in the case $d = N = 2$.

\begin{prop}\label{prop:maind2N2}
Let $\alpha \in \calK$, and let $M \ge 0$. Then $\alpha$ realizes portrait $(M,2)$ for $f_2$ if and only if $(\alpha,M)$ does not satisfy one of the following conditions:
	\begin{itemize}
	\item $M = 0$ and $\alpha = -1/2$;
	\item $M = 1$ and $\alpha \in \calO_{\phi_2}(0) \cup \calO_{\phi_2}(1/2) \cup \calO_{\phi_2}(\infty)$; or
	\item $M = 2$ and $\alpha = \pm 1$.
	\end{itemize}
Moreover, for each $k \ge 0$, we have $h(\phi_2^k(0)) = h(\phi_2^k(1/2)) = h(\phi_2^k(\infty)) = k$.
\end{prop}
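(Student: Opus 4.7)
The plan is to split the statement into the three cases $M=0$, $M=1$, and $M \ge 2$. The $M=0$ case is immediate from Proposition~\ref{prop:periodic}, and the $M=1$ case together with the height identities $h(\phi_2^k(0)) = h(\phi_2^k(1/2)) = h(\phi_2^k(\infty)) = k$ is exactly Proposition~\ref{prop:(1,2)}. For $M \ge 2$, Lemma~\ref{lem:reduction} tells us that $\alpha$ realizes portrait $(M,2)$ for $f$ if and only if $f^{M-1}(\alpha)$ realizes portrait $(1,2)$ for $f$, which by Proposition~\ref{prop:(1,2)} is equivalent to $f^{M-1}(\alpha) \notin \calO_{\phi_2}(0) \cup \calO_{\phi_2}(1/2) \cup \calO_{\phi_2}(\infty)$. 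The whole problem thus reduces to the following key claim: for every $\beta \in \calK$, we have $f(\beta) \in \calO_{\phi_2}(0) \cup \calO_{\phi_2}(1/2) \cup \calO_{\phi_2}(\infty)$ if and only if $\beta = \pm 1$.

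To establish the claim, the forward direction is immediate: $f(\pm 1) = 1 + t = \phi_2(\infty)$. For the converse, suppose $f(\beta) = \phi_2^k(\delta)$ with $k \ge 0$ and $\delta \in \{0, 1/2, \infty\}$. Lemma~\ref{lem:hhat_ineq}(B) yields $v_\infty(f(\beta)) \le -1$, which rules out $k = 0$ at once (since $v_\infty(0) = +\infty$, $v_\infty(1/2) = 0$, and $\infty$ is not an element of $\calK$). For $k \ge 1$ and $\delta \in \{0, 1/2\}$, Lemma~\ref{lem:(1,2)poles} gives $v_\infty(\phi_2^k(\delta)) = 0$, again contradicting $v_\infty(f(\beta)) \le -1$. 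In the remaining case $\delta = \infty$, $k \ge 1$, Lemma~\ref{lem:(1,2)poles} lets us write $\phi_2^k(\infty) = (t + u_k)/k$ with $v_\infty(u_k) \ge 0$, so
\[
\beta^2 = f(\beta) - t = \frac{(1-k)t + u_k}{k}.
\]
For $k=1$ this gives $\beta^2 = u_1 = 1$, whence $\beta = \pm 1$; for $k \ge 2$, the right-hand side has $v_\infty = -1$, which cannot equal $v_\infty(\beta^2) = 2v_\infty(\beta)$, a contradiction.

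Finally, applying the key claim to $\beta = f^{M-2}(\alpha) \in \calK$, we find that $\alpha$ fails to realize $(M,2)$ precisely when $f^{M-2}(\alpha) = \pm 1$. For $M = 2$ this collapses to $\alpha = \pm 1$, giving the claimed exception. For $M \ge 3$, setting $\gamma := f^{M-3}(\alpha) \in \calK$, the condition becomes $\gamma^2 + t = \pm 1$, i.e., $\gamma^2 = \pm 1 - t$; but $\pm 1 - t$ is a nonzero linear polynomial in $t$ and therefore not a square in $\calK$, so no such $\alpha$ exists, and every $\alpha \in \calK$ realizes $(M,2)$.

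The main obstacle is the key claim, and specifically its $\delta = \infty$ branch: this is the only step where we need the detailed structure of the orbit $\calO_{\phi_2}(\infty)$ at $\frakp_\infty$ supplied by Lemma~\ref{lem:(1,2)poles}. Everything else reduces to the parity of $v_\infty$ for squares in $\calK$ or to the elementary observation that a nonzero linear polynomial in $t$ cannot be a square.
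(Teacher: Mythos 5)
Your proposal is correct and follows essentially the same route as the paper: reduce to $M=1$ via Lemma~\ref{lem:reduction}, invoke Proposition~\ref{prop:(1,2)}, use Lemma~\ref{lem:hhat_ineq} to force a pole at $\frakp_\infty$, and then use Lemma~\ref{lem:(1,2)poles} together with the parity of $v_\infty$ on squares to pin down $k=1$ and $\beta = \pm 1$, finally observing that $\pm 1 - t$ is not a square in $\calK$ to rule out $M \ge 3$. The only cosmetic difference is that you package the $M \ge 2$ analysis as a self-contained claim about $f(\beta)$ and dispose of $k=0$ by a direct valuation comparison rather than by the "only preimage of $\infty$ is $\infty$" remark, but the tools and the structure of the argument are the same as the paper's.
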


\begin{proof}
The $M = 0$ and $M = 1$ cases follow from Propositions~\ref{prop:periodic} and \ref{prop:(1,2)}, respectively. We therefore assume $M \ge 2$.

That $\pm 1$ do not realize portrait $(2,2)$ for $f$ follows from Theorem~\ref{thm:old}. Now suppose that $\alpha$ does not realize portrait $(M,2)$ for $f$; equivalently, suppose that $f^{M-1}(\alpha)$ does not realize portrait $(1,2)$ for $f$. By Proposition~\ref{prop:(1,2)}, we have $f^{M-1}(\alpha) = \phi_2^k(\delta)$ for some $k \ge 0$ and $\delta \in \{0,1/2,\infty\}$. Lemma~\ref{lem:hhat_ineq} asserts that any point in the image of $f$ must have a pole at $\frakp_\infty$; since points in the orbits of 0 and $1/2$ under $\phi_2$ do not have poles at $\frakp_\infty$ by Lemma~\ref{lem:(1,2)poles}, we must have $f^{M-1}(\alpha) = \phi_2^k(\infty)$ for some $k \ge 0$. Since the only preimage of $\infty$ is $\infty$ itself, we must have $k \ge 1$.

Set $\beta := f^{M-2}(\alpha)$. Then $f(\beta) = \phi_2^k(\infty)$, so by Lemma~\ref{lem:(1,2)poles} we have that
	\[
		k\phi_2^k(\infty) - t = kf(\beta) - t = k\beta^2 + (k - 1)t
	\]
is regular at $\frakp_\infty$. This implies that $k = 1$, so $f^{M-1}(\alpha) = \phi_2(\infty) = t + 1$. The preimages of $t + 1$ under $f$ are $\pm 1$, and the preimages of $\pm 1$ lie outside of $\calK$. Therefore, if $f^{M-1}(\alpha) = t + 1$ for some $M \ge 2$ and $\alpha \in \calK$, then $M = 2$ and $\alpha = \pm 1$, as claimed.
\end{proof}

As mentioned at the beginning of this section, the proof of the following statement --- the $d = 2$, $N = 1$ case of the main theorem --- uses the same ideas as for Proposition~\ref{prop:maind2N2}, and we therefore omit the proof.

\begin{prop}\label{prop:maind2N1}
Let $\alpha \in \calK$, and let $M \ge 0$. Then $\alpha$ fails to realize portrait $(M,1)$ for $f_2$ if and only if $M = 1$ and $\alpha \in \calO_{\phi_1}(0) \cup \calO_{\phi_1}(\infty)$, where
	\[
	\phi_1(z) = -\frac{t(z + 1)}{z - (t - 1)}.
	\]
Moreover, for each $k \ge 0$, we have $h(\phi_1^k(0)) = h(\phi_1^k(\infty)) = k$.
\end{prop}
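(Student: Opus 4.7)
The plan is to mirror the proof of Proposition~\ref{prop:maind2N2} step by step. The $M = 0$ case is immediate from Proposition~\ref{prop:periodic}, since $(0,1,2)$ is not among the listed exceptions, and the $M \ge 2$ case is reduced via Lemma~\ref{lem:reduction} to showing that for $\alpha \in \calK$ and $M \ge 2$, the point $f^{M-1}(\alpha)$ cannot lie in $\calO_{\phi_1}(0) \cup \calO_{\phi_1}(\infty)$. The substantive work is therefore in the $M = 1$ case, which I would organize around a property $(*)$ analogous to the one used for $N = 2$.

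For $M = 1$, Corollary~\ref{cor:fail} together with the observation that $N = 1$ has no proper divisors shows that $\alpha$ fails to realize portrait $(1,1)$ if and only if every zero of $\Phi_1(-\alpha,t) = \alpha^2 + \alpha + t$ is also a zero of $\alpha$. At such a place $\frakp_c$, evaluation forces $c = 0$, so the characterizing property becomes
\[
(*) \quad \alpha \text{ vanishes at } \frakp_0, \text{ and } \Phi_1(-\alpha,t) \text{ vanishes only at } \frakp_0.
\]
The key algebraic input, verified by direct substitution, is the identity
\[
\Phi_1(-\phi_1(\alpha),t) = \frac{t^2\,\Phi_1(-\alpha,t)}{(\alpha - (t-1))^2},
\]
which together with the obvious $v_0(\phi_1(\alpha)) \ge 1$ shows that $\phi_1$ preserves $(*)$. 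A telescoping computation on the poles of $1/\phi_1(\alpha) = -(\alpha - (t-1))/(t(\alpha + 1))$ then gives $h(\phi_1(\alpha)) = h(\alpha) + 1$, exactly as in Lemma~\ref{lem:phi(*)}. The parallel analysis for $\phi_1^{-1}(\alpha) = ((t-1)\alpha - t)/(\alpha + t)$ yields an analog of Lemma~\ref{lem:phi-1(*)}: provided $\alpha \notin \{0, -t\}$ (the points sent by $\phi_1^{-1}$ either to the non-$(*)$-satisfying constant $-1$ or to $\infty \notin \calK$), the preimage $\phi_1^{-1}(\alpha)$ lies in $\calK$ and satisfies $(*)$ with height one smaller. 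Induction on $h(\alpha)$ --- base case $h(\alpha) = 0$ supplied by Theorem~\ref{thm:old}, which forces $\alpha = 0 \in \calO_{\phi_1}(0)$ --- then yields $\alpha \in \calO_{\phi_1}(0) \cup \calO_{\phi_1}(\infty)$, while the converse and the height formula $h(\phi_1^k(\delta)) = k$ follow by forward induction.

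For $M \ge 2$, I would follow the endgame of Proposition~\ref{prop:maind2N2}. Passing to the local coordinate $s = 1/t$ shows that $\phi_1$ reduces to $z \mapsto z + 1$ modulo $\frakp_\infty$, so $v_\infty(\phi_1^k(0)) \ge 0$ for all $k \ge 0$. Since Lemma~\ref{lem:hhat_ineq} gives $v_\infty(f^{M-1}(\alpha)) < 0$ for $M \ge 2$, the $\delta = 0$ branch is impossible. For $\delta = \infty$, writing $\phi_1^k(\infty) = a_k t + O(1)$ at $\frakp_\infty$ and iterating the recursion $a_{k+1} = -a_k/(a_k - 1)$ with $a_1 = -1$ gives $a_k = -1/k$, so $v_\infty(k\phi_1^k(\infty) + t) \ge 0$. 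If $f^{M-1}(\alpha) = \phi_1^k(\infty)$ for some $k \ge 1$, then setting $\beta := f^{M-2}(\alpha)$ yields
\[
k\beta^2 = \bigl(k\phi_1^k(\infty) + t\bigr) - (k+1)t,
\]
whose $\frakp_\infty$-valuation is $-1$, contradicting the parity of $v_\infty(\beta^2)$; the case $k = 0$ is trivial since $\infty \notin \calK$.

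The main obstacle I foresee is the bookkeeping in the analog of Lemma~\ref{lem:phi-1(*)}(A): verifying that $\phi_1^{-1}$ preserves $(*)$ requires precise control of the orders of vanishing of $\alpha - (t-1)$ and $(t-1)\alpha - t$ at $\frakp_0$ for $\alpha$ satisfying $(*)$, mirroring the estimates at $\frakp_{-1}$ for $\alpha - (t+1)$ and $t\alpha + (t+1)$ from the $N = 2$ case. These local computations are routine but must be done carefully to identify exactly the exceptional values $\alpha \in \{0, -t\}$ that must be excluded; once they are in place, the rest of the argument transfers essentially verbatim from the $N = 2$ setting.
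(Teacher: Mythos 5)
Your proposal is correct and follows the same route the paper itself prescribes: the paper explicitly omits this proof, stating it ``uses essentially the same technique'' as Proposition~\ref{prop:maind2N2}, and your outline reproduces that technique. The characterization of failure via the property $(*)$ (vanishing of $\alpha$ at $\frakp_0$, with $\Phi_1(-\alpha,t)$ vanishing only there) is the right $N=1$ analogue, and the absence of proper divisors of $1$ correctly simplifies the $\frakp_{-3/4}$ complication out of the picture. The key identity $\Phi_1(-\phi_1(\alpha),t) = t^2\Phi_1(-\alpha,t)/(\alpha-(t-1))^2$ checks out (and one also has the companion identity $\Phi_1(-\phi_1^{-1}(\alpha),t) = t^2\Phi_1(-\alpha,t)/(\alpha+t)^2$), your exclusion set $\{0,-t\}$ is precisely $\{0,\phi_1(\infty)\}$ as expected, and the $M\ge 2$ endgame via $v_\infty(k\phi_1^k(\infty)+t)\ge 0$ together with the parity obstruction $v_\infty(\beta^2)=-1$ correctly yields no $M\ge 2$ exceptions, consistent with the statement.

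One small slip in your final paragraph: the denominator whose $\frakp_0$-valuation you must control in the $\phi_1^{-1}$ step is $\alpha + t$ (the denominator of $\phi_1^{-1}(\alpha)=((t-1)\alpha-t)/(\alpha+t)$), not $\alpha-(t-1)$, which is the denominator of $\phi_1(\alpha)$. The analogue of Lemma~\ref{lem:phi-1(*)}(A) should read $v_0(\alpha)=1$, $v_0(\alpha+t)=2$, and $v_0((t-1)\alpha-t)\ge 3$, proved from $v_0(\Phi_1(-\alpha,t))\ge 3$ (which holds once $\alpha\notin\{0,-t\}$ forces $\hhat_f(\alpha)\ge 3/2$) via the decompositions $\alpha=-t-\alpha^2+\Phi_1(-\alpha,t)$, $\alpha+t=-\alpha^2+\Phi_1(-\alpha,t)$, and $(t-1)\alpha-t=\alpha(\alpha+t)-\Phi_1(-\alpha,t)$. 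With that correction the bookkeeping closes exactly as in the $N=2$ case.
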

Proposition~\ref{prop:maind2N1} is the final case of Theorem~\ref{thm:main}. Therefore, by combining Propositions~\ref{prop:maind2N2} and \ref{prop:maind2N1} with the results of the previous sections, we have proven the main theorem.

\bibliography{C:/Dropbox/jdoyle}

\bibliographystyle{amsplain}

\end{document}